\theoremstyle{plain}
\newtheorem{theorem}{Theorem}[section]
\newtheorem{proposition}[theorem]{Proposition}
\newtheorem{lemma}[theorem]{Lemma}
\newtheorem{corollary}[theorem]{Corollary}
\newtheorem{assumption}{Assumption}
\numberwithin{equation}{section}
\theoremstyle{definition}
\newtheorem{definition}[theorem]{Definition}
\newtheorem{conjecture}[theorem]{Conjecture}
\theoremstyle{remark}
\newtheorem{remark}[theorem]{Remark}
\DeclareMathOperator{\Aut}{\mathrm{Aut}}
\newcommand{\id}{\mathrm{id}}
\newcommand{\bP}{\mathbb{P}}
\newcommand{\bR}{\mathbb{R}}
\newcommand{\bQ}{\mathbb{Q}}
\newcommand{\bZ}{\mathbb{Z}}
\newcommand{\bF}{\mathbb{F}}
\newcommand{\bC}{\mathbb{C}}
\newcommand{\calC}{\mathcal{C}}
\newcommand{\calK}{\mathcal{K}}
\newcommand{\calH}{\mathcal{H}}
\newcommand{\calO}{\mathcal{O}}
\newcommand{\calL}{\mathcal{L}}
\newcommand{\rank}{\mathrm{rank}}
\newcommand{\git}{/\kern-0.2em/}
\newcommand{\tH}{\widetilde{\mathrm{H}}}
\newcommand{\AXprim}{A(X)_{prim}}
\newcommand{\Aprim}{A_{prim}}
\newcommand{\Ku}{\mathrm{Ku}}
\begin{document}

\title[Fourier--Mukai partners]{Counting Fourier--Mukai partners of Cubic fourfolds}

\author[B\"ohning]{Christian B\"ohning}\thanks{For the purpose of open access, the authors have applied a Creative Commons Attribution (CC BY) licence to any Author Accepted Manuscript version arising from this submission.}
\address{Christian B\"ohning, Mathematics Institute, University of Warwick\\
Coventry CV4 7AL, England}
\email{C.Boehning@warwick.ac.uk}

\author[von Bothmer]{Hans-Christian Graf von Bothmer}
\address{Hans-Christian Graf von Bothmer, Fachbereich Mathematik der Universit\"at Hamburg\\
Bundesstra\ss e 55\\
20146 Hamburg, Germany}
\email{hans.christian.v.bothmer@uni-hamburg.de}

\author[Marquand]{Lisa Marquand}
\address{Lisa Marquand, Courant Institute of Mathematical Sciences, New York University\\
251 Mercer Street\\
NY 10012, USA}
\email{lisa.marquand@nyu.edu}

\date{\today}


\begin{abstract}
We develop an algorithm to count the number of virtual Fourier--Mukai partners for a given cubic fourfold, with initial input the primitive algebraic lattice and transcendental Hodge structure. Under some mild assumptions, we prove that our virtual count is enough to recover the actual count of Fourier--Mukai partners. We apply our algorithm to examples of cubics with a symplectic automorphism. In particular, we prove that a general cubic with a symplectic involution has 1120 non-trivial Fourier--Mukai partners, each admitting an Eckardt involution, and all birational. As a corollary, we prove that admitting a symplectic automorphism is not a Fourier--Mukai invariant for cubic fourfolds.
\end{abstract}

\maketitle 

\section{Introduction}\label{s:Intro}
The derived category of coherent sheaves of a smooth, complex cubic hypersurface $X\subset \bP^5$ contains a full triangulated subcategory, often called the Kuznetsov component of $X$:
$$\Ku(X):=\langle \calO_X, \calO_X(1), \calO_X(2)\rangle^\perp\subset D^b(X).$$
It is widely conjectured that the Kuznetsov component captures the birational geometry of $X$. Indeed, for some special cubic fourfolds, $\Ku(X)$ is equivalent to the derived category of a $K3$ surface, and Kuznetsov conjectured that this is a necessary and sufficient condition for the cubic fourfold to be rational \cite[Conjecture 1.1]{kuzcubic}. A different, but related, conjecture is the following:

\begin{conjecture}\label{Huy conj}\cite[Conjecture 2.5]{HuybrechtsK3Update}
    Let $X$ and $Y$ be two cubic fourfolds. If $X$ and $Y$ are Fourier--Mukai partners, i.e., if there exists an equivalence $\Ku(X)\simeq \Ku(Y),$ then $X$ and $Y$ are birational.
\end{conjecture}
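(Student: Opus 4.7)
Since the final statement is an open conjecture, I can only sketch a program rather than a complete argument; the present paper itself establishes the conjecture in one family of cubics with a symplectic involution via an explicit count of FM partners, and what follows is the kind of strategy one might hope to complete in general.

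The plan is to factor the argument through hyperk\"ahler geometry in three stages. First, by Addington--Thomas together with Bayer--Lahoz--Macr\`i--Nuer--Perry--Stellari, any equivalence $\Phi\colon\Ku(X)\xrightarrow{\sim}\Ku(Y)$ induces an orientation-preserving Hodge isometry $\tH(\Ku(X),\bZ)\cong\tH(\Ku(Y),\bZ)$ of Mukai lattices, and in particular a Hodge isometry of the transcendental sublattices $T(X)\cong T(Y)$. This reduces the purely categorical input to a lattice-and-Hodge statement that can actually be manipulated.

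Second, I would pass to moduli of Bridgeland-stable objects in $\Ku(X)$ and $\Ku(Y)$. The results of Bayer--Lahoz--Macr\`i--Stellari realize both the Fano variety of lines $F(X)$ and the LLSvS eightfold $Z(X)$ as such moduli, and any equivalence $\Phi$ matches stability conditions up to the autoequivalence group action. Via the global Torelli theorem of Verbitsky--Markman, one thereby obtains birational maps $F(X)\dashrightarrow F(Y)$ and $Z(X)\dashrightarrow Z(Y)$ of hyperk\"ahler manifolds of $K3^{[2]}$- and OG10-type respectively; orientation-preservation in the first step is what rules out the Hodge-isometric but non-birational twin and so is essential here.

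Third, and this is the main obstacle, one must descend from birationality of the hyperk\"ahler moduli back to birationality of the cubics themselves. The cubic $X$ is encoded in $F(X)$ only through auxiliary structures---the incidence correspondence of meeting lines, the Pl\"ucker polarization, the universal line---none of which is intrinsic to the hyperk\"ahler data, so $F(X)\dashrightarrow F(Y)$ need not, a priori, induce any map between $X$ and $Y$. A natural route is via the Voisin-type embedding $X\dashrightarrow Z(X)$ sending a point to an associated twisted cubic: one hopes that the induced $Z(X)\dashrightarrow Z(Y)$ restricts to a birational equivalence on the image of $X$, and hence produces $X\dashrightarrow Y$. Controlling the indeterminacy locus of the birational map of eightfolds relative to this subvariety is precisely the hard point, and it is currently beyond reach in general; the present paper therefore sidesteps the obstacle in its chosen setting by enumerating all FM partners via the lattice-theoretic algorithm developed here and constructing each birational equivalence by hand.
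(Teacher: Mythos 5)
You are right that this statement is an open conjecture: the paper does not prove it, but quotes it from Huybrechts and only \emph{verifies} it for its two example families, so there is no proof in the paper against which your program can be compared step by step. Your proposal is therefore not a proof and cannot be one, and you say so honestly; judged as a program, it is the standard one, but two technical points deserve correction. First, your emphasis on orientation is misplaced: as the paper notes after Conjecture \ref{conj:HodgeEqualsDerived}, by \cite[Lemma 2.3]{Huy17} every smooth cubic fourfold admits an orientation-reversing self-isometry of $\tH(X)$, so any Hodge isometry can be adjusted to an orientation-preserving one; orientation is not what separates birational from non-birational partners here. Second, in your middle stage an equivalence $\Ku(X)\simeq\Ku(Y)$ carries the Mukai vector whose moduli space is $F(X)$ to \emph{some} vector of the same square in $\tH(\Ku(Y),\bZ)$, which need not be the vector defining $F(Y)$; getting a birational map $F(X)\dashrightarrow F(Y)$ (or $Z(X)\dashrightarrow Z(Y)$) requires an extra argument about the orbit of such vectors under autoequivalences and the Markman--Verbitsky Torelli theorem. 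Your third stage is, as you correctly identify, the genuinely open obstruction.

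It is also worth recording that the paper's verifications in its examples do not follow your hyperk\"ahler route at all. For symplectic involutions, birationality of the $1120$ partners is obtained from the explicit Gale-duality construction: Proposition \ref{p:DualInvolutionCubics} a) cites \cite{BBM25}, where each partner is built by hand together with a birational map to $X$ (see also Theorem \ref{tExplicitConstruction}). For the order-$3$ family, the paper instead shows that $X$ and every partner $Y$ are \emph{rational}, citing \cite{RSTrisecantFlops} and \cite{BGM25}, so all partners are birational to each other trivially through $\bP^4$. Neither route passes through moduli of Bridgeland-stable objects or birational maps of hyperk\"ahler manifolds. So the accurate summary is: the statement remains open, your program is a reasonable sketch of the expected general strategy with the two caveats above, and the paper's actual contribution to the conjecture is the enumeration of all Fourier--Mukai partners in its examples plus case-by-case birationality established by explicit construction or by rationality of both sides.
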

There are few known examples of pairs $X, Y$ that satisfy the above conjecture: the first example was given in \cite{FanLaiCremona}, who proved that every cubic $X$ containing a Veronese surface had a unique non trivial Fourier--Mukai partner $Y$ (also containing a Veronese), and the pair are indeed birational. Further examples are given in \cite{Brooke:2024aa} - most notably, the authors proved that for a cubic fourfold $X$ containing a non-syzygetic pair of cubic scrolls, there is a non-trivial Fourier--Mukai partner $Y$ (also non-syzygetic) birational to $X$. This partner $Y$ was explicitly constructed from $X$ in \cite{BBM25}.

In order to study this conjecture, one would like a greater source of pairs of Fourier--Mukai partner cubic fourfolds. For a cubic fourfold $X$ whose algebraic lattice $A(X)=H^{2,2}(X)\cap H^4(X,\bZ)$ has rank 2, i.e., a very general special cubic fourfold, counting formulas for the number of Fourier--Mukai partners were established by Fan--Lai in \cite{FanLai23}, generalising special cases of formulas in \cite{FanLai23}, \cite{pert21}, and building on work by Oguiso, who studied the analogous question for very general polarised $K3$ surfaces \cite{Og02}. Formulas for the number of Fourier--Mukai partners of any $K3$ surfaces were established in \cite{HLO04}, in terms of certain double cosets obtained from orthogonal groups of the Néron-Severi lattice. A Fourier--Mukai equivalence between $K3$ surfaces induces a Hodge isometry between the transcendental cohomology $T$ - their formula is based on counting certain possible primitive embeddings of this lattice $T$ into the $K3$ lattice, using Nikulin's theory \cite{nikulin}. 
Similarly, a Fourier--Mukai equivalence for cubic fourfolds also induces a Hodge isometry between the transcendental cohomology, but now the situation becomes more complex. 
First, the lattice $\Lambda\cong H^4(X,\bZ)_{prim}$ is no longer unimodular. 
Second, certain embeddings of $T(X)$ into $\Lambda$ do not correspond to smooth cubic fourfolds: in other words,  $T(X)^\perp$ may not correspond to the algebraic primitive cohomology of a smooth cubic fourfold.
Hence a fully closed form formula is out of reach.

In this paper, we establish an algorithm to calculate the number of Fourier--Mukai partners for a cubic fourfold, under some mild assumptions. 
We assume at the outset one has a cubic fourfold with specified algebraic primitive cohomology $A(X)_{prim}:=H^{2,2}(X)\cap H^4(X,\bZ)_{prim}$ and transcendental cohomology $T(X):= A(X)_{prim}^\perp.$ 
A Fourier--Mukai equivalence between cubic fourfolds induces a Hodge isometry between the Addington-Thomas lattices \cite{AT14}, restricting to one of the transcendental cohomology, but
it is possible that the partner cubic has non-isomorphic primitive algebraic lattice.
We start by identifying possible lattices that occur as orthogonal complements of $T(X)$ for some primitive embedding $T(X)\hookrightarrow \Lambda.$ For each such lattice $K$, we develop a procedure for counting so-called \textbf{virtual Fourier--Mukai partners} with algebraic lattice $K$; these are isomorphism classes of overlattices $K\oplus T\subset L$ with induced Hodge structure.
Our count is in terms of the orthogonal group of potential primitive algebraic lattices $K$, as one would expect from the counting formula of \cite{HLO04}.

However, the number of virtual Fourier--Mukai partners is sometimes much larger than the actual count of Fourier--Mukai partners.
Indeed, for $L$ as above, there exists a cubic fourfold $Y$ with a Hodge isomorphism $H^4(Y,\bZ)_{prim}\cong L$ if and only if there does not exist $v\in K$ with $v^2=2$, or $v^2=6$ and divisibility in $L$ equal to 3 \cite[Thm. 1.1]{laz10}. 
Unfortunately, for a fixed $K$ some overlattices $L$ will satisfy this condition, whereas others will not (see Section \ref{s:SymplecticOrder3} for explicit example). 
Fortunately, under mild assumptions we can correct the virtual count to give an actual count, by extracting the appropriate $L$.

We then apply our algorithm in examples, illustrating that our assumptions are indeed mild (i.e. satisfied in many situations of geometric interest), and how to extract the final count of Fourier--Mukai partners from the virtual count. The main source of geometrically interesting cubic fourfolds comes from those admitting symplectic automorphisms - we focus on those of order 2 and 3. Recall that a cubic fourfold $X$ with a symplectic involution $\phi$ contains 120 pairs of cubic scrolls and has $A(X)_{prim}\cong E_8(2)$ \cite[Theorem 1.1]{Mar23}. We show:

\begin{theorem}\label{t:invintro}
    Let $X$ be a general cubic fourfold with a symplectic involution $\phi.$ Then $X$ has $1120$ nontrivial Fourier--Mukai partners $Y$, each with $A(Y)_{prim}=E_6(2)\oplus A_2(2).$ Further, each $Y$ is birational to $X$.
\end{theorem}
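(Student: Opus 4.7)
The plan is a direct application of the virtual-count algorithm developed in the paper, with input $A(X)_{prim} \cong E_8(2)$ and $T(X) := E_8(2)^{\perp} \subset \Lambda$, as established in \cite{Mar23}. First, I would enumerate the isomorphism classes of lattices $K$ arising as $T(X)^{\perp}$ for some primitive embedding $T(X) \hookrightarrow \Lambda$. Since $E_8(2)$ has $2$-elementary discriminant, the transcendental form inherits a rigid discriminant structure, and Nikulin's gluing theorem reduces the enumeration to a finite, explicit genus-theoretic calculation; I would expect only $E_8(2)$ itself (contributing only $X$) and $E_6(2) \oplus A_2(2)$ to arise as candidate algebraic lattices for nontrivial partners.

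Next, I would apply the virtual Fourier--Mukai partner count to $K = E_6(2) \oplus A_2(2)$: enumerate overlattices $K \oplus T(X) \hookrightarrow L$ with compatible Hodge structure, modulo the action of $O(K)$ on the glue group. This reduces to a finite double-coset computation requiring $|O(K)|$ explicitly together with its action on the discriminant form; the arithmetic can be carried out either by hand or computationally, and should yield $1120$. Then I would apply Laza's Hodge-theoretic criterion \cite[Thm. 1.1]{laz10}: since $E_6(2) \oplus A_2(2)$ has minimum $4$, no $v \in K$ satisfies $v^2 = 2$, so the only condition to check is the absence of $v$ with $v^2 = 6$ and divisibility $3$ in $L$. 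If this condition holds uniformly across all $1120$ overlattices, the virtual count coincides with the actual count of nontrivial Fourier--Mukai partners.

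For the remaining assertions I would observe that $E_6(2) \oplus A_2(2)$ is the primitive algebraic lattice characteristic of cubic fourfolds admitting an Eckardt involution, so each partner $Y$ inherits such an involution (and in particular this shows that admitting a symplectic automorphism is not a Fourier--Mukai invariant). Birationality would then follow by combining the non-syzygetic cubic scroll construction of \cite{BBM25} (applied to the $120$ pairs of cubic scrolls in $X$ furnished by \cite{Mar23}) with the additional symmetries coming from the symplectic involution on $X$ and the Eckardt involutions on the $Y$, with careful bookkeeping recovering all $1120$ partners as birational models of $X$. The main obstacle I anticipate is the smoothness verification in Laza's criterion: it is \emph{a priori} possible that some of the $1120$ virtual overlattices $L$ contain a vector of square $6$ and divisibility $3$, in which case the virtual count would require correction; ruling this out demands delicate per-overlattice discriminant arithmetic, and the cleanest way to carry it out is probably to describe the full $O(K)$-orbit structure on the glue group and check the criterion orbit by orbit.
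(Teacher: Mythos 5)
Your proposal follows essentially the same route as the paper: determine $\mathscr{K}=\{E_8(2),\,E_6(2)\oplus A_2(2)\}$ via Nikulin's embedding theory, count glue data modulo $O(K)\times\{\pm\mathrm{id}_T\}$, invoke Laza's criterion, and obtain birationality from the construction of \cite{BBM25}. Three corrections are worth recording. First, your anticipated ``main obstacle'' is vacuous: every vector of $K=E_6(2)\oplus A_2(2)$ has square divisible by $4$, so $K$ contains no vectors of square $6$ whatsoever, and Lemma \ref{lem: no square 6} immediately gives $|HFM(X,K)|=|VFM(X,K)|$ with no per-overlattice or orbit-by-orbit check needed. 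Second, the structural origin of the number $1120$ should be named: modulo the full group $\overline{G}_{K,T}=O(D(K))\times\{\pm\mathrm{id}_{D(T)}\}$ there is exactly \emph{one} orbit of glue subgroups (Proposition \ref{p:CountCase2}); the multiplicity $1120$ arises solely because $\rho_K\colon O(K)\to O(D(K))$ fails to be surjective, with $|O(D(K))|/|\mathrm{im}\,\rho_K|=1393459200/1244160=1120$ (a Magma computation), so what is being counted are $G_{K,T}$-orbits rather than $\overline{G}_{K,T}$-orbits --- your phrase ``double-coset computation'' gestures at this but does not isolate it. Third, the lattice characteristic of Eckardt (type $\phi_1$) cubics is $E_6(2)$, not $E_6(2)\oplus A_2(2)$ (Theorem \ref{thm: inv}), and lattice recognition alone yields neither the involution on $Y$ nor the birational map; the paper instead constructs all $1120$ partners explicitly by a $\bZ/2$-equivariant Gale duality (Theorem \ref{tExplicitConstruction}), indexed by the $1120$ $A_2$-subsystems of $E_8$, and this explicit construction is also what upgrades the $1120$ \emph{Hodge-theoretic} partners produced by the counting algorithm to actual derived Fourier--Mukai partners via \cite[Theorem 7.2 c)]{BBM25} --- a step your proposal leaves implicit, since the counting algorithm a priori only controls Hodge-theoretic partnership.
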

Note that the lattice $E_8$ contains exactly 1120 $A_2$ root subsystems (see Lemma \ref{l:A2SubsysttemsE8}). Each subsystem corresponds to a different pair of nonsyzygetic cubic scrolls contained in $X$ - each Fourier--Mukai partner is then obtained via the construction of \cite{BBM25}.

We also apply our algorithm to the case of cubic fourfolds with a symplectic automorphism of order $3$ of type $\phi_3^6$ (see Theorem \ref{t:Z3} for notation). We prove:

\begin{theorem}\label{t:order3intro}
    Let $X$ be a general cubic fourfold with a symplectic automorphism of order $3$ of type $\phi_3^6$. Then $X$ has $623$ nontrivial Fourier--Mukai partners $Y$. Of these, 350 admit a symplectic automorphism of the same type, and the remaining 273 do not admit any automorphism. Further, each such $Y$ is birational to $X$.
\end{theorem}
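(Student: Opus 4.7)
The plan is to specialise the algorithm developed throughout the paper to the input data of a general cubic fourfold $X$ carrying a symplectic automorphism of order $3$ of type $\phi_3^6$, much as Theorem \ref{t:invintro} specialises it to the case of a symplectic involution. First, I would read off $A(X)_{prim}$ and $T(X)$ from Theorem \ref{t:Z3}, and verify that the mild assumptions required by the algorithm hold in this setting. Then I would enumerate the isometry classes of primitive sublattices $K \subset \Lambda$ with $K^\perp \cong T(X)$ via Nikulin's theory of discriminant forms; each such $K$ is a candidate for the primitive algebraic lattice of a Fourier--Mukai partner $Y$.

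For each candidate $K$, the virtual count is obtained by enumerating the isomorphism classes of overlattices $K \oplus T(X) \subset L \subset \Lambda$ compatible with the gluing of discriminant forms and preserving the Hodge structure on $T(X)$, expressed in terms of double cosets for $O(K)$ acting on isometries of discriminant groups. Next, apply the correction step using Laza's criterion \cite[Thm.~1.1]{laz10}: an overlattice $L$ corresponds to a smooth cubic fourfold exactly when it contains no vector $v$ with $v^2 = 2$ and no vector $v$ with $v^2 = 6$ of divisibility $3$ in $L$. Summing the surviving counts across all candidate $K$ should yield the asserted total of $623$ nontrivial Fourier--Mukai partners.

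To separate the $350$ partners admitting a symplectic automorphism of the same type from the remaining $273$, I would use that such an automorphism exists if and only if the $\mu_3$-action on $T(X)$ lifts to an isometry of $A(Y)_{prim}$ compatible with the gluing defining $L$: the $350$ are precisely the overlattices for which the candidate $K$ is isometric to $A(X)_{prim}$ and the gluing is equivariant. For the remaining $273$ one has $A(Y)_{prim} \not\cong A(X)_{prim}$, and an appeal to the classification of symplectic and non-symplectic automorphism actions on cubic fourfolds shows that a general such $Y$ admits no nontrivial automorphism. Birationality of each partner $Y$ to $X$ should then follow from a geometric construction analogous to the Cremona-type transformation of \cite{BBM25}, now built from the $A_2$-configuration associated with the $\phi_3^6$-action.

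The main technical obstacle is the lattice-theoretic bookkeeping: carrying out the double-coset enumeration for each candidate $K$ and verifying that Laza's forbidden short vectors eliminate exactly the right overlattices, so that the numbers $623$, $350$, and $273$ emerge rather than some nearby values. This will almost certainly require a mixture of hand computations for the discriminant forms and computer algebra for the final enumeration. A secondary subtlety is making the geometric birationality construction go through uniformly across all $623$ partners, including those with no remaining $\phi_3^6$-symmetry.
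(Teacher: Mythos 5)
Your lattice-theoretic skeleton matches the paper's: read off $A(X)_{prim}$ (the Coxeter--Todd lattice, $D\cong(\bZ/3)^6$) and $T(X)=U(3)^2\oplus A_2^3$ from Theorem \ref{t:Z3}, verify Assumptions \ref{assumption 1} and \ref{assumption 3}, enumerate the candidate complements (the paper finds exactly two, $A(X)_{prim}$ and $K_{mystery}$, by computer enumeration of the two relevant genera and discarding classes with square-$2$ vectors), count orbits of gluings, and apply the correction for square-$6$ vectors of divisibility $3$ (this is exactly the $VFM(X,K)'$ refinement of Lemma \ref{lem:VFM'}, which is what produces $273$ rather than the much larger virtual count, cf.\ Remark \ref{r:DifferenceVirtualActual}). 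Your identification of the $350$ as the partners with $K\cong A(X)_{prim}$ also agrees with the paper, which gets the automorphism on those $Y$ via the Strong Torelli theorem rather than any equivariance condition on the gluing.

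However, there are two genuine gaps. First, your plan never addresses the passage from \emph{Hodge-theoretic} Fourier--Mukai partners to \emph{actual} (derived) ones: this is Conjecture \ref{conj:HodgeGivesDerived} in general, so it must be proved in this example. The paper does this via Theorem \ref{t:TwistedK3s}: for the $A(X)_{prim}$ partners, the symplectic automorphism gives an associated twisted K3 by \cite{ouchi}; for the $K_{mystery}$ partners, one exhibits an explicit vector $v\in K_{mystery}$ with $v^2=14$ whose span with $h^2$ is primitive, so $Y\in\calC_{42}$ and again has an associated twisted K3. Without some such argument your count is only of Hodge-theoretic partners. Second, your birationality argument would fail: you propose a geometric Cremona-type construction ``analogous to \cite{BBM25}, built from the $A_2$-configuration,'' but the paper explicitly states that no such geometric construction is known for the partners with lattice $K_{mystery}$ (nor for any cubic with that lattice). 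Instead, birationality is obtained for free from \emph{rationality}: all $Y\in FM(X)$ lie in $\calC_{42}$ or carry the order-$3$ symplectic automorphism, hence are rational by \cite[Thm.~5.12]{RSTrisecantFlops} and \cite[Cor.~1.3]{BGM25}, and any two rational fourfolds are birational. You should replace the proposed construction by this rationality argument.
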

Indeed, there are 273 Fourier--Mukai partners that have different primitive algebraic lattice to that of $X$. However, in contrast to the case of involutions, we do not have a geometric explanation for the existence of the partners. Both $X$ and each Fourier--Mukai partner $Y$ can be shown to be rational, hence Conjecture \ref{Huy conj} holds in both cases.

It is worth highlighting that both examples prove the following corollary:
\begin{corollary}
    The existence of a symplectic automorphism is not preserved under Fourier--Mukai partnership of cubic fourfolds.
\end{corollary}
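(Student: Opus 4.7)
The plan is to deduce the corollary directly from the two main theorems already stated, since in each case the list of Fourier--Mukai partners produced contains a partner whose primitive algebraic lattice is incompatible with the existence of the relevant symplectic automorphism. So essentially the argument is a lattice-theoretic obstruction, carried out twice.

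First I would treat the involution case using Theorem \ref{t:invintro}. Take $X$ a general cubic fourfold with a symplectic involution; then $A(X)_{prim}\cong E_8(2)$ by \cite[Theorem 1.1]{Mar23}. Theorem \ref{t:invintro} produces $1120$ nontrivial Fourier--Mukai partners $Y$, each with $A(Y)_{prim}\cong E_6(2)\oplus A_2(2)$. If any such $Y$ admitted a symplectic involution, then by the same characterisation we would need $A(Y)_{prim}\cong E_8(2)$. Since $E_6(2)\oplus A_2(2)\not\cong E_8(2)$ as lattices (for instance, the discriminant groups or the minimal norms distinguish them), no such $Y$ carries a symplectic involution. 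This alone gives the corollary.

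For completeness I would also record the order $3$ case, invoking Theorem \ref{t:order3intro}. The theorem already asserts that among the $623$ nontrivial Fourier--Mukai partners of a general cubic fourfold with a symplectic automorphism of type $\phi_3^6$, precisely $273$ admit no nontrivial automorphism whatsoever; in particular, they admit no symplectic automorphism, which again proves the corollary (and shows the phenomenon is not specific to a single order).

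The only step requiring genuine work is the lattice non-isomorphism $E_8(2)\not\cong E_6(2)\oplus A_2(2)$ used above, but this is immediate from standard invariants (discriminant form or rank-preserving root count after rescaling by $\tfrac12$). Everything else is a direct citation of the theorems in the introduction.
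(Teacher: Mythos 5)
Your proposal is correct and is essentially the paper's own argument: the corollary is read off directly from the two main theorems, with the lattice mismatch ruling out a symplectic involution on the $1120$ partners (exactly the remark the paper makes in Section \ref{sec:inv}), and the $273$ automorphism-free partners in the order-$3$ case covering symplectic automorphisms of arbitrary order. One small slip in your parenthetical: minimal norms do \emph{not} distinguish $E_8(2)$ from $E_6(2)\oplus A_2(2)$ (both equal $4$), but the discriminant groups (of orders $2^8$ versus $2^8\cdot 3^2$) do, so the non-isomorphism, and hence the argument, stands.
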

In the case of symplectic involutions, this is direct contrast to $K3$ surfaces: the existence of a Nikulin involution is preserved under Fourier--Mukai partnership by \cite[Proposition 9.7]{HTinvK3}.

\subsection*{Outline} In Section \ref{s:Prelim} we recall the lattice theoretic results we will need, along with basic results about Fourier--Mukai partners of cubic fourfolds. In Section \ref{sec: the virtual count} we develop the notion of virtual Fourier--Mukai partners, and develop our algorithm for counting the number. In Section \ref{sec: actual count}, we prove various propositions that ensure our virtual count gives the correct count for Hodge theoretic Fourier--Mukai partners, and explain how to extract the correct count otherwise. In Section \ref{sec:inv}, we apply our algorithm to the case of symplectic involutions and prove Theorem \ref{t:invintro}. Finally, in Section \ref{s:SymplecticOrder3} we study cubic fourfolds with symplectic automorphism of order 3 and prove Theorem \ref{t:order3intro}.

\subsection*{Acknowledgements} We would like to thank Stevell Muller for valuable help and advice on primitive embeddings and working with OSCAR. 
Many of our computations were verified in both Magma and OSCAR,  \cite{Magma}, \cite{OSCAR}, \cite{OSCAR-book}.
L.~Marquand was supported by NSF grant DMS-2503390.

For the purpose of open access, the first author has applied a Creative Commons Attribution (CC-BY) licence to any Author Accepted Manuscript version arising from this submission.


\section{Preliminaries}\label{s:Prelim}
We recall the theory that we will need in order to develop our counting algorithm. In Section \ref{SubSec:LatticeTheory} we recall the basics of lattice theory and Nikulin's theory of overlattices. In Section \ref{Subsec:FiniteOrth} we recall results on automorphism groups of orthogonal vector spaces over $\bF_3.$ In Section \ref{SubSec:Cubic Fourfolds} we recall Hodge theory of cubic fourfolds, and in Section \ref{subsec: FM partners} the basics of Fourier--Mukai partners of a cubic fourfold.
\subsection{Lattice theory}\label{SubSec:LatticeTheory}

\subsubsection{Finite symmetric bilinear and quadratic forms}\label{SubSubSec:FiniteQuadratic}
We recall a few facts about finite symmetric bilinear forms and finite quadratic forms following \cite[\S 1, $2^{\circ}$]{nikulin}. Recall that if $D$ is a finite abelian group, a finite symmetric bilinear form is simply a symmetric bilinear form $b \colon D \times D \to \bQ/\bZ$, and a finite quadratic form is a map $q\colon D \to \bQ/2\bZ$ such that 
\begin{eqnarray*}
q(na) = n^2 q(a) \text{ for all } n\in \bZ \text{ and } a\in D, \\
q(a+a') - q(a) -q(a') \equiv 2 b(a,a') \, (\mathrm{mod}\, 2\bZ) , \quad a, a'\in D
\end{eqnarray*}
where $b\colon D\times D \to \bQ/\bZ$ is a finite symmetric bilinear form. Clearly, $q=\oplus_p q_p$ and $b=\oplus_pb_p$ where $q_p$ and $b_p$ are the restrictions of $q$ and $b$ to the $p$-component $D_p$ of the finite group $D$. 

Note that, writing $\bQ^{(p)}$ for the set of rational numbers that can be written in lowest terms with a denominator a power of $p$, we have a natural isomorphism
\[
i\colon \bQ /\bZ \simeq \bigoplus_p \bQ^{(p)}/\bZ .
\]
Indeed, the inverse $i^{-1}$ is induced by the natural inclusions $\bQ^{(p)} \hookrightarrow \bQ$. Applying the multiplication by $2$ map to both sides of the isomorphism $i$ yields an isomorphism
\[
i_1\colon \bQ /2\bZ \simeq \Bigl( \bigoplus_{p\neq 2}  2\bQ^{(p)}/2\bZ \Bigr) \oplus \bQ^{(2)}/2\bZ . 
\]
We then get, \cite[Prop. 1.2.3]{nikulin}, that $q_p$ maps $D_p$ to $2\bQ^{(p)}/2\bZ$ for $p\neq 2$, and $q_2$ maps $D_2$ to $\bQ^{(2)}/2\bZ$, whereas $b_p$ maps $D_p \times D_p$ to $\bQ^{(p)} / \bZ$ for all $p$ using the above isomorphisms. Moreover, if $p\neq 2$, then $b_p$ (contrary to what happens in general) \emph{determines} $q_p$ by the rule
\[
q_p(a) = m_2 \left( 2^{-1} b_p(a,a) \right)
\]
where $a\in D_p$ and $m_2 \colon \bQ^{(p)} /\bZ \to 2\bQ^{(p)} /2\bZ$ is the multiplication by $2$ isomorphism. Here $2^{-1} b_p(a,a)$ has to be representable by an element in $\bQ^{(p)}$, well defined up to addition of an element in $\bZ$, so to get such a representative, we need to pick a representative of $b_p(a,a)$ as a rational number with \emph{even} numerator and denominator a power of $p$ (in lowest terms), which is always possible because $p$ is odd (and the class of $2^{-1} b_p(a,a)$ in $\bQ^{(p)} /\bZ$ is then well-defined using this procedure).

\subsubsection{Gluing theory and overlattices}\label{SubSubSec:Gluing}

The count of Fourier--Mukai partners will eventually be reduced to a count of certain even overlattices of a given even lattice, and this will be accomplished using the theory in this paragraph. 

For the following compare \cite{nikulin} or \cite{DolgachevQuadratic}.
If $S$ is an even (non-degenerate) sublattice of an even (non-degenerate) lattice $S'$ of the same rank, then the group $H_{S, S'}=S'/S$ is a subgroup of the discriminant group $D(S)$ on which the quadratic form $q_S$ vanishes identically - an \emph{isotropic subgroup}. Conversely, given an isotropic subgroup $H$ of $D(S)$, its preimage $S'$ in $D(S)=S^*/S$ is a subgroup of $S^*$ on which the $\bQ$-valued symmetric bilinear form on $S^*$ induces the structure of an even non-degenerate lattice containing $S$ as a sublattice of the same rank. This gives a bijection
\[
S' \leftrightarrow H_{S',S}
\]
between the set of non-degenerate lattices containing $S$ as a sublattice of finite index and the set of isotropic subgroups of $D(S)$. Moreover, under this correspondence, $D(S')$ is identified with 
\[
H_{S, S'}^{\perp}/H_{S,S'}
\]
with $q_{S'}$ being the restriction of $q_S$ to $D(S')$ \cite[Prop. 1.4.1]{nikulin}, \cite[Lemma 1.4.4]{DolgachevQuadratic}. 

\medskip

In particular, the above reasoning applies when when we consider a primitive embedding of an even lattice $S$ into an even lattice $M$, with orthogonal complement $K$. Then $M$ is an overlattice of $S\oplus K$. Therefore, it is classified by the preceding by the isotropic subgroup 
\[
H_{M}:= M/(S\oplus K) \subset D(S)\oplus D(K).
\]
The fact that the embeddings of $S$ and $K$ into $M$ are primitive gives one piece of additional information: the projections $p_S\colon H_{M} \to D(S)$ and $p_K\colon H_{M} \to D(K)$ are embeddings (this is equivalent to the embeddings of $S$ and $K$ into $M$ being primitive). Denoting by $H_{M, S}$ and $H_{M, K}$ the images under the the two projections to $S$ and $K$, we see 
\[
\gamma_{S,K}^M = p_K\circ p_S^{-1}\colon H_{M,S} \to H_{K,S}
\]
is an isomorphism under which $q_K\circ \gamma_{S,K}^M =-q_S$ because $H_M$ is an isotropic subgroup. This discussion can be summarized in the following theorem:

\begin{theorem}\label{theorem:glueing}\cite[1.5.1]{nikulin}
    Let $S, K$ be even lattices. Then even   overlattices $M$ of $S\oplus K$ are in bijection with the following data:
    \begin{itemize}
        \item a subgroup $H_{M,S}\leq D(S)$,
        \item a subgroup $H_{M, K}\leq D(K)$,
        \item an anti-isometry $\alpha: H_{M, S}\rightarrow H_{M,K}$.
    \end{itemize}
    Further, $(D(M), q_M)\cong (\Gamma_\alpha^\perp/\Gamma_\alpha, (q_S\oplus q_K)|_{\Gamma_\alpha^\perp/\Gamma_\alpha})$, where $\Gamma_\alpha=M/(S\oplus K)\subset D(S)\oplus D(K)$ is the graph of $\alpha.$
\end{theorem}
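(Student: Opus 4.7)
My plan is to derive the theorem directly from the correspondence between finite-index even overlattices of a given even lattice $N$ and isotropic subgroups of $D(N)$ recalled in the preceding paragraph, specialized to $N = S \oplus K$, and then repackage an isotropic subgroup of $D(S) \oplus D(K)$ as the graph of an anti-isometry.

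\textbf{From overlattices to triples.} Given an even overlattice $M \supset S \oplus K$ in which both $S$ and $K$ are primitively embedded, the general correspondence identifies $M$ with the isotropic subgroup $H_M := M / (S \oplus K) \subset D(S \oplus K) = D(S) \oplus D(K)$. Let $p_S, p_K$ be the projections from $H_M$ to $D(S)$ and $D(K)$, and set $H_{M,S} := p_S(H_M)$, $H_{M,K} := p_K(H_M)$. I claim that both projections are injective: an element of $\ker p_S$ is the class of some $x \in M$ lying in $K^* \subset K_{\bQ}$; if $x \notin K$ this witnesses that $K$ is not saturated in $M$, contradicting primitivity. By symmetry $p_K$ is injective, so $\alpha := p_K \circ p_S^{-1} \colon H_{M,S} \to H_{M,K}$ is a well-defined isomorphism. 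Since $H_M$ is isotropic for $q_S \oplus q_K$, each $x \in H_{M,S}$ satisfies
\[
0 = (q_S \oplus q_K)(x, \alpha(x)) = q_S(x) + q_K(\alpha(x)),
\]
so $\alpha$ is an anti-isometry.

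\textbf{From triples to overlattices.} Given data $(H_{M,S}, H_{M,K}, \alpha)$ as in the statement, form the graph $\Gamma_\alpha \subset D(S) \oplus D(K)$. The anti-isometry condition $q_K \circ \alpha = -q_S|_{H_{M,S}}$ is exactly what is needed for $\Gamma_\alpha$ to be isotropic with respect to $q_S \oplus q_K$, by the same one-line computation. The general correspondence then associates to $\Gamma_\alpha$ a unique even overlattice $M \supset S \oplus K$, concretely obtained as the preimage of $\Gamma_\alpha$ in $(S \oplus K)^\ast = S^\ast \oplus K^\ast$. Both projections of $\Gamma_\alpha$ to $D(S), D(K)$ are injective (it is a graph), so $S$ and $K$ are primitive in $M$. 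The two constructions are manifestly mutually inverse, which gives the asserted bijection.

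\textbf{The discriminant form.} The formula
\[
\bigl(D(M), q_M\bigr) \cong \bigl(\Gamma_\alpha^\perp / \Gamma_\alpha,\ (q_S \oplus q_K)|_{\Gamma_\alpha^\perp / \Gamma_\alpha}\bigr)
\]
is nothing but \cite[Prop. 1.4.1]{nikulin} applied to the inclusion $S \oplus K \subset M$, with $H_M = \Gamma_\alpha$ in that notation: the discriminant group of the overlattice equals the quotient of the orthogonal of the isotropic subgroup by itself, with the induced form.

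I do not expect any serious obstacle — everything is a direct repackaging of the preceding material. The one point deserving care is the equivalence between primitivity of the embeddings $S, K \hookrightarrow M$ and injectivity of the two projections $p_S, p_K$, since this is what forces the subgroup $H_M$ to come from an honest graph and hence from an anti-isometry, rather than from a more general isotropic subgroup of $D(S) \oplus D(K)$.
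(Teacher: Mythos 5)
Your proof is correct and takes essentially the same route as the paper: the paper likewise derives the statement from the overlattice--isotropic-subgroup correspondence of \cite[Prop. 1.4.1]{nikulin} specialized to $S\oplus K$, identifying primitivity of the embeddings of $S$ and $K$ with injectivity of the two projections $p_S, p_K$, and reading off the anti-isometry from isotropy of $H_M$. Your explicit caveat that the bijection really concerns overlattices in which $S$ and $K$ are primitively embedded (equivalently, saturated) is exactly the point the paper's surrounding discussion also makes, even though the theorem statement itself leaves it implicit.
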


\subsubsection{Uniqueness and classification of primitive embeddings}\label{SubSubSec:Embeddings}

We will need results about primitive embeddings of one even lattice into another in two situations.

\begin{theorem}\cite[1.14.4]{nikulin}\label{theorem: unique embedding into AT lattice}
Let $T$ be an even lattice of signature $(t_+, t_{-})$ and let $H$ be an even unimodular lattice of signature $(h_+, h_{-})$. Then there exists a \emph{unique} primitive embedding of $T$ into $H$ provided the following hold. 

\begin{enumerate}
\item $h_+-t_+ > 0$ and $h_- - t_- >0$. 
\item 
$h_+ + h_- - (t_++t_-) \ge l(D(T)_p) + 2$ for all primes $p\neq 2$, where $l(D(T)_p)$ denotes the minimum number of generators of the abelian group $D(T)_p$, the $p$-part of $D(T)$. 
\item 
If $h_+ + h_- - (t_++t_-) = l(D(T)_2)$, then $q_T=u_+^{(2)}(2)\oplus q'$ or $q_T= v_+^{(2)}(2)\oplus q'$ where $u_+^{(2)}(2)$ and $v_+^{(2)}(2)$ are as in \cite[Prop. 1.8.1]{nikulin} and $q'$ some other quadratic form. 
\end{enumerate}
\end{theorem}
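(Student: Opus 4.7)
The plan is to reduce the statement to the gluing theorem (Theorem \ref{theorem:glueing}) combined with Nikulin's existence and uniqueness theorems for even lattices with a prescribed genus (i.e.\ prescribed signature and discriminant form). Concretely, a primitive embedding $T \hookrightarrow H$ into a unimodular lattice is equivalent, via the data of its orthogonal complement $K = T^\perp \subset H$ together with a gluing isomorphism, to the following: a choice of even lattice $K$ of signature $(h_+-t_+,\, h_--t_-)$ equipped with an anti-isometry $\alpha \colon (D(T), q_T) \xrightarrow{\sim} (D(K), -q_K)$, used to form $H$ as the overlattice of $T \oplus K$ obtained from the graph of $\alpha$. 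Indeed, $H$ is unimodular precisely when $\alpha$ is an anti-isometry of the full discriminant forms. So there are three things to control: (i) existence of such $K$, (ii) uniqueness of $K$ up to isometry, and (iii) transitivity of the $O(T)\times O(K)$-action on anti-isometries $\alpha$.

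First, I would address (i) and (ii) by applying Nikulin's existence and uniqueness criteria for an even lattice in a prescribed genus (\cite[Thm.~1.10.1 and Cor.~1.13.3]{nikulin}): given the target genus, i.e.\ signature $(h_+-t_+,\, h_--t_-)$ and quadratic form $-q_T$ on the discriminant group, the hypotheses (1)–(3) are exactly designed so that both the existence and uniqueness portions of Nikulin's criterion are satisfied. Condition (1) ensures the signature of $K$ is indefinite (so uniqueness in a genus reduces to discriminant data); condition (2) bounds the length of the odd $p$-parts of the discriminant group by $\operatorname{rank}(K)-2$, which is the Nikulin uniqueness condition at odd primes; and condition (3) handles the remaining case where the bound at $p=2$ is sharp, by ruling out the failure modes via the structure of $q_T$ at $2$.

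Next, for (iii), I would invoke the fact that for such a $K$ the natural homomorphism $O(K) \to O(D(K), q_K)$ is surjective. This is again a consequence of Nikulin's theory (\cite[Thm.~1.14.2]{nikulin}), valid in exactly the same regime as (ii), and it tells us that any two anti-isometries $\alpha, \alpha' \colon D(T) \to D(K)$ differ by composition with an automorphism of $(D(K), q_K)$ lifting to an element of $O(K)$. Therefore the overlattices corresponding to $\alpha$ and $\alpha'$ are isomorphic as lattices containing $T$ primitively, and one obtains a single primitive embedding $T \hookrightarrow H$ up to the action of $O(H)$ (which absorbs the ambiguity in the abstract isomorphism class). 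Combining (i)–(iii) proves both existence and uniqueness.

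The main obstacle, and also the technical heart of the argument, is verifying that conditions (1)–(3) match precisely with Nikulin's discriminant-form conditions in both the existence of $K$ and the surjectivity of $O(K)\to O(D(K))$; the bookkeeping for the $2$-adic part of $q_T$ (where $q_p$ is not determined by $b_p$, cf.\ Section \ref{SubSubSec:FiniteQuadratic}) is delicate and is precisely what condition (3) is arranged to handle. Aside from this, the proof is essentially a translation between primitive embeddings and gluing data, and then a direct application of Nikulin's genus theory to the complement lattice.
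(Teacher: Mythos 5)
The paper gives no proof of this statement---it is quoted verbatim from Nikulin \cite[Thm.~1.14.4]{nikulin}---so the relevant comparison is with Nikulin's own argument, which your proposal reconstructs faithfully: primitive embeddings of $T$ into a unimodular $H$ correspond to gluing data $(K,\alpha)$ with $q_K\circ\alpha=-q_T$, existence and uniqueness of $K$ in the prescribed genus follow from Nikulin's genus theory, and transitivity on the anti-isometries $\alpha$ follows from the surjectivity of $\mathrm{O}(K)\to\mathrm{O}(D(K))$ given by \cite[Thm.~1.14.2]{nikulin}. Your outline is correct and is essentially the same approach; the only point you gloss over is that existence of $K$ also requires the signature-mod-$8$ compatibility in Nikulin's existence theorem, which holds automatically here by Milgram's formula precisely because an even unimodular lattice of signature $(h_+,h_-)$ is assumed to exist.
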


\medskip

In the following we will, for an even lattice $S$, fix its invariants $(s_+, s_-, q_S)$, the signature $(s_+, s_-)$ and the finite quadratic form $q_S\colon D(S) \to \bQ/2\bZ$ (so in particular, we also fix the abelian group $D(S)$). By \cite[Corollary 1.9.4]{nikulin} fixing the invariants $(s_+, s_-, q_S)$ is \emph{equivalent} to fixing the genus of $S$. In other words, all lattices $S'$ with the same invariants are precisely the ones such that $S'\otimes\bZ_p \simeq S\otimes \bZ_p$ for all primes $p$ and $S' \otimes \bR \simeq S \otimes \bR$. Such $S'$ are finite in number and it is in principle possible to enumerate all isometry classes of these $S'$.  

\begin{theorem}\cite{nikulin}[1.15.1]\label{theorem: prim embedding}
Let $S$ be an even lattice with invariants $(s_+, s_-, q_S)$, and let $M$ be an even lattice with invariants $(m_+, m_-, q_M)$. We wish to understand all primitive embeddings of $S$ into $M$. Such an embedding is determined by the following data:
\[
(H_S, H_M, \gamma; K, \gamma_K)
\]
where: 
\begin{enumerate}
\item 
$H_S \subset D(S)$ and $H_M \subset D(M)$ are subgroups and $\gamma \colon (H_S, q_S|_{H_S})\rightarrow (H_M, q_M|_{H_M})$ is an isometric isomorphism of finite groups.
\item
For one of the choices in a), set $$\delta:=(q_S\oplus -q_M)|_{\Gamma_\gamma^\perp/\Gamma_\gamma}$$
where $\Gamma_\gamma$ is the graph of $\gamma$ in $D(S)\oplus D(M).$ Let $K$ be an even lattice with invariants
\[
(m_+-s_+, m_--s_-, -\delta ). 
\]
This amounts to enumerating all lattices $K$ in that genus. 
\item 
$\gamma_K:(D(K),q_K)\rightarrow (\Gamma_\gamma^\perp/\Gamma_\gamma, -\delta)$ is an isomorphism.
\end{enumerate}
\end{theorem}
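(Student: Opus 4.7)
The plan is to reduce the classification of primitive embeddings $S \hookrightarrow M$ to the gluing theorem (Theorem \ref{theorem:glueing}) by introducing the orthogonal complement. Given any primitive embedding $S \hookrightarrow M$, the first step is to set $K := S_M^{\perp}$, which is automatically primitive in $M$ with signature $(m_+-s_+, m_--s_-)$. The inclusion $S \oplus K \subset M$ has finite index, so by Theorem \ref{theorem:glueing} the lattice $M$ is encoded as an overlattice of $S \oplus K$ by an anti-isometry $\alpha \colon H_{M,S} \to H_{M,K}$ between subgroups of $D(S)$ and $D(K)$, together with the identification $(D(M), q_M) \cong (\Gamma_\alpha^\perp/\Gamma_\alpha, (q_S \oplus q_K)|_{\Gamma_\alpha^\perp/\Gamma_\alpha})$.

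The central step is to repackage this ``$K$-side'' gluing data into the ``$M$-side'' data $(H_S, H_M, \gamma)$ of the statement, since in the intended applications $D(M)$ is prescribed whereas $K$ is the unknown we wish to classify. Setting $H_S := H_{M,S}$, I would use the isomorphism $D(M) \cong \Gamma_\alpha^\perp/\Gamma_\alpha$ to extract a canonical isometric embedding $\gamma \colon H_S \hookrightarrow D(M)$ whose image is the desired $H_M$; the anti-isometry $\alpha$ and the isometry $\gamma$ encode the same gluing via the duality between isotropic subgroups of $D(S) \oplus D(K)$ and of $D(S) \oplus D(M)^{-}$. A direct computation with finite quadratic forms, using the formulas recalled in Section \ref{SubSubSec:FiniteQuadratic}, then shows that $D(K)$ admits the dual description $(D(K), q_K) \cong (\Gamma_\gamma^\perp/\Gamma_\gamma, -\delta)$, where the perp is taken in $D(S) \oplus D(M)$ with form $q_S \oplus (-q_M)$. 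The final datum $\gamma_K$ records the identification of a chosen representative lattice in this predicted genus with its abstract discriminant, and the classification of such $K$ in that genus is finite by \cite[Corollary 1.9.4]{nikulin}.

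Conversely, given the data $(H_S, H_M, \gamma; K, \gamma_K)$, the anti-isometry $\alpha$ between subgroups of $D(S)$ and $D(K)$ needed to invoke Theorem \ref{theorem:glueing} can be reconstructed from $\gamma_K$ and the duality above. Applying the theorem produces an even overlattice $M'$ of $S \oplus K$ into which $S$ embeds primitively, and tracking invariants shows $M' \cong M$ compatibly with the original embedding, yielding the desired bijection. The main obstacle I anticipate is precisely the duality argument linking the $K$-side and $M$-side formulations, in particular verifying the sign conventions and that the form induced on $\Gamma_\gamma^\perp/\Gamma_\gamma$ is exactly $-q_K$. This is not conceptually deep but requires careful bookkeeping with the $p$-local decompositions and the subtle difference between even and odd primes highlighted in Section \ref{SubSubSec:FiniteQuadratic}.
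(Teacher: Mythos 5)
The paper offers no proof of this statement at all: it is quoted from Nikulin \cite{nikulin} as a black box, so there is nothing internal to compare against and your proposal must be judged on its own merits. Its skeleton is the right one, and indeed the standard one: pass to the orthogonal complement $K = S^{\perp}\subset M$, encode $M$ as an overlattice of $S\oplus K$ via Theorem \ref{theorem:glueing}, and then convert this ``$K$-side'' gluing data into the ``$M$-side'' data $(H_S, H_M, \gamma)$. However, the one concrete recipe you give for that conversion is incorrect, and since both directions of your argument (including the converse) are deferred to exactly this ``duality'' step, this is a genuine gap and not mere bookkeeping.

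Specifically, you set $H_S := H_{M,S}$, where $H_{M,S} = p_S\bigl(M/(S\oplus K)\bigr)$ is the gluing subgroup, and you claim a canonical isometric embedding $\gamma\colon H_S \hookrightarrow D(M)$ with image $H_M$. Both claims fail. Under the identification $D(M)\cong \Gamma_\alpha^{\perp}/\Gamma_\alpha$, the graph $\Gamma_\alpha$, which projects isomorphically onto $H_{M,S}$, is precisely the subgroup being quotiented out, so the natural map from $H_{M,S}$ to $D(M)$ is zero. A clean counterexample is the very situation the paper discusses right after the theorem: if $M$ is unimodular (say $M=U$ with $S=\langle 2\rangle$ generated by $e+f$ and $K=\langle -2\rangle$ generated by $e-f$), then $D(M)=0$ forces $H_S=0$ in Nikulin's statement, whereas the gluing subgroup is $H_{M,S}=D(S)\cong \bZ/2\neq 0$ and cannot embed into $D(M)$ at all. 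The correct dictionary is that Nikulin's $H_S$ is the \emph{orthogonal complement} of $H_{M,S}$ with respect to the discriminant bilinear form on $D(S)$; concretely, $H_S = (M^*\cap S^*)/S$ (intersection inside $S^*\oplus K^*$), i.e.\ classes of functionals in $M^*$ vanishing on $K$, and $\gamma$ is induced by reduction modulo $M$, an isometry because $x^2$ is the same rational number whether $x$ is viewed in $S^*$ or in $M^*$. If you want to complete the proof along your lines, the cleanest route is: glue $M\oplus M(-1)$ along the identity of $D(M)$ to get an even unimodular lattice $L$ containing $M$ primitively; then $W := K^{\perp}\subset L$ is an overlattice of $S\oplus M(-1)$ with both summands primitive, and Theorem \ref{theorem:glueing} applied to $W$ produces exactly the isometry $\gamma\colon H_S\to H_M$ (an anti-isometry into $(D(M),-q_M)$ is an isometry into $(D(M),q_M)$) together with $(D(W),q_W)\cong (\Gamma_\gamma^{\perp}/\Gamma_\gamma, \delta)$, while the unimodular complement relation $(D(W),q_W)\cong (D(K),-q_K)$ supplies $\gamma_K$; running this construction backwards, using the uniqueness of indefinite even unimodular lattices, gives the converse.
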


Note that if $M$ happens to be unimodular, then $D(M)$ is trivial, so $H_M$ and $H_S$ are, too, and $\Gamma_{\gamma}^{\perp}/\Gamma_{\gamma}$ will be all of $D(S)$. Thus in this case, any even lattice $K$ in the genus specified by $(m_+-s_+, m_--s_-, -\delta)$ where $-\delta =-q_S\colon D(S) \to \bQ/2\bZ$ can be the orthogonal complement of a primitive embedding of $S$ into $M$ and all of these occur. 

In cases, when $D(M)$ is not trivial, classifying all possible such complements to an embedding of $S$ into $M$ amounts to double enumeration: first, we have to enumerate the possible $\gamma's$. For example, if $M=\Lambda$ with discriminant group $D(M)=\bZ/3$, then $H_M$ might be trivial (whence we have to enumerate all $K$ in the genus of $(m_+-s_+, m_--s_-, -\delta)$ where $-\delta =-q_S \oplus q_M\colon D(S)\oplus \bZ/3 \to \bQ/2\bZ$); or $H_M=\bZ/3$, whence the task consists in locating a subgroup $\bZ/3$ of $D(S)$ isometric to $D(M)$, choosing an isometry $\gamma$, and computing $\delta$ on $\Gamma_{\gamma}^{\perp}/\Gamma_{\gamma}$. 

\subsection{Finite orthogonal geometries and their automorphism groups}\label{Subsec:FiniteOrth}

For our lattice theoretic counts below we will need several results about orthogonal vector spaces over $\bF_3$ and their automorphism groups. We collect the relevant theory in this subsection. General references are \cite[Chapter VII]{Dickson}, \cite[Chapter VII.3]{AdemMilgram}, \cite{Taylor}, \cite{Dieudonne},  or \cite[Chapter 9]{Grove}. 

Let $K$ be a finite field of order $p^\nu$ where $p$ is an odd prime. Then 
\[
K^*/(K^*)^2 \simeq \bZ/2
\]
and there are $\frac{1}{2}(p^{\nu}-1)$ squares and the same number of non-squares in $K^*$. Consider a finite-dimensional vector space $V$ over $K$ with a quadratic form $q\colon V \to K$, or equivalently, a symmetric bilinear form $b \colon V \times V \to K$. For most of what follows we will assume that $V$ is \emph{regular}, i.e. that $q$ and $b$ are nondegenerate in the sense that $b\colon V \to V^*$ is an isomorphism. 

The \emph{determinant} of a regular quadratic space $V$ is the element $\det(V) \in K^*/(K^*)^2$ defined by the determinant of the Gram matrix of $b$ with respect to some basis. 

We use the standard notation
\[
\langle a_1, \dots , a_r \rangle
\]
for the diagonal quadratic form $a_1x_1^2 + \dots + a_rx_r^2$. If $1$ and $\epsilon$ represent the two elements of $K^*/(K^*)^2$, then $\langle 1, 1 \rangle \simeq \langle \epsilon, \epsilon \rangle$: this follows because by the pigeon-hole principle, every element of $K$ is the sum of two squares, and if $\alpha^2+\beta^2 =\epsilon$, the base change $x' = \alpha x + \beta y$, $y' = \beta x - \alpha y$ transforms $\langle 1, 1 \rangle$ into $\langle \epsilon, \epsilon \rangle$. Together with the standard diagonalization procedure for quadratic forms over fields of characteristic not $2$ this yields

\begin{proposition}\label{p:IsomClassesFiniteQuadratic}
For every dimension $n$ there are exactly two isometry classes of regular quadratic spaces $V$ over $K$:
\[
\langle 1, 1, \dots , 1 \rangle 
\]
and 
\[
\langle 1, 1, \dots , \epsilon \rangle 
\]
($n$ entries in both brackets). Two regular quadratic spaces over $K$ are isometric if and only if they have the same dimension and determinant. 
\end{proposition}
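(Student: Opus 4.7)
The plan is to reduce any regular quadratic form over $K$ to one of the two claimed normal forms, and then show that these two normal forms are non-isometric. First I would invoke the standard fact that over a field of characteristic not $2$, every regular quadratic space admits an orthogonal basis (Gram-Schmidt-style diagonalization); so we can write $V \cong \langle a_1, \dots, a_n \rangle$ for some $a_i \in K^*$. Since $K^*/(K^*)^2 \cong \bZ/2$ and scaling the $i$-th basis vector by $c \in K^*$ changes $a_i$ to $c^2 a_i$, each $a_i$ may be replaced by either $1$ or a chosen non-square $\epsilon$. Thus $V$ is isometric to $\langle 1, \dots, 1, \epsilon, \dots, \epsilon \rangle$ with some number $r$ of $\epsilon$'s.

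Next I would carry out the key reduction already flagged in the paper: $\langle 1, 1 \rangle \cong \langle \epsilon, \epsilon \rangle$. For this, I would establish the sum-of-two-squares lemma over $K$: the set $\{a^2 : a \in K\}$ has $(p^{\nu}+1)/2$ elements, and for any $c \in K$ the set $\{c-b^2 : b \in K\}$ also has $(p^{\nu}+1)/2$ elements; by pigeonhole these two subsets of $K$ must intersect, so $c = \alpha^2 + \beta^2$ is solvable. Applied to $c=\epsilon$, this yields $\alpha, \beta \in K$ with $\alpha^2 + \beta^2 = \epsilon$. Then the change of variables $x = \alpha u + \beta v$, $y = \beta u - \alpha v$ transforms $x^2 + y^2$ into $(\alpha^2+\beta^2)(u^2+v^2) = \epsilon u^2 + \epsilon v^2$, which establishes $\langle 1, 1 \rangle \cong \langle \epsilon, \epsilon \rangle$. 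Consequently, any pair of $\epsilon$'s in our diagonal form may be converted to a pair of $1$'s, so the number of $\epsilon$'s can be reduced modulo $2$. This leaves exactly two possibilities: $\langle 1, \dots, 1\rangle$ when $r$ is even, and $\langle 1, \dots, 1, \epsilon \rangle$ when $r$ is odd.

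Finally, to see these two normal forms are genuinely distinct isometry classes, I would use the determinant. The determinant of a regular quadratic space is a well-defined invariant in $K^*/(K^*)^2$, as any change of basis alters the Gram matrix determinant by a non-zero square. The forms $\langle 1, \dots, 1\rangle$ and $\langle 1, \dots, 1, \epsilon\rangle$ have determinants $1$ and $\epsilon$ respectively, which are distinct in $K^*/(K^*)^2$, so they cannot be isometric. This simultaneously proves both assertions of the proposition: there are exactly two isometry classes per dimension, and isometry is detected by (dimension, determinant).

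The only genuinely non-formal step is the sum-of-two-squares lemma, which depends on finiteness of $K$; everything else is a standard diagonalization and determinant calculation. Since the paper explicitly indicates the pigeonhole argument and the explicit change of variables, I do not anticipate significant obstacles, and the proof should be short.
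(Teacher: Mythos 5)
Your proof is correct and follows essentially the same route as the paper's argument: diagonalization, the pigeonhole sum-of-two-squares lemma giving $\langle 1,1\rangle \simeq \langle \epsilon,\epsilon\rangle$ via the explicit change of variables, and the determinant as the distinguishing invariant. You merely fill in the details (the counting in the pigeonhole step, the verification that the determinant is well defined in $K^*/(K^*)^2$) that the paper leaves implicit.
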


Note that if $n$ is odd, then 
\[
\langle 1, 1, \dots , \epsilon \rangle \simeq \langle \epsilon, \dots , \epsilon \rangle
\]
and the forms
\[
x_1^2 + \dots + x_n^2 \quad \mathrm{and}\quad  \epsilon (x_1^2 + \dots + x_n^2)
\]
have isometric automorphism groups. Thus if $n$ is odd, there is up to isomorphism only one orthogonal group of a regular quadratic space over $K$ whereas if $n$ is even, there are two such orthogonal groups. 

More precisely, following \cite[Chapter VII.3]{AdemMilgram}, we denote by $H_{2n}$ the sum of $n$ hyperbolic forms
\[
H_{2n}:=\left\langle \begin{pmatrix} 0 &1 \\ 1 & 0  \end{pmatrix} \right\rangle \perp \dots \perp \left\langle \begin{pmatrix} 0 &1 \\ 1 & 0  \end{pmatrix} \right\rangle 
\]
and the associated orthogonal group by $\operatorname{O}^+(2n, K)$. 
Moreover, consider the form
\[
H_{2n}':= \langle H_{2n-2} \rangle \perp \left\langle \begin{pmatrix} 1 & 0 \\ 0 & -\epsilon  \end{pmatrix} \right\rangle
\]
for $\epsilon$ as above a non-square in $K$, and the associated orthogonal group $\operatorname{O}^-(2n, K)$.

\begin{remark}\label{r:SpecialCaseF3}
For $K=\bF_3$, we can take $\epsilon = -1$. Then $H_{2n}$ has determinant $(-1)^n$ whereas $H_{2n}'$ has determinant $(-1)^{n-1} \cdot (-\epsilon)= (-1)^n \epsilon = (-1)^{n+1}$. In general, $H_{2n}$ and $H_{2n}'$ can be distinguished by computing their determinants. 
\end{remark}

Then we have \cite[Chapter VII.3, Thm. 3.5]{AdemMilgram}

\begin{theorem}\label{tOrderOrthogonal}
The orders of the orthogonal groups of a regular orthogonal vector space of dimension $2n+1$ resp. $2n$ over a finite field $K$ with $k=p^{\nu}$, $p\neq 2$,   are as follows:
\begin{align*}
\left| \operatorname{O}(2n+1, K) \right| &= 2 k^{n^2} \prod_{i=1}^n \left( k^{2i} - 1 \right), \\
\left| \operatorname{O}^+(2n, K) \right| &= 2 k^{n(n-1)} (k^n - 1) \prod_{i=1}^{n-1} \left( k^{2i} - 1 \right), \\
\left| \operatorname{O}^-(2n, K) \right| &= 2 k^{n(n-1)} (k^n + 1) \prod_{i=1}^{n-1} \left( k^{2i} - 1 \right).
\end{align*}
\end{theorem}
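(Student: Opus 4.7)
The plan is to prove all three formulas simultaneously by induction on the dimension of $V$, using the orbit-stabilizer relation arising from Witt's extension theorem. Specifically, for $v_0\in V$ with $q(v_0)=a\neq 0$, Witt's theorem tells us that $\operatorname{O}(V)$ acts transitively on $N(a):=\{v\in V:q(v)=a\}$ with stabilizer $\operatorname{O}(v_0^{\perp})$, so
\[
|\operatorname{O}(V)|=|N(a)|\cdot|\operatorname{O}(v_0^{\perp})|.
\]

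The first step is to establish the number $I$ of isotropic vectors in $V$ for each type:
\[
I=\begin{cases}k^{2n}& \text{if }\dim V=2n+1,\\ k^{2n-1}+k^n-k^{n-1}& \text{if } V\text{ is of type }\operatorname{O}^+(2n,K),\\ k^{2n-1}-k^n+k^{n-1}& \text{if } V\text{ is of type }\operatorname{O}^-(2n,K).\end{cases}
\]
These counts follow by induction once one knows the count in dimension $2$, by decomposing $V=H_2\perp V'$ whenever the Witt index is positive and summing the $(2k-1)$ isotropic vectors of $H_2$ against the total count for $V'$. With $I$ in hand, the substitution $v\mapsto\lambda v$ gives $|N(a)|=|N(\lambda^2 a)|$, so $|N(a)|$ depends only on the square class of $a$; moreover, in even dimensions $\langle 1,\ldots,1\rangle\cong\langle\epsilon,\ldots,\epsilon\rangle$ admits an isometry scaling $q$ by $\epsilon$, forcing $|N(1)|=|N(\epsilon)|=(k^{2n}-I)/(k-1)$. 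A direct calculation then yields $|N(1)|=k^{n-1}(k^n-1)$ for $V$ of type $+$ and $|N(1)|=k^{n-1}(k^n+1)$ for $V$ of type $-$.

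The induction proceeds by alternating between even and odd dimensions. Starting from $V$ of type $\operatorname{O}^{\pm}(2n,K)$, I pick $v_0$ with $q(v_0)=1$; the complement $v_0^{\perp}$ has dimension $2n-1$ and, being odd-dimensional, has up to isomorphism a unique orthogonal group. Combining $|\operatorname{O}(2n-1,K)|=2k^{(n-1)^2}\prod_{i=1}^{n-1}(k^{2i}-1)$ (inductive hypothesis) with the above $|N(1)|$, one obtains the desired $|\operatorname{O}^{\pm}(2n,K)|=2k^{n(n-1)}(k^n\mp 1)\prod_{i=1}^{n-1}(k^{2i}-1)$. For the step from dimension $2n$ to $2n+1$, one picks $v_0$ of norm $1$ in a $(2n+1)$-dimensional space and identifies the type of $v_0^{\perp}$ via $\det(v_0^{\perp})=\det(V) \pmod{(K^*)^2}$; Remark \ref{r:SpecialCaseF3} (and its analog for general $K$) then gives the matching between determinant and the $\pm$-label. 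The base cases $|\operatorname{O}(\langle 1\rangle)|=2$ and $|\operatorname{O}^{\pm}(2,K)|=2(k\mp 1)$ are verified directly: the hyperbolic plane admits the $2(k-1)$ isometries $(x,y)\mapsto(\lambda x,\lambda^{-1}y)$ and $(x,y)\mapsto(\lambda y,\lambda^{-1}x)$, while $\operatorname{O}^-(2,K)$ has order twice the kernel of the surjective norm map $K(\sqrt{\epsilon})^*\to K^*$, which has order $k+1$.

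The main obstacle will be the careful bookkeeping of determinants and types across the recursion, in particular ensuring that when passing from an odd-dimensional space $V$ of a given determinant to the even-dimensional complement $v_0^{\perp}$, one correctly identifies whether the latter has type $+$ or $-$, since the sign of the factor $(k^n\pm 1)$ depends sensitively on this. Once this bookkeeping is settled, the telescoping products $\prod_{i=1}^{n-1}(k^{2i}-1)$ together with the correct leading factor $(k^n\pm 1)$ fall out of the recursion, establishing the three formulas.
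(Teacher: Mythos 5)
The paper itself does not prove this theorem (it is quoted verbatim from Adem--Milgram), so an inductive proof via Witt's extension theorem is a perfectly reasonable thing to attempt, and the even-dimensional half of your recursion is sound. However, there is a genuine gap in the odd-dimensional step. Your orbit--stabilizer identity $|\operatorname{O}(V)|=|N(1)|\cdot|\operatorname{O}(v_0^{\perp})|$ applied to $\dim V=2n+1$ requires the value of $|N(1)|$ for an \emph{odd}-dimensional space, and you never compute it. The equidistribution argument you use in even dimensions, $|N(1)|=(k^{\dim V}-I)/(k-1)$, is false in odd dimensions: there the two square classes of represented values have \emph{different} counts, namely $|N(a)|=k^{2n}+\chi\bigl((-1)^{n}a\det V\bigr)k^{n}$, where $\chi$ is the quadratic character. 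The scaling trick that equates $|N(1)|$ and $|N(\epsilon)|$ in even dimensions is unavailable here, because in odd dimension $q$ and $\epsilon q$ have determinants in different square classes and are therefore not isometric. Concretely, for $V=\langle 1,1,1\rangle$ over $\bF_3$ one has $|N(1)|=6=k^{2}-k$ rather than $k^{2}=9$; the naive count would give $|\operatorname{O}(3,\bF_3)|=9\cdot 8=72$ instead of the correct $48=6\cdot 8$. So the obstacle is not, as you put it, "bookkeeping of determinants and types": a genuinely new computation, producing the correction term $\pm k^{n}$, is required before the telescoping products can "fall out."

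The gap is fixable within your framework in two ways. Either compute the odd-dimensional counts $|N(a)|$ directly (by the same $H_2\perp V'$ recursion you use for isotropic vectors, carried out for all values $a$ simultaneously, or via Gauss sums); or avoid them entirely by averaging over the two square classes: if $q(v_0)=1$ and $q(w_0)=\epsilon$, then $\det(v_0^{\perp})$ and $\det(w_0^{\perp})$ differ by $\epsilon$, so by Proposition \ref{p:IsomClassesFiniteQuadratic} and Remark \ref{r:SpecialCaseF3} the complements have opposite types $s$ and $-s$, and one has
\[
|\operatorname{O}(2n+1,K)| = |N(1)|\,\bigl|\operatorname{O}^{s}(2n,K)\bigr| = |N(\epsilon)|\,\bigl|\operatorname{O}^{-s}(2n,K)\bigr|, \qquad |N(1)|+|N(\epsilon)|=2k^{2n},
\]
three equations which determine $|\operatorname{O}(2n+1,K)|=2k^{n^2}\prod_{i=1}^{n}(k^{2i}-1)$ without knowing $|N(1)|$ and $|N(\epsilon)|$ individually. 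A smaller point: your recursion for the number $I$ of isotropic vectors is also stated too loosely --- decomposing $V=H_2\perp V'$ one must pair $h\in H_2$ with $v'\in V'$ satisfying $q(h)=-q(v')$, which gives $I(V)=k\,I(V')+(k-1)k^{\dim V'}$, not $(2k-1)\cdot I(V')$ plus the unstated rest; this one is minor and easily repaired, but it feeds the same lesson that the non-isotropic level sets are where the real content lies.
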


\begin{remark}\label{rOrthogonalGroupDegenrate}
    If a vector space over $K$ equipped with a possibly degenerate quadratic form $q$ is decomposed into $V=R\oplus V'$ where $R$ is the radical of $q$ and $V'$ is a complement on which $q$ is nondegenerate, then 
    \[
\operatorname{O}(V) \cong  \operatorname{Hom}(V', R) \rtimes \left(\operatorname{GL}(R)  \times  \operatorname{O}(V')\right),
\]
since any isometry has to preserve $R$ and then acts on the quotient $V/R \simeq V'$. Hence in this case
\[
|\operatorname{O}(V)| = |\operatorname{GL}(R)| \cdot \underbrace{k^{d n}}_{\text{$\dim \operatorname{Hom}(V', R)$}} \cdot |\operatorname{O}(V')|,
\]
where $d = \dim(R)$, $n = \dim(V')$, 
\(|\operatorname{GL}(R)| = \prod_{i=0}^{d-1} (k^d - k^i)\) and $|\operatorname{O}(V')|$ is computed as in Theorem \ref{tOrderOrthogonal}.
\end{remark}

For the applications below, we will be interested in counting injective isometries from one regular quadratic space over $K$ into another. 

\begin{proposition}\label{p:CountingIsometries}
Let $V$ and $W$ be regular quadratic spaces over $K$ with $\dim W =\dim V +1$. Then the number of injective isometries of $V$ into $W$ is given by 
\[
\frac{\left| \mathrm{O}(W) \right| }{2}. 
\]
\end{proposition}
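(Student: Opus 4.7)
The plan is to apply the orbit-stabilizer theorem to the post-composition action of $\mathrm{O}(W)$ on the set $I$ of injective isometries $V\hookrightarrow W$. I will show this action is transitive and that the stabilizer of any fixed embedding has order $2$, from which the claim $|I|=|\mathrm{O}(W)|/2$ follows immediately.

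First I would verify that $I$ is nonempty. The case $\dim V=0$ is trivial, so assume $\dim W\geq 2$. A standard fact for a regular quadratic space over a finite field of odd characteristic is that $q_W$ takes every value in $K^*$ on nonzero vectors. Choose $w\in W$ with $q_W(w)=\det(W)/\det(V)$ (as classes in $K^*/(K^*)^2$). Then the hyperplane $w^\perp\subset W$ is regular of codimension one, and a short computation with Gram determinants shows $\det(w^\perp)=\det(V)$. By Proposition \ref{p:IsomClassesFiniteQuadratic}, $w^\perp\cong V$ as quadratic spaces, producing an element of $I$.

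Next I would invoke Witt's extension theorem: any isometry between regular subspaces of a regular quadratic space $W$ (over a field of characteristic different from $2$) extends to an element of $\mathrm{O}(W)$. Given any two embeddings $\iota_1,\iota_2\in I$, the isometry $\iota_2\circ\iota_1^{-1}$ between the regular subspaces $\iota_1(V),\iota_2(V)\subset W$ extends to some $g\in\mathrm{O}(W)$ satisfying $g\circ\iota_1=\iota_2$. Hence $\mathrm{O}(W)$ acts transitively on $I$.

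Finally I would compute the stabilizer of a fixed $\iota\in I$. An element $g\in\mathrm{O}(W)$ lies in $\mathrm{Stab}(\iota)$ iff $g$ restricts to the identity on $\iota(V)$. Since $\iota(V)$ is regular, we have the orthogonal decomposition $W=\iota(V)\oplus L$ with $L:=\iota(V)^\perp$ a regular one-dimensional subspace. Such a $g$ preserves $L$ and is determined by its restriction there, which must lie in $\mathrm{O}(L)=\{\pm 1_L\}$. Hence $|\mathrm{Stab}(\iota)|=2$, and orbit-stabilizer yields $|I|=|\mathrm{O}(W)|/2$. There is no serious obstacle here: the only substantive inputs are the surjectivity of $q_W$ onto $K$ for $\dim W\geq 2$ and Witt's extension theorem, both standard over finite fields of odd characteristic.
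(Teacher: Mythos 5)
Your proof is correct and follows essentially the same route as the paper's: both arguments establish the existence of one embedding, use Witt's extension theorem to show $\mathrm{O}(W)$ acts transitively on the set of injective isometries, identify the stabilizer as the orthogonal group of the one-dimensional orthogonal complement (hence of order $2$), and conclude by orbit--stabilizer. The only cosmetic difference is the existence step, where the paper performs a case analysis on the isometry types $\langle 1,\dots,1\rangle$ and $\langle 1,\dots,\epsilon\rangle$ while you use universality of $q_W$ together with a determinant computation and Proposition \ref{p:IsomClassesFiniteQuadratic}; both are equally valid.
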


\begin{proof}
If $V\simeq \langle 1, 1, \dots , 1 \rangle$, then there is an injective isometry of $V$ into $W$, and in fact, $W \simeq V \perp \langle 1 \rangle$ or $W \simeq V \perp \langle \epsilon \rangle$. If $V\simeq \langle 1, 1, \dots , \epsilon \rangle$ ($n$ entries), then $W\simeq \langle 1, 1, \dots , 1 \rangle$ or $\simeq \langle 1, 1, \dots , \epsilon \rangle$ ($n+1$ entries), but since $\langle 1, 1 \rangle \simeq \langle \epsilon, \epsilon \rangle$, in the first case $W \simeq V \perp \langle \epsilon \rangle$ whereas in the second case $W \simeq V \perp \langle 1 \rangle$. In particular, there exists an injective isometry $V \to W$ also in this case, and $W$ in all cases splits into a subspace isometric to $V$ and an orthogonal complement on which the quadratic form is nondegenerate. By Witt's extension theorem, all injective embeddings $V \to W$ form one orbit under $\mathrm{O}(W)$. The stabilizer of a fixed injective embedding is the orthogonal group of the complement of the copy of $V$ inside $W$ specified by the embedding, so this is $\bZ/2$. This implies the formula given in the statement. 
\end{proof}

We will also need one result for orthogonal vector spaces over $\bF_2$ below.

Following \cite[Chapter 12]{Grove}, we call a finite-dimensional vector space $V$ over a field $K$ of characteristic $2$ equipped with a quadratic form $q$ \emph{nondefective} if the radical of $V$ is zero (the more usual term nondegenerate being reserved for a different concept in this context in many sources). 

Also, according to \cite[Thm. 12.9]{Grove}, if $\dim V=n=2m$ is even and $K$ is perfect, $V$ nondefective, then there is a basis $\{v_i\}$ of $V$ such that 
\[
q(\sum_i a_i v_i) = \sum_{i=1}^m a_ia_{i+m}
\]
which is called \emph{type 2}, or
\[
q(\sum_i a_i v_i) = \sum_{i=1}^{m-1} a_ia_{m-1+i} + a_{2m-1}^2 + a_{2m-1}a_{2m} + b a_{2m}^2 
\]
with $x^2 + x + b$ irreducible in $K[x]$ (which is called \emph{type 3}). 

\begin{theorem}\label{t:OrderOrthF2}
Suppose that $K$ has $k=2^r$ elements and $V$ is a nondefective quadratic space over $K$ of dimension $n=2m$. If $V$ is of type $2$ then
\[
|\mathrm{O}(V)|= 2k^{\frac{n(n-2)}{4}}\left( k^{\frac{n}{2}} -1 \right) \prod_{i=1}^{\frac{n-2}{2}} \left( k^{2i} -1\right),
\]
and if $V$ is of type $3$ then
\[
|\mathrm{O}(V)|= 2k^{\frac{n(n-2)}{4}}\left( k^{\frac{n}{2}} +1 \right) \prod_{i=1}^{\frac{n-2}{2}} \left( k^{2i} -1\right).
\]
\end{theorem}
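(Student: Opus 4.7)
The plan is to prove the formula by induction on $n=2m$, following the classical strategy. The main tools are Witt's extension theorem for nondefective quadratic spaces over a perfect field of characteristic $2$, together with an orbit--stabiliser count for the action of $\mathrm{O}(V)$ on the set of nonzero singular vectors $\{v\in V\setminus \{0\}:q(v)=0\}$.

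First I would verify the base case $n=2$ by hand. In type $2$, $V$ is the hyperbolic plane with basis $\{u_1,u_2\}$ and $q(au_1+bu_2)=ab$; then $\mathrm{O}(V)$ is generated by the swap $u_1\leftrightarrow u_2$ together with the scalings $(u_1,u_2)\mapsto(\lambda u_1,\lambda^{-1}u_2)$ for $\lambda\in K^{\ast}$, giving $|\mathrm{O}(V)|=2(k-1)$. In type $3$, $V$ is anisotropic and $q$ is identified with the norm form of the quadratic extension $K[x]/(x^2+x+b)$; a standard computation identifies $\mathrm{O}(V)$ with the semidirect product of the norm-one subgroup of $K(\alpha)^{\ast}$ by the nontrivial Galois automorphism, giving $|\mathrm{O}(V)|=2(k+1)$. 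Both agree with the claimed formulas at $m=1$.

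For the inductive step, a direct count on the normal forms of each type gives the number of nonzero singular vectors
\[
N_n^{+}=(k^m-1)(k^{m-1}+1), \qquad N_n^{-}=(k^m+1)(k^{m-1}-1).
\]
By Witt's extension theorem, $\mathrm{O}(V)$ acts transitively on these, so orbit--stabiliser gives $|\mathrm{O}(V)|=N_n^{\pm}\cdot|\mathrm{Stab}(v)|$ for any fixed nonzero singular $v$. To compute the stabiliser, observe that $\mathrm{Stab}(v)$ preserves the flag $\langle v\rangle\subset v^{\perp}\subset V$ (where $v^{\perp}$ is taken with respect to the associated alternating bilinear form $b$), and the quotient $v^{\perp}/\langle v\rangle$ inherits a nondefective quadratic form of dimension $n-2$ and of the same type as $V$. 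Thus there is a restriction map $\mathrm{Stab}(v)\to\mathrm{O}(v^{\perp}/\langle v\rangle)$, whose kernel is generated by the Eichler--Siegel transformations with pivot $v$ and has order $k^{n-2}$. This yields the recursion
\[
|\mathrm{O}(V_n^{\pm})|=N_n^{\pm}\cdot k^{n-2}\cdot|\mathrm{O}(V_{n-2}^{\pm})|,
\]
and a direct induction on $m$, using the base case above, reproduces both formulas in the statement.

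The main obstacle I expect is the careful bookkeeping in the stabiliser analysis in characteristic $2$: because $b$ is alternating, every singular $v$ lies in its own orthogonal complement $v^{\perp}$, so one cannot simply split off a hyperbolic partner of $v$ as in the odd-characteristic case of Proposition~\ref{p:CountingIsometries}. One must instead work with $q$ rather than $b$ throughout, verify that the restriction map is surjective onto $\mathrm{O}(v^{\perp}/\langle v\rangle)$ by lifting isometries via Eichler transformations, and check that the kernel really has size $k^{n-2}$. Since the result is entirely classical, an equally legitimate option is simply to cite \cite{Grove} (Chapter 14), \cite{Taylor}, or \cite{Dieudonne}, where exactly this induction is carried out.
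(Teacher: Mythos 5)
Your proposal is correct, but it takes a genuinely different route from the paper: the paper's entire proof is the one-line citation ``This is \cite[Thm. 14.48]{Grove}'', i.e.\ exactly the fallback option you mention in your final paragraph, whereas you sketch the classical self-contained induction. Your sketch checks out in all its load-bearing steps: the base cases $2(k-1)$ and $2(k+1)$ are right (the type-$3$ plane is the norm form of $\bF_{k^2}/\bF_k$, whose isometry group is the norm-one subgroup of order $k+1$ extended by Frobenius); the singular-vector counts $N_n^{+}=(k^m-1)(k^{m-1}+1)$ and $N_n^{-}=(k^m+1)(k^{m-1}-1)$ are the standard ones and are valid in characteristic $2$; Witt's extension theorem does hold for nondefective spaces over a perfect field of characteristic $2$, giving transitivity on nonzero singular vectors; and the stabiliser analysis is sound. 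In fact the kernel of $\mathrm{Stab}(v)\to\mathrm{O}(v^{\perp}/\langle v\rangle)$ can be pinned down exactly as you suggest: choosing a singular $w$ with $b(v,w)=1$ (replace any $w$ with $b(v,w)=1$ by $w+q(w)v$), one gets $V=\langle v,w\rangle\perp U$, every element of the kernel is the Eichler map $x\mapsto x+b(u_0,x)v$ on $U$, $w\mapsto w+u_0+q(u_0)v$ for a unique $u_0\in U$, so the kernel has order exactly $k^{n-2}$, and splitting off $\langle v,w\rangle$ shows both surjectivity of the restriction map and that $v^{\perp}/\langle v\rangle$ has the same type (Arf invariant is additive and vanishes on a hyperbolic plane). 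The recursion $|\mathrm{O}(V_n^{\pm})|=N_n^{\pm}\,k^{n-2}\,|\mathrm{O}(V_{n-2}^{\pm})|$ then reproduces both formulas, as one verifies using $k^{2(m-1)}\cdot k^{(m-1)(m-2)}=k^{m(m-1)}$ and $(k^{m-1}\mp 1)(k^{m-1}\pm 1)=k^{2(m-1)}-1$. What the two approaches buy: the paper's citation is appropriate for a result that is purely auxiliary to its lattice-theoretic counts, while your argument makes the statement verifiable without consulting \cite{Grove} and makes transparent why the only difference between the two types is the sign pattern in the singular-vector count.
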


\begin{proof}
This is \cite[Thm. 14.48]{Grove}. 
\end{proof}

\subsection{Cubic fourfolds}\label{SubSec:Cubic Fourfolds}

The middle cohomology $H^4(X,\bZ)$ of a smooth cubic fourfold $X$ is an odd unimodular lattice of signature $(21,2)$. We let $\eta\in H^4(X,\bZ)$ be the class of the square of the hyperplane class. The primitive cohomology $H^4(X,\bZ)_{prim}:=\langle \eta\rangle^\perp$ is an even lattice of signature $(20,2)$, but is no longer unimodular. In fact, $H^4(X,\bZ)_{prim}\cong \Lambda,$ where $$\Lambda:= U^{\oplus 2}\oplus E_8^{\oplus 2} \oplus A_2.$$
In particular, the discriminant group $D(\Lambda):=\Lambda^*/\Lambda \cong \bZ/3\bZ,$ with quadratic form $q$ that satisfies $q(\xi)=\frac{2}{3} \mod 2\bZ$ where $\xi$ is a generator.

We define the following lattices:
\begin{align*}
    A(X)&:=H^{2,2}(X)\cap H^4(X,\bZ)\\
    A(X)_{prim}&:=A(X)\cap \langle \eta\rangle^\perp.
\end{align*}
We call $A(X)$ (resp. $A(X)_{prim}$) the algebraic lattice (resp. primitive algebraic lattice) of $X$.
We define the transcendetal lattice $T(X):=A(X)_{prim}^\perp\subset H^4(X,\bZ)_{prim},$ equipped with the induced Hodge structure. Equivalently, $T(X)=A(X)^\perp\subset H^4(X,\bZ).$ 

In \cite{AT14}, Addington and Thomas introduced a Hodge structure associated with the $K3$ category $\Ku(X),$ defined on the lattice $K_{top}(\Ku(X))$ equipped with the Euler pairing.  We briefly recall the construction.
 There is an embedding (given by the Mukai vector) of 
 $v: K_{top}(\Ku(X))\hookrightarrow H^*(X,\bQ)$, where $E\mapsto ch(E)\sqrt{td(X)}$. This defines a weight 2 Hodge structure on $K_{top}(\Ku(X))$, denoted by $\tH(\Ku(X),\bZ)$, defined by
 $\tH^{2,0}(\Ku(X), \bC)=v^{-1}(H^{3,1}(X, \bC))$. We often call the $\tH(\Ku(X),\bZ)$ the Addington-Thomas lattice of $X$.

As an abstract lattice, $\tH(\Ku(X),\bZ)\cong \widetilde{\Lambda},$ where
$$\widetilde{\Lambda}:=U^{\oplus 4} \oplus E_8^{\oplus 2},$$
an even unimodular lattice of signature $(20,4)$. 

\subsection{Fourier--Mukai partners}\label{subsec: FM partners}
Let $X\subset \bP^5$ be a smooth cubic fourfold. There exists a semiorthogonal decomposition of $D^b(X)$:
$$D^b(X)=\langle \Ku(X),\calO_X, \calO_X(1), \calO_X(2)\rangle,$$ and we call $\Ku(X)$ the Kuznetsov component of $X$.
We say that two smooth cubic fourfolds $X, Y$ are \textbf{Fourier--Mukai partners} if there exists an equivalence of categories $$\Ku(X)\simeq \Ku(Y).$$ 

The focus of this article is to develop an algorithmic procedure to count the number of Fourier--Mukai partners for a given cubic fourfold - by \cite[Theorem 1.1]{Huy17} this is a finite number. Recall the following:

\begin{proposition}\cite[Proposition 3.4]{Huy17}
    Let $X,Y$ be cubic fourfolds. Then any Fourier--Mukai equivalence $\Ku(X)\simeq \Ku(Y)$ induces a Hodge isometry 
    $\tH(\Ku(X),\bZ)\cong \tH(\Ku(Y),\bZ).$
\end{proposition}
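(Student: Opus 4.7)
The plan is to verify that the three layers of structure defining $\tH(\Ku(X),\bZ)$ — the underlying lattice $K_{top}(\Ku(X))$, the Euler pairing, and the weight-two Hodge filtration induced by the Mukai vector — are each transported by a Fourier--Mukai equivalence $\Phi\colon \Ku(X) \simeq \Ku(Y)$ to the corresponding structures on the target.

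First I would lift $\Phi$ to a Fourier--Mukai kernel $E \in D^b(X\times Y)$, using that $\Phi$ composed with the natural inclusion $\Ku(Y)\hookrightarrow D^b(Y)$ and projection $D^b(X)\to \Ku(X)$ (both themselves representable by kernels on $X\times X$ and $Y\times Y$) yields an exact functor between bounded derived categories of smooth projective varieties, which by Orlov's representability theorem is isomorphic to a Fourier--Mukai transform $\Phi_E$. The kernel then induces a map on topological K-theory $\Phi^K_E\colon K_{top}(X) \to K_{top}(Y)$ by $F\mapsto p_{Y*}(E\otimes p_X^*F)$, and its restriction to $K_{top}(\Ku(X))$ takes values in $K_{top}(\Ku(Y))$; a quasi-inverse is furnished by the kernel of $\Phi^{-1}$, so we obtain an isomorphism of the underlying lattices.

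Next I would check preservation of the two remaining structures. The Euler pairing is categorical, $\chi(F,G)=\sum_i (-1)^i \dim\Ext^i(F,G)$, so it is automatically preserved by any equivalence, making our lattice isomorphism an isometry. For the Hodge structure, Grothendieck--Riemann--Roch gives the intertwining
$$v(\Phi_E F) \; = \; p_{Y*}\bigl(v(E)\cdot p_X^* v(F)\bigr),$$
so the induced map on $\tH$ is the restriction of the cohomological Fourier--Mukai transform with kernel the algebraic class $v(E)\in H^*(X\times Y,\bQ)$. Since $v(E)$ is a rational Hodge class, this cohomological transform preserves the Hodge decomposition after accounting for the grading shift built into $v=\mathrm{ch}(-)\sqrt{\mathrm{td}}$. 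In particular $\tH^{2,0}(\Ku(X),\bC)=v^{-1}(H^{3,1}(X,\bC))$ is sent into $v^{-1}(H^{3,1}(Y,\bC))=\tH^{2,0}(\Ku(Y),\bC)$, which combined with the isometry property gives the desired Hodge isometry.

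The main technical obstacle I expect is the representability by a Fourier--Mukai kernel of a functor defined only on the Kuznetsov components: the direct application of Orlov's theorem is not immediate since $\Ku(X)$ is not $D^b$ of a variety, so one has to argue either by extending to an equivalence of ambient derived categories using the semiorthogonal decomposition (and the kernels for the projection/inclusion functors coming from the exceptional collection $\langle \calO_X,\calO_X(1),\calO_X(2)\rangle$), or alternatively by appealing to dg-enhancement results of To\"en and Lunts--Orlov to bypass the non-uniqueness of lifts intrinsic to triangulated categories. Once the kernel is in hand, the remaining steps are essentially a bookkeeping exercise with characteristic classes and Hodge gradings.
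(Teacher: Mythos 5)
First, note that the paper does not prove this proposition at all: it is quoted verbatim from \cite[Proposition 3.4]{Huy17}, so the comparison is really with Huybrechts' proof. Your overall skeleton (kernel $E$ on $X\times Y$ $\Rightarrow$ induced map on topological K-theory $\Rightarrow$ Euler pairing and Mukai-vector/GRR compatibility $\Rightarrow$ Hodge isometry) is exactly the argument in \cite{Huy17}. The problem is your first step.

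The genuine gap is the appeal to Orlov's representability theorem. Orlov's theorem applies to \emph{fully faithful} exact functors between bounded derived categories of smooth projective varieties, but the composition $D^b(X)\to \Ku(X)\xrightarrow{\;\Phi\;}\Ku(Y)\hookrightarrow D^b(Y)$ is never fully faithful: the projection kills the exceptional objects $\calO_X,\calO_X(1),\calO_X(2)$, so this composition sends nonzero objects to zero. Hence Orlov gives you nothing here, and no general representability theorem is available; whether an arbitrary equivalence $\Ku(X)\simeq \Ku(Y)$ admits such a kernel is a well-known subtle problem (your fallback via dg-enhancements does not resolve it either, since an arbitrary triangulated functor need not lift to the dg level). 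This is precisely why the statement is phrased for \emph{Fourier--Mukai} equivalences: in \cite{Huy17} an equivalence $\Ku(X)\simeq\Ku(Y)$ is by definition of Fourier--Mukai type if the composition above is isomorphic to $\Phi_E$ for some $E\in D^b(X\times Y)$. So the existence of the kernel is a hypothesis, not something to be derived; your proof should begin "let $E$ be a kernel representing the composition" and discard the representability discussion entirely.

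A secondary point of rigor: once you have $E$, the claim that the Euler pairing is preserved "automatically, because it is categorical" is too quick. The map $\Phi^K_E$ on $K_{top}$ is defined by the kernel, and $K_{top}(\Ku(X))$ contains classes that are not classes of objects of the category, so categorical invariance of $\chi$ says nothing about them. One instead uses the adjoint kernel $E^\vee\otimes p_Y^*\omega_Y[\dim Y]$ to show $\chi_Y(\Phi^K_E(v),w)=\chi_X(v,\Phi^K_{E_R}(w))$ for \emph{all} topological classes, and deduces the isometry (and also the fact that $K_{top}(\Ku(X))$, defined as the orthogonal complement of $[\calO_X],[\calO_X(1)],[\calO_X(2)]$, is carried into $K_{top}(\Ku(Y))$, which you also glossed over). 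Your GRR/Mukai-vector argument for compatibility with the Hodge filtration is correct and is the same as in \cite{Huy17}.
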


Below we will often abbreviate $\tH(\Ku(X),\bZ )$ to $\tH(X)$ for ease of notation. 

On the other hand, we have the following standard conjecture \cite[Conjecture 6.1]{HuybrechtsK3Update}.

\begin{conjecture}\label{conj:HodgeEqualsDerived}
Two cubic fourfolds $X$ and $Y$ are Fourier--Mukai partners if and only if there exists an orientation-preserving Hodge isometry $\tH(X) \simeq \tH(Y)$. 
\end{conjecture}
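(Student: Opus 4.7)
The only-if direction is already Proposition 3.4 of \cite{Huy17} recalled just above, so the real content is the converse: starting from an orientation-preserving Hodge isometry $\varphi \colon \tH(X) \xrightarrow{\sim} \tH(Y)$, produce an equivalence $\Ku(X) \simeq \Ku(Y)$. My plan is to emulate the Mukai--Orlov strategy for the derived Torelli theorem of $K3$ surfaces, transplanted to the Kuznetsov component by means of Bridgeland stability conditions on $\Ku(X)$ of the kind constructed by Bayer--Lahoz--Macrì--Stellari. In outline: realise $\varphi$ (or its composition with a shift or spherical twist) as the cohomological action of a Fourier--Mukai kernel coming from a well-chosen moduli space of stable objects in $\Ku(X)$.

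Concretely, the first step would be to select an isotropic Mukai vector $v \in \tH(X)$, i.e. $v^2 = 0$, that is simultaneously algebraic in $\tH(X)$ and, via $\varphi$, algebraic in $\tH(Y)$, primitive, and admits a companion $w$ with $\langle v,w\rangle = 1$. Such a $v$ exists precisely when a lattice-theoretic condition on $\tH(X)$ and $\tH(Y)$ is satisfied, very much in the spirit of the virtual counts developed in the present paper. For $v$ and a generic $v$-compatible stability condition $\sigma$, the moduli space $M_\sigma(v)$ of $\sigma$-stable objects in $\Ku(X)$ is a smooth projective hyperk\"ahler manifold of $K3^{[n]}$-type, whose Mukai-theoretic period is canonically the Hodge structure on $v^\perp / \bZ v \subset \tH(X)$; a (possibly twisted) universal family produces a Fourier--Mukai kernel exhibiting an equivalence of $\Ku(X)$ with the K3 category attached to $M_\sigma(v)$. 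Running the parallel construction on the $Y$ side with $\varphi(v)$ yields the same abstract moduli datum, and composing the two equivalences should produce the desired $\Ku(X) \simeq \Ku(Y)$. The orientation hypothesis enters exactly as in the work of Huybrechts--Macr\`i--Stellari for $K3$ surfaces: it is needed to ensure that the specific $\varphi$ (rather than only some representative in its coset under autoequivalence-induced isometries) is realised.

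The reason this remains a conjecture, and what I would expect to be the main obstacle, is that $X$ is not tautologically a moduli space of objects in $\Ku(X)$: recognising $M_\sigma(v)$ and its $Y$-side analog as geometrically linked to the original cubic fourfolds, rather than as abstract hyperk\"ahler manifolds attached to some $K3$ category, requires arranging $v$ and $\sigma$ so that $M_\sigma(v)$ is birationally a known variety attached to $Y$ (such as the Fano variety of lines, the LLSvS eightfold, or a Hilbert scheme thereof). A second, more technical difficulty is the absence of an analog of Orlov's representability theorem for admissible subcategories, so equivalences have to be manufactured explicitly from kernels rather than extracted abstractly. The lattice-theoretic selection of $v$ is accessible by Nikulin-style arguments of the type used throughout this paper; it is the geometric identification of $Y$ from the moduli data that seems to require genuinely new input, and this is where a complete proof is currently out of reach.
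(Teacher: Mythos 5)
This statement is a \emph{conjecture} (Huybrechts' Conjecture 6.1 in \cite{HuybrechtsK3Update}), and the paper neither proves it nor claims to: it explicitly notes that ``a general proof seems to be lacking,'' introduces the weaker notion of Hodge-theoretic Fourier--Mukai partners (Definition \ref{d:HFMPartners}) precisely to sidestep the conjecture, and only upgrades Hodge-theoretic partnership to genuine derived partnership in the special situation of Theorem \ref{t:TwistedK3s}, where both Kuznetsov components are derived categories of twisted K3 surfaces and the result is due to Mukai, Orlov, and Huybrechts--Stellari. So there is no paper proof to compare against, and your proposal --- which you candidly present as a strategy sketch rather than a proof --- should be judged on whether the strategy could in principle close the gap. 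It cannot, for a concrete reason beyond the obstacles you list.

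The fatal point is your very first step: choosing a primitive isotropic Mukai vector $v \in \tH(X)$ with $v^2 = 0$ that is \emph{algebraic}. For a very general cubic fourfold, the algebraic part $\tH^{1,1}(\Ku(X),\bZ) \cap \tH(X)$ is the rank-two definite lattice $A_2$ (up to sign), which contains no nonzero isotropic vectors at all. In fact, by Huybrechts' theorem the existence of such a $v$ is \emph{equivalent} to $\Ku(X)$ being equivalent to the derived category of a twisted K3 surface --- exactly the hypothesis of Theorem \ref{t:TwistedK3s}, under which the conjecture is already known. So the moduli-of-stable-objects strategy via Bayer--Lahoz--Macr\`i--Stellari stability conditions, even if every technical step (existence of universal families, identification of $M_\sigma(v)$, composition of kernels) were carried out, would at best reprove the twisted-K3 case and says nothing about the general cubic, where the conjecture remains genuinely open. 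A secondary caveat: even the ``only if'' direction is not quite free, since \cite[Proposition 3.4]{Huy17} produces a Hodge isometry but the orientation-preserving refinement is an additional statement (for K3 surfaces this was the hard theorem of Huybrechts--Macr\`i--Stellari), which is why the paper records Conjecture \ref{conj:HodgeGivesDerived} as a consequence rather than an equivalence.
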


Note that the negative directions of $\tH(\Ku(X),\bZ)$ (and similarly for $Y$) come with a natural orientation given by the real and imaginary parts of 
$\tH^{2,0}(\Ku(X), \bC)$ and the oriented basis $\lambda_1, \lambda_2$ of $-A_2\subset \tH^{1,1}(\Ku(X), \bZ)$ where $\lambda_1, \lambda_2$ are the projections of $\mathcal{O}_l(1)$ and $\mathcal{O}_l(2)$, for a line $l\subset X$, into $K_{top}(\Ku(X))$, see \cite[\S 3.1]{Huy17}.
Here by orientation-preserving Hodge isometry we mean a Hodge isometry that preserves these natural orientations.

Note that by \cite[Lemma 2.3]{Huy17} there is always an orientation-reversing Hodge isometry of $\tH(X)$ in the case of a smooth cubic fourfold $X$. Hence Conjecture \ref{conj:HodgeEqualsDerived} will imply

\begin{conjecture}\label{conj:HodgeGivesDerived}
Two cubic fourfolds $X$ and $Y$ are Fourier--Mukai partners if and only if $\tH(X)$ and $\tH(Y)$ are Hodge isometric. 
\end{conjecture}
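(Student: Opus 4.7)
The plan is to deduce Conjecture \ref{conj:HodgeGivesDerived} from Conjecture \ref{conj:HodgeEqualsDerived} together with the existence of an orientation-reversing Hodge auto-isometry of $\tH(X)$ guaranteed by \cite[Lemma 2.3]{Huy17}. The two implications are handled separately.

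For the forward direction, if $X$ and $Y$ are Fourier--Mukai partners, then Proposition 3.4 of \cite{Huy17}, stated earlier in this subsection, already produces a Hodge isometry $\tH(X) \simeq \tH(Y)$, with no orientation condition required. This direction does not use Conjecture \ref{conj:HodgeEqualsDerived} at all.

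For the reverse direction, let $\psi \colon \tH(X) \to \tH(Y)$ be any Hodge isometry. Since the negative-definite $4$-dimensional part of each $\tH$ carries a canonical orientation determined by the real and imaginary parts of a generator of $\tH^{2,0}$ together with the ordered basis $\lambda_1, \lambda_2$ of the copy of $-A_2$ inside $\tH^{1,1}$, the isometry $\psi$ is either orientation-preserving or orientation-reversing. In the first case we apply Conjecture \ref{conj:HodgeEqualsDerived} directly to obtain an equivalence $\Ku(X) \simeq \Ku(Y)$. In the second case we precompose with an orientation-reversing Hodge auto-isometry $\sigma \colon \tH(X) \to \tH(X)$, which exists by \cite[Lemma 2.3]{Huy17}; then $\psi \circ \sigma$ is an orientation-preserving Hodge isometry $\tH(X) \to \tH(Y)$, and Conjecture \ref{conj:HodgeEqualsDerived} again produces the required equivalence.

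The only content of the argument beyond citing the two inputs is the formal observation that the composition of two orientation-reversing isometries is orientation-preserving, which is immediate from the definition of orientation as the choice of an ordered basis of a $4$-dimensional negative-definite real subspace. Consequently there is no serious obstacle: the statement is essentially a bookkeeping consequence of Conjecture \ref{conj:HodgeEqualsDerived} combined with the symmetry lemma \cite[Lemma 2.3]{Huy17} that removes the orientation assumption.
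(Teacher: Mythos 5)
Your argument is exactly the paper's own reasoning: the paper deduces Conjecture \ref{conj:HodgeGivesDerived} from Conjecture \ref{conj:HodgeEqualsDerived} together with the orientation-reversing Hodge auto-isometry of $\tH(X)$ from \cite[Lemma 2.3]{Huy17}, precisely as you do, with your composition of two orientation-reversing isometries making explicit the step the paper leaves implicit. Just be aware that this is a conditional deduction, not an unconditional proof --- the statement remains labelled a conjecture in the paper because Conjecture \ref{conj:HodgeEqualsDerived} is itself open.
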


We can establish the preceding conjecture for various concrete examples of cubics. However, because a general proof seems to be lacking, and our count will naturally yield the numbers of cubics with isometric Addington-Thomas Hodge structures, it seems reasonable to make the following definition.

\begin{definition}\label{d:HFMPartners}
We call smooth cubic fourfolds $X$ and $Y$ \emph{Hodge-theoretic Fourier--Mukai partners} if $\tH(X)$ and $\tH(Y)$ are Hodge isometric.
\end{definition}

One general result that is useful is

\begin{theorem}\label{t:TwistedK3s}
Let $X$ and $Y$ be cubic fourfolds whose Kuznetsov components are equivalent to derived categories of twisted K3 surfaces. Then $X$ and $Y$ are Fourier--Mukai partners if and only if they are Hodge-theoretic Fourier--Mukai partners. 
\end{theorem}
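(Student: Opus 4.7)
The forward implication is already recorded in the excerpt: by \cite[Proposition 3.4]{Huy17}, any Fourier--Mukai equivalence $\Ku(X)\simeq \Ku(Y)$ induces a Hodge isometry $\tH(X)\cong \tH(Y)$. So we only have to establish the reverse direction, and the plan is to reduce it to the derived Torelli theorem for twisted K3 surfaces of Huybrechts--Stellari.

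By hypothesis there exist twisted K3 surfaces $(S,\alpha)$ and $(S',\alpha')$ and Fourier--Mukai equivalences
\[
\Phi\colon \Ku(X)\xrightarrow{\sim} D^b(S,\alpha),\qquad
\Psi\colon \Ku(Y)\xrightarrow{\sim} D^b(S',\alpha').
\]
Each of these induces a Hodge isometry of the Addington--Thomas lattice with the twisted Mukai--Hodge structure, namely $\tH(X)\cong \tH(S,\alpha,\bZ)$ and $\tH(Y)\cong \tH(S',\alpha',\bZ)$, and these isometries may furthermore be taken to be orientation-preserving (with respect to the natural four-dimensional positive orientations on each side). Given a Hodge isometry $\varphi\colon \tH(X)\xrightarrow{\sim}\tH(Y)$, composing with $\Phi$ and $\Psi^{-1}$ yields a Hodge isometry
\[
\widetilde{\varphi}\colon \tH(S,\alpha,\bZ)\xrightarrow{\sim}\tH(S',\alpha',\bZ).
\]

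The one subtlety is the orientation. The derived Torelli theorem for twisted K3 surfaces (Huybrechts--Stellari, together with the orientation result of Huybrechts--Macrì--Stellari/Reinecke) says that $D^b(S,\alpha)\simeq D^b(S',\alpha')$ if and only if there is an \emph{orientation-preserving} Hodge isometry of twisted Mukai lattices. If $\widetilde{\varphi}$ already preserves orientation we are done. Otherwise we precompose $\varphi$ with an orientation-reversing Hodge isometry of $\tH(X)$, which exists by \cite[Lemma 2.3]{Huy17}. Because the identification $\tH(X)\cong \tH(S,\alpha,\bZ)$ was orientation-preserving, this flips the orientation of $\widetilde{\varphi}$, yielding an orientation-preserving Hodge isometry $\tH(S,\alpha,\bZ)\xrightarrow{\sim}\tH(S',\alpha',\bZ)$.

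Applying the twisted derived Torelli theorem to this isometry produces an equivalence $D^b(S,\alpha)\simeq D^b(S',\alpha')$, and composition with $\Phi$ and $\Psi^{-1}$ provides the desired equivalence $\Ku(X)\simeq \Ku(Y)$. The only nontrivial step is the orientation bookkeeping in the preceding paragraph; this is the main obstacle in the argument, and it is handled precisely because \cite[Lemma 2.3]{Huy17} supplies the missing orientation-reversing self-isometry on the cubic side.
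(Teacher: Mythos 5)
Your proposal is correct and follows essentially the same route as the paper: the paper's proof is just a citation of \cite[Chapter 7, Theorem 3.17]{Huybrechtscubicsbook} (due to Mukai, Orlov, Huybrechts--Stellari), and your argument --- reducing to the twisted derived Torelli theorem and repairing the orientation with the orientation-reversing Hodge isometry of \cite[Lemma 2.3]{Huy17} --- is precisely the argument underlying that citation.
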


\begin{proof}
This is due to Mukai, Orlov, Huybrechts-Stellari, compare \cite[Chapter 7, Theorem 3.17]{Huybrechtscubicsbook}.
\end{proof}

\section{The virtual count}\label{sec: the virtual count}

Let $X$ be a smooth cubic fourfold.  At the outset we assume we are given its algebraic lattice $A:=A(X)$ together with the class $\eta=h^2\in A$, and its transcendental lattice $T:=T(X)$. Recall that $T\cong A^\perp\subset H^4(X,\bZ)$. Note that $T$ is an even lattice.

From this input datum we compute the following:
\begin{enumerate}
    \item We compute $\Aprim:= \AXprim \coloneqq \eta^\perp\subset A.$ This is an even positive definite lattice.
    \item We compute the discriminant group $(D(\Aprim), q_{\Aprim})$ directly.
    \item We compute the discriminant group $(D(T),q_{T})$ directly.
\end{enumerate}

We make the following simplifying assumptions:

\begin{assumption}\label{assumption 1}
    We have $\rank (A)<21,$ and $X$ is a very general cubic with the given $A(X)=A$ and $T(X)=T$.
\end{assumption}
This ensures that the only Hodge isometries of the transcendental cohomology $T(X)$ are $\pm \id_{T(X)}$ \cite{zarhin}. 

\begin{assumption}\label{assumption 3}
    The transcendental lattice $T$ satisfies the conditions of Theorem \ref{theorem: unique embedding into AT lattice} where $H:=\tilde{H}(\Ku(X),\bZ)$ is the Addington-Thomas lattice. 
\end{assumption}
This assumption will be used to prove  that any isometry $T(X)\cong T(Y)$ for two cubic fourfolds $X$ and $Y$ extends to an isometry of the corresponding Addington-Thomas lattices. 

\medskip

\begin{center}
\fbox{\textbf{From now on, we always  assume that Assumptions 1 and 2 hold.}}
\end{center}

\medskip

Our ultimate aim is to compute the number of Fourier--Mukai partners of $X$. We will start by counting so called Virtual Fourier--Mukai partners:

\begin{definition}
    Let $X$ be as above, and let $T\cong T(X)$. 
    Let $K$ be an orthogonal complement of $T$ for some primitive embedding $T\hookrightarrow \Lambda$ such that there does not exist $v\in K$ with $v^2=2.$

    A \textbf{virtual Fourier--Mukai Partner} of $X$ with algebraic lattice $K$ is an integral weight 4, rank 22 Hodge structure $L$ satisfying:
    \begin{enumerate}
         \item\label{a} $K\oplus T\subset L\subset K^*\oplus T^*$ and the Hodge structure on $L$ is induced by declaring all classes in $K$ to be algebraic and declaring the Hodge structure on $T$ to be the given one,
        \item $K, T$ are both saturated in $L$
        \item\label{c} $L$ has discriminant 3.
    \end{enumerate}
    We denote the set of such \emph{overlattices} $L$ satisfying a), b), c) (and ignoring Hodge structures) by $\calL(K,T)$.
    
    We denote the set of isomorphism classes of virtual Fourier--Mukai partners of $X$ with algebraic lattice $K$ by $VFM(X,K).$ 

    A virtual Fourier--Mukai partner of $X$ is a Hodge structure $L$ such that $L\in VFM(X,K)$ for some $K$ (up to isomorphism of Hodge structures). We denote this set $VFM(X)$.
\end{definition}
The conditions ensure that $L$ is isomorphic as a lattice to $\Lambda$. Further, if $K$ in addition contains no vectors $v\in K$ with $v^2=6$ and $div_L(v)=3,$ then there exists a unique cubic fourfold $Y$ with $H^4(Y,\bZ)_{prim}\cong L$ as Hodge structures. Thus, by applying Assumption \ref{assumption 3}, the set $VFM(X,K)$ contains the set of Hodge structures of Hodge-theoretic Fourier--Mukai partners of $X$.  

Note that there is a natural surjective map $\Pi: \calL(K,T)\rightarrow VFM(X,K)$,  where $\Pi(L)=\Pi(L')$ if there exists an isomorphism of Hodge structures $L\overset{\sim}\rightarrow L'.$

\begin{definition}\label{dGroupActing}
Let
\[
G_{K, T}:= O(K) \times \{\pm \mathrm{id}_T \}
\]
be the product group of the group of lattice isometries $O(K)$ of $K$ and the group $\bZ/2$ consisting of $\pm \mathrm{id}_T$ acting on $T$. Note that $G_{K, T}$ acts on $K\oplus T$ and on $K^*\oplus T^*$.
\end{definition}

\begin{lemma}
    Let $f:L\overset{\sim}\rightarrow L'$ be an isomorphism of Hodge structures with $L,L'\in \calL(K,T)$. Then $f$ determines an element in $\bar{f}\in G_{K,T}$. Conversely, an element $g\in G_{K,T}$ determines such an isomorphism of Hodge structures, where $L'=g(L)$.
\end{lemma}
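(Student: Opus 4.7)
The plan is to give an intrinsic characterization of the orthogonal decomposition $L\otimes \bQ = (K\otimes\bQ) \oplus (T\otimes \bQ)$ purely in terms of the Hodge structure on $L$, so that any Hodge isometry $f\colon L\to L'$ between two elements of $\calL(K,T)$ automatically respects it.

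First I would show that the set of classes in $L$ of pure Hodge type $(2,2)$ is precisely $K$. Writing $v\in L$ uniquely as $v=v_K+v_T$ with $v_K\in K^*$ and $v_T\in T^*$, the summand $v_K$ is always of type $(2,2)$ by construction, whereas by Assumption~\ref{assumption 1} (very generality) the Hodge structure on $T$ has trivial $(2,2)$-part over $\bQ$; otherwise $X$ itself would acquire an extra algebraic class, contradicting $A(X)=A$. Hence $v$ is of type $(2,2)$ iff $v_T=0$, and saturation of $K$ in $L$ then forces $v\in K$. Thus both $K$ and $T=K^{\perp_L}$ are determined by the Hodge structure alone, and any Hodge isometry $f\colon L\to L'$ must restrict to a lattice isometry $f|_K\in O(K)$ and to a Hodge isometry $f|_T$ of $T$.

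For the first half of the lemma, I would then invoke Assumption~\ref{assumption 1} together with Zarhin's theorem \cite{zarhin} to deduce $f|_T\in\{\pm\mathrm{id}_T\}$, and define $\bar f:=(f|_K, f|_T)\in G_{K,T}$. By $\bQ$-linear extension, $f$ coincides with the diagonal action of $\bar f$ on $K^*\oplus T^*$ restricted to $L$, and in particular $L'=\bar f(L)$. For the converse, any $g=(\phi,\pm\mathrm{id}_T)\in G_{K,T}$ acts diagonally on $K^*\oplus T^*$ as an isometry fixing $K\oplus T$ setwise; hence $L':=g(L)$ lies between $K\oplus T$ and $K^*\oplus T^*$ with the same index and discriminant as $L$, with $K$ and $T$ still saturated, so $L'\in\calL(K,T)$. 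The map $g\colon L\to L'$ is tautologically an isometry of lattices, and it preserves Hodge decompositions since $\phi$ acts as an isometry on the purely algebraic part $K$ (present in both $L$ and $L'$) while $\pm\mathrm{id}_T$ preserves the prescribed Hodge structure on $T$.

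The only step carrying any genuine content is the very first observation that $(T\otimes\bQ)^{2,2}=0$; this is forced by our very-generality hypothesis and pins down the algebraic sublattice of $L$ as $K$ on the nose. Everything else is then formal bookkeeping with Nikulin's overlattice framework of Section~\ref{SubSubSec:Gluing}.
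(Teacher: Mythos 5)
Your proof is correct and takes essentially the same route as the paper's: the paper's entire proof is that restricting $f$ to $K\oplus T$ gives $\bar{f}$ and that the converse is clear, which implicitly relies on exactly the facts you spell out, namely that $K$ is intrinsically the sublattice of integral $(2,2)$-classes of $L$ (using very generality and saturation), hence preserved by any Hodge isometry along with $T=K^{\perp_L}$, and that Assumption \ref{assumption 1} (Zarhin) forces $f|_T=\pm\id_T$. Your write-up simply makes these implicit steps explicit.
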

\begin{proof}
    Restricting the isomorphism $f$ to $K\oplus T$ gives the element $\bar{f}$. The converse is clear.
\end{proof}
We immediately obtain:
\begin{corollary}\label{c:Orbits}
    The number $|VFM(X, K)|$ of isomorphism classes of virtual Fourier--Mukai partners with algebraic lattice $K$ is equal to the number of $G_{K,T}$-orbits in $\calL(K,T).$
\end{corollary}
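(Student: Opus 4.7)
The proof is essentially a bookkeeping consequence of the preceding lemma, and the plan is to verify that the fibers of the natural surjection $\Pi\colon\calL(K,T)\to VFM(X,K)$ are exactly the $G_{K,T}$-orbits.

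First I would argue the easy containment: if $L' = g(L)$ for some $g = (g_K, \pm\id_T) \in G_{K,T}$, then $g$ extends uniquely to a lattice isomorphism of $K^*\oplus T^*$, which then restricts to a lattice isomorphism $L\to L'$. One checks this is a Hodge isometry: the $O(K)$ factor acts trivially on the Hodge decomposition of $K$ because every class of $K$ is of type $(2,2)$ by definition of the induced Hodge structure in condition \textit{a)}, and $\pm\id_T$ is visibly a Hodge isometry of $T$ since it preserves each summand $T^{p,q}$. Hence $\Pi(L)=\Pi(L')$.

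For the reverse direction, suppose $\Pi(L)=\Pi(L')$, so there is a Hodge isomorphism $f\colon L\isomto L'$. By the preceding lemma, $f$ restricts to an element $\bar f\in G_{K,T}$; here one uses that $K$ and $T$ are saturated in $L$ (condition \textit{b)}), so $K$ must go to $K$ (as the sublattice of algebraic classes) and $T$ to $T$ (as its orthogonal complement), together with Assumption \ref{assumption 1} to guarantee $f|_T \in\{\pm\id_T\}$. The key point that remains is to check that the original $f$ really agrees with the canonical extension of $\bar f$ to $K^*\oplus T^*$, so that $L' = \bar f(L)$ inside $K^*\oplus T^*$. This is immediate from the fact that $K\oplus T$ has finite index in $L$: any two $\bQ$-linear maps $L\otimes\bQ\to L'\otimes\bQ$ that coincide on $K\oplus T$ coincide everywhere, and both $f$ and the extension of $\bar f$ restrict to $\bar f$ on $K\oplus T$. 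Therefore $L$ and $L'$ lie in a common $G_{K,T}$-orbit.

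Combining the two directions shows that the fibers of $\Pi$ are precisely the $G_{K,T}$-orbits, and since $\Pi$ is surjective, this gives the claimed equality of cardinalities. There is no real obstacle here; the only mild subtlety is to be careful that the action of $G_{K,T}$ on $K\oplus T$ extends canonically (via the dual) to $K^*\oplus T^*$ and hence preserves $\calL(K,T)$ setwise, so that orbit-counting on $\calL(K,T)$ is well-defined.
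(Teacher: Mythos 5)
Your proposal is correct and follows the paper's own route: the paper derives this corollary immediately from the preceding lemma identifying Hodge isomorphisms between elements of $\calL(K,T)$ with elements of $G_{K,T}$, and your argument simply spells out the details of that derivation (that the fibers of $\Pi$ are exactly the $G_{K,T}$-orbits, using saturation, Assumption \ref{assumption 1}, and the finite-index trick to see that $f$ agrees with the extension of $\bar f$).
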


To count the total number of Fourier--Mukai partners of $X$, we now count and determine the possible lattices $K$.

\begin{definition}\label{d:List}
    Let $\mathscr{K}=\{K_i\}_{i=1}^n$ be a list of all lattices (up to isometry) that occur as $T^\perp\subset \Lambda$ for some primitive embedding $T\hookrightarrow \Lambda$ and have the property that there is no vector $v\in K_i$ with $v^2=2$.
\end{definition}

It follows that $VFM(X)=\bigsqcup_{K\in \mathscr{K}} VFM(X,K)$.

\begin{remark}
    Note that not every lattice $K\in\mathscr{K}$ can occur as the primitive algebraic lattice of a cubic fourfold. Indeed, $K$ occurs if there exists an embedding $K\hookrightarrow \Lambda$ such that every class $v\in K$ with $v^2=6$ has divisibility 1 in $\Lambda$. Unfortunately, it is possible for one $K$ to admit several embeddings into $\Lambda,$ some with this property and some without. This is why our current count is `virtual'. We explain how to extract the actual count in Section \ref{sec: actual count}
\end{remark}

\begin{definition}\label{d:isotropicSubgroupsDiscriminantGroup}
 Let $\calH_{K, T}$ be the set of isotropic subgroups
\[
H \subset D(K)\oplus D(T)
\]
such that the projections $p_K\colon H \to D(K)$ and $p_T \colon H \to D(T)$ are embeddings and 
\[
H^{\perp}/H \simeq \bZ/3.
\]
\end{definition}

Note that by the theory recalled in Subsection \ref{SubSubSec:Gluing} and more specifically Theorem \ref{theorem:glueing}, elements $L$ in $\calL(K, T)$ correspond bijectively to 
elements of $\calH_{K, T}$ via $L\mapsto H_L:= L/(K\oplus T)$. 
Further, we see that $G_{K, T}$ acts naturally on $\calH_{K, T}$. There is also a natural homomorphism
\[
\rho \colon G_{K, T} \to O(D(K)) \times \{ \pm \mathrm{id}_{D(T)} \}.
\]
We denote $O(D(K)) \times \{ \pm \mathrm{id}_{D(T)}\}$ by $\overline{G}_{K, T}.$ We can summarize this discussion in the following proposition:

\begin{proposition}\label{pCount2}
The number $|VFM(X,K)|$ of virtual Fourier--Mukai partners (up to isomorphism), with primitive algebraic lattice isomorphic to $K$, of a cubic fourfold $X$ is given by the number of $G_{K,T}$-orbits in $\calH_{K, T}$. A lower bound for this number is given by the number of $\overline{G}_{K, T}$-orbits in $\calH_{K, T}$. 

The number of $\overline{G}_{K, T}$-orbits in $\calH_{K, T}$ equals the number $|VFM(X,K) |$ if the homomorphism $\rho \colon G_{K, T} \to \overline{G}_{K, T}$ is surjective. 
\end{proposition}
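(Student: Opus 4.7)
The plan is to identify the set $\calL(K, T)$ with $\calH_{K, T}$ in a $G_{K, T}$-equivariant way, and then to examine how the action descends to the discriminant groups.

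First, I would invoke Nikulin's overlattice correspondence (Theorem \ref{theorem:glueing}) to set up the map
\[
\calL(K, T) \longrightarrow \calH_{K, T}, \qquad L \longmapsto H_L := L/(K\oplus T).
\]
For any $L$ satisfying conditions a), b), c) of the definition of $VFM(X, K)$, the subgroup $H_L \subset D(K)\oplus D(T)$ is isotropic with respect to $q_K \oplus q_T$ (because $L$ is even), its projections to $D(K)$ and $D(T)$ are embeddings (because $K$ and $T$ are saturated in $L$), and $H_L^{\perp}/H_L \simeq D(L)\simeq \bZ/3$ (because $L$ has discriminant $3$). Conversely, given $H\in \calH_{K, T}$, its preimage in $K^{*}\oplus T^{*}$ is an element of $\calL(K, T)$. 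This establishes the bijection $\calL(K, T) \leftrightarrow \calH_{K, T}$.

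Next I would check that this bijection is $G_{K, T}$-equivariant. Any $g = (g_K, \pm\mathrm{id}_T) \in G_{K, T}$ extends $\bQ$-linearly to an isometry of $(K\oplus T)\otimes \bQ$ that preserves both $K\oplus T$ and $K^{*}\oplus T^{*}$. Hence $g(L) \in \calL(K, T)$ whenever $L\in \calL(K, T)$, and
\[
g(L)/(K\oplus T) \; =\; \overline{g}\bigl(L/(K\oplus T)\bigr),
\]
where $\overline{g}$ is the induced automorphism of $D(K)\oplus D(T)$. Combining this equivariance with Corollary \ref{c:Orbits}, which identifies $|VFM(X, K)|$ with the number of $G_{K, T}$-orbits on $\calL(K, T)$, gives the first assertion: $|VFM(X, K)|$ equals the number of $G_{K, T}$-orbits on $\calH_{K, T}$.

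Finally I would analyse how this action descends. By construction, the action of $G_{K, T}$ on $D(K)\oplus D(T)$ factors through the homomorphism $\rho \colon G_{K, T}\to \overline{G}_{K, T}$, since the induced automorphism of the discriminant group only depends on $\rho(g)$. Therefore the $G_{K, T}$-orbits on $\calH_{K, T}$ coincide with the orbits of the image $\rho(G_{K, T}) \subset \overline{G}_{K, T}$. Since $\rho(G_{K, T}) \subset \overline{G}_{K, T}$, every $\overline{G}_{K, T}$-orbit is a union of $\rho(G_{K, T})$-orbits, so the number of $\overline{G}_{K, T}$-orbits is at most the number of $G_{K, T}$-orbits, yielding the lower bound. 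If $\rho$ is surjective, then $\rho(G_{K, T}) = \overline{G}_{K, T}$, and the two orbit counts agree, giving the last statement.

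The main conceptual point (and the only nontrivial verification) is the $G_{K, T}$-equivariance of the Nikulin correspondence; the remainder is bookkeeping about orbits under a homomorphism of acting groups. The fact that we must pass to $\overline{G}_{K, T}$-orbits as a lower bound, rather than obtain equality directly, reflects precisely that distinct elements of $G_{K, T}$ may induce the same automorphism of the discriminant form, so the quotient by the kernel of $\rho$ causes no loss of information on $\calH_{K, T}$, but the image may be strictly smaller than $\overline{G}_{K, T}$.
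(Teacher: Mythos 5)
Your proposal is correct and follows essentially the same route as the paper: the paper establishes the $G_{K,T}$-equivariant bijection $\calL(K,T)\leftrightarrow\calH_{K,T}$ via Theorem \ref{theorem:glueing} in the discussion preceding the proposition, invokes Corollary \ref{c:Orbits}, and then notes that every $G_{K,T}$-orbit lies in a $\overline{G}_{K,T}$-orbit. Your write-up merely makes explicit the two points the paper leaves implicit (the equivariance of the Nikulin correspondence and the fact that the action on $\calH_{K,T}$ factors through $\rho$), which is a faithful, slightly more detailed version of the same argument.
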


\begin{proof}
The number of $\overline{G}_{K, T}$-orbits in $\calH_{K, T}$ provides a lower bound because every $G_{K, T}$-orbit is contained in a $\overline{G}_{K, T}$-orbit. The last assertion is then clear. 
\end{proof}

We now establish general results for the number of $\overline{G}_{K, T}$-orbits in $\calH_{K, T}$ under the following simplifying assumption:

\begin{assumption}\label{a:CountViaOrthGroups}
The discriminant groups $D(K)$ and $D(T)$ have the following forms:
\begin{enumerate}
\item 
\textbf{Case 1.}
\begin{align*}
    D(K)= & (\bZ/3)^r \oplus \bigoplus_{j=1}^N A_{p_j},\\
    D(T) = & (\bZ/3)^{r+1} \oplus \bigoplus_{j=1}^N A_{p_j}
\end{align*}
where $r\in \mathbb{N}$ and $p_j\neq 3$ are primes, $A_{p_j}$ is a finite group of order a power of $p_j$.
\item 
\textbf{Case 2.}
\begin{align*}
    D(K)= & (\bZ/3)^r \oplus \bigoplus_{j=1}^N A_{p_j},\\
    D(T) = & (\bZ/3)^{r-1} \oplus \bigoplus_{j=1}^N A_{p_j}
\end{align*}
where $r\in \mathbb{N}$ and $p_j\neq 3$ are primes, $A_{p_j}$ is a finite group of order a power of $p_j$.
\end{enumerate}
Moreover, the $3$-parts of the discriminant quadratic forms
\[
q_{K, 3}  \colon D(K)_3 \to \bQ/(2\bZ), \quad q_{T, 3}  \colon D(T)_3 \to \bQ/(2\bZ)
\]
are non-degenerate. 
\end{assumption}

Using the theory in Subsection \ref{SubSubSec:FiniteQuadratic}, we can view $q_{K, 3}$ and $q_{T, 3}$ as quadratic forms with values in $\bF_3$ on the $\bF_3$-vector spaces $D(K)_3$ and $D(T)_3$. 
We now explain how one can compute the number of 
$\overline{G}_{K,T}$-orbits in $\calH_{K, T}$ using the theory recalled in Subsection \ref{Subsec:FiniteOrth}. 

We start with Case 1 of Assumption \ref{a:CountViaOrthGroups}. Note that the quadratic form $q_K$ on $D(K)=  (\bZ/3)^r \oplus \bigoplus_{j=1}^N A_{p_j}$ splits as a direct sum
\[
q_K = q_{K, 3} \oplus \bigoplus_{j=1}^N q_{K, p_j}
\]
and on $D(T) =  (\bZ/3)^{r+1} \oplus \bigoplus_{j=1}^N A_{p_j}$ the quadratic form $q_T$ then has the form
\[
q_T = q_{T, 3} \oplus \bigoplus_{j=1}^N (-q_{K, p_j}). 
\]
Accordingly, $\overline{G}_{K, T}$ splits as
\[
\overline{G}_{K, T} = \Bigl( \mathrm{O}(D(K)_3, q_{K, 3}) \times \bigtimes_{j=1}^N \mathrm{O}(D(K)_{p_j}, q_{K, p_j}) \Bigr) \times \{ \pm \mathrm{id}_{D(T)}\}.
\]
The isotropic subgroups $H_L\subset D(K) \times D(T)$ in the set $\calH_{K, T}$ in this case are precisely given by graphs $\Gamma_{\alpha}$ of injective anti-isometries $\alpha \colon D(K) \to D(T)$. Since we want to count the number of orbits of such graphs for the action of $\overline{G}_{K, T}$, the above discussion yields the following:

\begin{proposition}\label{p:NumberGraphsCase1}
In Case 1 of Assumption \ref{a:CountViaOrthGroups}, the number of $\overline{G}_{K, T}$-orbits in $\calH_{K, T}$ is given by the number of $\mathrm{O}(V)$-orbits of graphs of isometries (not anti-isometries!) 
\[
\iota \colon V \to W
\]
where $V$ is the regular orthogonal vector space $(\bF_3^r, q_{K, 3})$ and $W$ is the regular orthogonal vector space $(\bF_3^{r+1}, -q_{T,3})$ (note the minus sign here in front of $q_{T,3}$). 
\end{proposition}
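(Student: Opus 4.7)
The plan is to parametrize $\calH_{K, T}$ by tuples of anti-isometries coming from the primary decompositions of $D(K)$ and $D(T)$, translate the $\overline{G}_{K, T}$-action into this parametrization, and then absorb the non-$3$-primary factors and the sign factor into the $\mathrm{O}(V)$-action on the set of isometries $V \to W$.

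For the parametrization, any $H \in \calH_{K,T}$ satisfies
\[
|H|^2 = \frac{|D(K)||D(T)|}{|H^\perp/H|} = \frac{|D(K)||D(T)|}{3} = |D(K)|^2
\]
by the explicit orders in Assumption \ref{a:CountViaOrthGroups}, so $p_K \colon H \to D(K)$ is a bijection and $H = \Gamma_\alpha$ is the graph of a unique injective homomorphism $\alpha := p_T \circ p_K^{-1}$, with isotropy translating to the anti-isometry condition $q_T(\alpha(x)) = -q_K(x)$. Splitting $\alpha = \alpha_3 \oplus \bigoplus_j \alpha_{p_j}$ along primary components, for $p_j \neq 3$ the anti-isometry condition between $(A_{p_j}, q_{K,p_j})$ and $(A_{p_j}, -q_{K,p_j})$ reads exactly $\alpha_{p_j} \in \mathrm{O}(A_{p_j}, q_{K,p_j})$, and for $p_j = 3$ setting $\iota := \alpha_3$ the condition $q_{T,3}(\iota(x)) = -q_{K,3}(x)$ says precisely that $\iota\colon V \to W$ is an isometry of regular $\bF_3$-orthogonal spaces. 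A direct computation then shows that $(\phi, \pm \mathrm{id}_{D(T)}) \in \overline{G}_{K, T}$ with $\phi = (\phi_3, (\phi_{p_j}))$ sends $\Gamma_\alpha$ to $\Gamma_{\pm \alpha \phi^{-1}}$, i.e.\ acts component-wise as
\[
(\iota, (\alpha_{p_j})_j) \;\longmapsto\; \bigl( \pm\, \iota \circ \phi_3^{-1},\; (\pm\, \alpha_{p_j} \circ \phi_{p_j}^{-1})_j \bigr).
\]

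To conclude I would carry out two reductions. First, for every fixed $\epsilon \in \{\pm 1\}$ and any target $\alpha_{p_j}'$ the equation $\alpha_{p_j}' = \epsilon\, \alpha_{p_j} \circ \phi_{p_j}^{-1}$ admits the unique solution $\phi_{p_j} = (\alpha_{p_j}')^{-1} \circ (\epsilon\, \alpha_{p_j}) \in \mathrm{O}(A_{p_j}, q_{K, p_j})$, using that $\epsilon \cdot \mathrm{id}$ lies in $\mathrm{O}(A_{p_j}, q_{K, p_j})$; so the projection $(\iota, (\alpha_{p_j})) \mapsto \iota$ induces a bijection from $\overline{G}_{K, T}$-orbits on $\calH_{K,T}$ to $(\mathrm{O}(V) \times \{\pm \mathrm{id}\})$-orbits on $\Isom(V, W)$ under the action $(\phi_3, \epsilon) \cdot \iota = \epsilon\, \iota \circ \phi_3^{-1}$. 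Second, the identity $\epsilon\, \iota \circ \phi_3^{-1} = \iota \circ (\epsilon\, \phi_3)^{-1}$ together with $-\mathrm{id}_V \in \mathrm{O}(V)$ shows that $\{\pm \mathrm{id}\}$ is already realized inside $\mathrm{O}(V)$, yielding the stated count. The main obstacle I anticipate is the careful bookkeeping in this final reduction --- in particular, verifying that the two orbit identifications compose without introducing spurious stabilizers --- but this is essentially formal once the free-transitivity of the $\prod_j \mathrm{O}(A_{p_j})$-action on the $(\alpha_{p_j})$-component and the presence of $-\mathrm{id}_V$ in $\mathrm{O}(V)$ are recorded.
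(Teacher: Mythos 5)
Your proposal is correct and follows essentially the same route as the paper: parametrize $\calH_{K,T}$ by graphs of injective anti-isometries $D(K)\to D(T)$, split along primary components so that the non-$3$ parts contribute simply transitive $\mathrm{O}(A_{p_j})$-actions, and reduce to $\mathrm{O}(V)$-orbits of isometries $V\to W$. The only cosmetic difference is in handling the sign: the paper observes that $(-\mathrm{id}_{D(K)},-\mathrm{id}_{D(T)})$ acts trivially on every graph, while you absorb $\epsilon$ into $\mathrm{O}(V)$ via $-\mathrm{id}_V$ and into the $\phi_{p_j}$'s --- these are equivalent, and your write-up usefully fills in the order computation and transitivity details the paper leaves implicit.
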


\begin{proof}
The only thing left to notice is that $\overline{G}_{K, T}$ contains the element $(-\mathrm{id}_K, -\mathrm{id}_T)$ which acts trivially on the graph $\Gamma_{\alpha}$ of every anti-isometry. Hence two such graphs are in the same $\overline{G}_{K, T}$-orbit if and only if they are in the same $\mathrm{O}(D(K))$-orbit. The rest is then clear from the preceding discussion. 
\end{proof}

\begin{proposition}\label{p:CountCase1}
In Case 1 of Assumption \ref{a:CountViaOrthGroups}, the number of $\overline{G}_{K, T}$-orbits in $\calH_{K, T}$ is given by $\frac{|\mathrm{O}(W)|}{2 |\mathrm{O}(V)|}.$
\end{proposition}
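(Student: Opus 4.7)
The plan is to reduce the orbit count to Proposition \ref{p:CountingIsometries} via the reformulation already provided by Proposition \ref{p:NumberGraphsCase1}. By that proposition, the number of $\overline{G}_{K,T}$-orbits in $\calH_{K,T}$ equals the number of $\mathrm{O}(V)$-orbits on the set $\mathcal{I}$ of injective isometries $\iota : V \to W$, where $V = (\mathbb{F}_3^r, q_{K,3})$ and $W = (\mathbb{F}_3^{r+1}, -q_{T,3})$, and $\mathrm{O}(V)$ acts by precomposition $\iota \mapsto \iota \circ g^{-1}$.

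First I would invoke Proposition \ref{p:CountingIsometries}. Both $V$ and $W$ are regular quadratic spaces over $\mathbb{F}_3$: nondegeneracy of $q_{K,3}$ and $q_{T,3}$ is part of Assumption \ref{a:CountViaOrthGroups}, and negating a nondegenerate form leaves it nondegenerate. Since $\dim W = \dim V + 1$, the proposition yields
$$|\mathcal{I}| = \frac{|\mathrm{O}(W)|}{2}.$$
Next I would check that the $\mathrm{O}(V)$-action on $\mathcal{I}$ is free: if $\iota \circ g^{-1} = \iota$ for some injective $\iota \in \mathcal{I}$, then $g$ acts as the identity on $V$, so $g = \mathrm{id}_V$. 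Hence every $\mathrm{O}(V)$-orbit has size exactly $|\mathrm{O}(V)|$, and dividing produces
$$\frac{|\mathcal{I}|}{|\mathrm{O}(V)|} = \frac{|\mathrm{O}(W)|}{2\,|\mathrm{O}(V)|},$$
the asserted formula.

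There is no genuine obstacle in the present statement; the real work has been absorbed upstream into Proposition \ref{p:NumberGraphsCase1} (which collapses the splitting into $p$-components and the $\pm \mathrm{id}_{D(T)}$-ambiguity into a single orbit problem on the $3$-parts) and into Witt's extension theorem, which underlies the counting formula in Proposition \ref{p:CountingIsometries}. The remaining step is thus a clean orbit--stabilizer computation, whose only mild verification is the freeness of the $\mathrm{O}(V)$-action, settled by injectivity of $\iota$.
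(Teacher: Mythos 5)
Your proof is correct and is exactly the combination the paper intends: its proof of Proposition \ref{p:CountCase1} is literally ``Combine Proposition \ref{p:NumberGraphsCase1} with Proposition \ref{p:CountingIsometries},'' and your write-up supplies precisely the missing details (regularity of $V$ and $W$, the count $|\mathrm{O}(W)|/2$ of injective isometries, and freeness of the precomposition action of $\mathrm{O}(V)$, which turns the orbit count into the stated quotient). No gaps; this matches the paper's argument.
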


\begin{proof}
Combine Proposition \ref{p:NumberGraphsCase1} with Proposition \ref{p:CountingIsometries}. 
\end{proof}

In Case 2 of Assumption \ref{a:CountViaOrthGroups}, almost everything said above goes through, but the isotropic subgroups $H_L\subset D(K) \times D(T)$ in the set $\calH_{K, T}$ are now given by graphs $\Gamma_{\alpha}$ of injective anti-isometries $\alpha \colon D(T) \to D(K)$ (source and target interchanged compared to Case 1). Thus we get

\begin{proposition}\label{p:NumberGraphsCase2}
In Case 2 of Assumption \ref{a:CountViaOrthGroups}, the number of $\overline{G}_{K, T}$-orbits in $\calH_{K, T}$ is given by the number of $\mathrm{O}(W)$-orbits of graphs of isometries
\[
\iota \colon V \to W
\]
where $V$ is the regular orthogonal vector space $(\bF_3^{r-1}, -q_{T, 3})$ and $W$ is the regular orthogonal vector space $(\bF_3^{r}, q_{K,3})$. 
\end{proposition}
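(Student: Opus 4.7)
\emph{Proof proposal.} The plan is to mirror the argument of Proposition \ref{p:NumberGraphsCase1} with the roles of $D(K)$ and $D(T)$ interchanged. For an isotropic subgroup $H \in \calH_{K,T}$ with both projections injective, the identity $|H^\perp/H| = |D(K) \oplus D(T)|/|H|^2 = 3$ combined with the prime decomposition of $D(K)$ and $D(T)$ forces $|H_p| = |D(K)_p| = |D(T)_p|$ for $p \neq 3$ and $|H_3| = 3^{r-1} = |D(T)_3|$. In Case 2, this means $p_T \colon H \to D(T)$ is an isomorphism while $p_K \colon H \hookrightarrow D(K)$ is a proper embedding on the $3$-part. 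Thus every $H \in \calH_{K,T}$ is the graph of a uniquely determined injective anti-isometry $\alpha := p_K \circ p_T^{-1} \colon D(T) \hookrightarrow D(K)$, the anti-isometry property coming from the isotropic condition $q_K(\alpha(t)) + q_T(t) \equiv 0 \pmod{2\bZ}$.

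Next, exactly as in Case 1, the element $(-\mathrm{id}_{D(K)}, -\mathrm{id}_{D(T)}) \in \overline{G}_{K,T}$ sends $\Gamma_\alpha$ to $\Gamma_{-\alpha}$, which also equals $(-\mathrm{id}_{D(K)}) \cdot \Gamma_\alpha$ under postcomposition. Hence the $\overline{G}_{K,T}$-orbit of $\Gamma_\alpha$ coincides with its $O(D(K))$-orbit, where $O(D(K))$ acts by $\alpha \mapsto g \circ \alpha$. Since $O(D(K)) = O(D(K)_3) \times \prod_j O(D(K)_{p_j})$ respects the prime decomposition and any $\alpha$ splits along it, the orbit problem decouples prime by prime.

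For each prime $p_j \neq 3$, $D(T)_{p_j}$ and $D(K)_{p_j}$ share the same underlying group $A_{p_j}$ with forms differing by a sign (in analogy with the Case 1 discussion), so a bijective anti-isometry is the same datum as an element of $O(D(K)_{p_j})$; the $O(D(K)_{p_j})$-action by postcomposition is transitive on itself, giving exactly one orbit per non-$3$ prime. For $p = 3$, absorbing the sign onto the source identifies injective anti-isometries $D(T)_3 \to D(K)_3$ with injective isometries $\iota \colon V = (\bF_3^{r-1}, -q_{T,3}) \hookrightarrow W = (\bF_3^r, q_{K,3})$, and the $O(D(K)_3) = O(W)$-action by postcomposition is precisely the action on the graphs $\Gamma_\iota \subset V \oplus W$. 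Combining the prime contributions yields the claimed count.

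The only genuinely new bookkeeping compared to Case 1 is the first step: one must carefully verify that in Case 2 the larger $3$-part of $D(K)$ forces $p_T$ (rather than $p_K$) to be the isomorphism, so that the graphs now describe anti-isometries from $D(T)$ into $D(K)$; once this directional point is nailed down, the Witt-style argument for $p\neq 3$ and the translation to graphs of isometries $V\hookrightarrow W$ for $p=3$ are verbatim adaptations of the Case 1 proof. Subsequently feeding this into Proposition \ref{p:CountingIsometries} will produce the analogue of Proposition \ref{p:CountCase1} for Case 2.
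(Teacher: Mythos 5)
Your proposal is correct and takes essentially the same approach as the paper, which obtains Case 2 by rerunning the Case 1 discussion with the source and target of the anti-isometries interchanged; your order count showing that $p_T$ (not $p_K$) is now the isomorphism is precisely the detail the paper leaves implicit, and your prime-by-prime decoupling matches the paper's reduction to the $3$-part. One small correction: $(-\mathrm{id}_{D(K)}, -\mathrm{id}_{D(T)})$ acts \emph{trivially} on every graph $\Gamma_\alpha$ (it does not send $\Gamma_\alpha$ to $\Gamma_{-\alpha}$); the $\{\pm \mathrm{id}_{D(T)}\}$ factor is instead absorbed because $(\mathrm{id}_{D(K)}, -\mathrm{id}_{D(T)})\cdot \Gamma_\alpha = \Gamma_{-\alpha} = (-\mathrm{id}_{D(K)}, \mathrm{id}_{D(T)})\cdot \Gamma_\alpha$, so your conclusion that the $\overline{G}_{K,T}$-orbits coincide with the $\mathrm{O}(D(K))$-orbits is unaffected.
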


\begin{proposition}\label{p:CountCase2}
In Case 2 of Assumption \ref{a:CountViaOrthGroups}, the number of $\overline{G}_{K, T}$-orbits in $\calH_{K, T}$ is equal to $1$. 
\end{proposition}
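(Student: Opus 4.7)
The plan is to show that Witt's extension theorem reduces the count to a single orbit, essentially reversing the roles of the two groups compared to Case 1.

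By Proposition \ref{p:NumberGraphsCase2}, it suffices to count the number of $\mathrm{O}(W)$-orbits on graphs $\Gamma_\iota \subset V \oplus W$ of injective isometries $\iota \colon V \hookrightarrow W$, where $V = (\bF_3^{r-1}, -q_{T,3})$ and $W = (\bF_3^r, q_{K,3})$ are regular orthogonal $\bF_3$-vector spaces (regularity being guaranteed by the nondegeneracy clause of Assumption \ref{a:CountViaOrthGroups}). Since the assignment $\iota \mapsto \Gamma_\iota$ is injective, and $\sigma \in \mathrm{O}(W)$ sends $\Gamma_\iota$ to $\Gamma_{\sigma\circ \iota}$, this is the same as counting $\mathrm{O}(W)$-orbits on the set of injective isometries $V \hookrightarrow W$ under post-composition.

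First I would verify that at least one such injective isometry exists. This is exactly the existence half of Proposition \ref{p:CountingIsometries}: by the classification recalled in Proposition \ref{p:IsomClassesFiniteQuadratic} and the identity $\langle 1,1\rangle \simeq \langle \epsilon,\epsilon\rangle$ over $\bF_3$, one can always write $W \simeq V \perp \langle a \rangle$ for a suitable $a\in \bF_3^*$, which yields an injective isometry $V \hookrightarrow W$. Then I would invoke Witt's extension theorem: any isometry between two subspaces of a regular quadratic space over $\bF_3$ extends to a global isometry of $W$. Applied to two injective isometries $\iota, \iota' \colon V \hookrightarrow W$, this produces $\sigma \in \mathrm{O}(W)$ with $\sigma\circ \iota = \iota'$. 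Hence $\mathrm{O}(W)$ acts transitively on the set of injective isometries, giving exactly one orbit.

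No obstacle is expected here: the entire content is that in Case 2 the acting group $\mathrm{O}(W)$ is large enough (it acts on the codomain of $\iota$) to absorb all choices, whereas in Case 1 the acting group $\mathrm{O}(V)$ only acts on the domain and thus yields the nontrivial count $|\mathrm{O}(W)|/(2|\mathrm{O}(V)|)$. The only mild check is existence of at least one isometry $V \hookrightarrow W$, which is immediate from the $\bF_3$-structure theory already cited in the proof of Proposition \ref{p:CountingIsometries}.
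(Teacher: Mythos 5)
Your proof is correct and is essentially the argument the paper intends: the paper states this proposition without an explicit proof because it follows, exactly as you argue, from Proposition \ref{p:NumberGraphsCase2} combined with the Witt-extension-theorem transitivity already used in the proof of Proposition \ref{p:CountingIsometries} (existence of an embedding $V \hookrightarrow W$ plus a single $\mathrm{O}(W)$-orbit). Your additional care in checking that graphs correspond bijectively to isometries under post-composition, and that regularity of $V$ and $W$ follows from the nondegeneracy clause of Assumption \ref{a:CountViaOrthGroups}, fills in the details the paper leaves implicit.
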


\section{From virtual to actual count}\label{sec: actual count}
From the previous section, we are able to count the number of virtual Fourier--Mukai Partners $|VFM(X)|$ of a cubic fourfold $X$ under Assumptions \ref{assumption 1}, \ref{assumption 3}, and \ref{a:CountViaOrthGroups}. 
Each element $[L]\in VFM(X)$ is abstractly isomorphic to the lattice $\Lambda$, with a Hodge structure such that $T(L)\cong T(X),$ and by Assumption \ref{assumption 3} this Hodge isometry extends to one of $H\cong \tilde{H}(\Ku(X),\bZ).$ 

It follows that provided there exists a cubic fourfold $Y$ such that $H^4(Y,\bZ)_{prim}\cong L$ as Hodge structures, then $Y$ is an honest Hodge theoretic Fourier--Mukai partner of $X$. Unfortunately, this may not happen for all $K\in \calK$, cf. Remark \ref{r:DifferenceVirtualActual}.  
Here, we explain how to extract the true count of Hodge theoretic Fourier--Mukai Partners from the virtual count.

Recall that $VFM(X)=\bigsqcup_{K\in \mathscr{K}} VFM(X,K)$. Let $HFM(X,K)$ denote the set of Hodge theoretic Fourier--Mukai partners of $X$ with algebraic primitive lattice isomorphic to $K$.

\begin{proposition}\label{pPrimitiveAlgebraicFMPartner}
Let $Y\in HFM(X).$ Then $Y\in HFM(X,K)$ for some $K\in \calK$. Further, there is an injective map $HFM(X,K)\rightarrow VFM(X,K).$
\end{proposition}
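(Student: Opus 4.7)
The plan is to prove the two assertions separately, with the second hinging on a careful definition of the map followed by the Global Torelli theorem for cubic fourfolds.

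For the first assertion, take $Y\in HFM(X)$, so we have a Hodge isometry $\tH(X)\cong \tH(Y)$. Since the transcendental sublattice of a Hodge structure of $K3$ type is intrinsically defined as the orthogonal complement of the integral $(2,2)$-classes, this restricts to a Hodge isometry of transcendental parts, and under the standard identification of the transcendental part of $\tH(X)$ with $T(X)$ this yields a Hodge isometry $T(Y)\cong T$. Then $A(Y)_{prim}= T(Y)^{\perp}\subset H^4(Y,\bZ)_{prim}\cong \Lambda$ realizes $A(Y)_{prim}$ as the orthogonal complement of a primitive embedding of $T$ into $\Lambda$, and smoothness of $Y$ rules out classes of square $2$ in $A(Y)_{prim}$ by \cite[Thm. 1.1]{laz10}. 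Thus $A(Y)_{prim}\cong K$ for some $K\in \mathscr{K}$ by Definition \ref{d:List}, i.e. $Y\in HFM(X,K)$.

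For the second, I would define $\Phi\colon HFM(X,K)\to VFM(X,K)$ by choosing, for each $[Y]$, a lattice identification $A(Y)_{prim}\cong K$ and a Hodge isometry $T(Y)\cong T$ extracted from $\tH(X)\cong \tH(Y)$, and then setting $\Phi([Y])$ to be the class of $L_Y := H^4(Y,\bZ)_{prim}$, viewed as an overlattice of $K\oplus T$ inside $K^{*}\oplus T^{*}$. The three defining conditions of a virtual Fourier--Mukai partner are then immediate: $K$ and $T$ are saturated in $L_Y$ by primitivity of the algebraic and transcendental sublattices in the primitive cohomology; the Hodge structure on $L_Y$ is exactly the one induced by declaring $K$ algebraic together with the given Hodge structure on $T$ (since $A(Y)_{prim}$ is the $(2,2)$-part of $H^4(Y,\bZ)_{prim}$ and $T(Y)$ is its orthogonal complement); and $L_Y\cong \Lambda$ has discriminant $3$. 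Well-definedness of $\Phi$ reduces to observing that two choices of the above identifications differ by an element of $G_{K,T}=\mathrm{O}(K)\times\{\pm\id_T\}$ from Definition \ref{dGroupActing}: lattice isometries $A(Y)_{prim}\cong K$ differ by an element of $\mathrm{O}(K)$, while Hodge isometries $T(Y)\cong T$ differ by $\pm\id_T$ thanks to Assumption \ref{assumption 1} and Zarhin's theorem.

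Injectivity is then a formal consequence of the Global Torelli theorem for cubic fourfolds of Voisin. Indeed, if $\Phi([Y_1])=\Phi([Y_2])$, then $L_{Y_1}$ and $L_{Y_2}$ lie in the same $G_{K,T}$-orbit in $\calL(K,T)$, which produces a Hodge isometry $H^4(Y_1,\bZ)_{prim}\cong H^4(Y_2,\bZ)_{prim}$; Torelli then delivers $Y_1\cong Y_2$. The main subtle point is the well-definedness of $\Phi$ --- specifically, unpacking that the ambiguity in choosing identifications is exactly absorbed by the $G_{K,T}$-action, which crucially relies on Assumption \ref{assumption 1}. Once this is in place, passing from an abstract Hodge isometry of primitive cohomology to an actual geometric isomorphism is routine.
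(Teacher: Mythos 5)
Your proof follows essentially the same route as the paper: restrict the Addington--Thomas Hodge isometry to the transcendental lattices, embed $T\cong T(Y)$ primitively into $H^4(Y,\bZ)_{prim}\cong\Lambda$, invoke \cite[Thm. 1.1]{laz10} to exclude square-2 vectors so that $A(Y)_{prim}$ lies in $\mathscr{K}$, and then map $Y$ to the class of $L_Y=H^4(Y,\bZ)_{prim}$ in $VFM(X,K)$, with injectivity supplied by the global Torelli theorem. Your explicit check of well-definedness --- that the ambiguity in choosing the identifications $A(Y)_{prim}\cong K$ and $T(Y)\cong T$ is exactly absorbed by the $G_{K,T}$-action, using Assumption \ref{assumption 1} --- is left implicit in the paper (it is built into $VFM(X,K)$ being a set of $G_{K,T}$-orbits), but it is the same argument, just spelled out.
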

\begin{proof}
    Let $Y\in HFM(X).$ Then there exists a Hodge isometry between Addington-Thomas lattices $\widetilde{H}(X)\cong \widetilde{H}(Y),$ restricting to a Hodge isometry $T(X)\cong T(Y).$ 
    Then there is a primitive embedding $$T\cong T(Y)\hookrightarrow H^4_{prim}(Y,\bZ)\cong \Lambda,$$ and $A_{prim}(Y)\cong (T^\perp)\subset \Lambda.$ By \cite[Thm. 1.1]{laz10}, the lattice $A_{prim}(Y)$ does not contain a vector $v$ with $v^2=2$.
    It follows that $A_{prim}(Y)$ is isomorphic to a lattice $K$ appearing in the list $\mathscr{K},$ and $Y\in HFM(X,K).$

    Further, setting $L:=H^4(Y,\bZ)_{prim},$ we see that $[L]\in VFM(X,K);$ this defines the desired map. The global Torelli theorem asserts that this is injective.
\end{proof}

We wish to find conditions on $K$ such that the injection $HFM(X,K)\rightarrow VFM(X,K)$ is surjective, guaranteeing our virtual count is an actual count. One immediately sees the following:

\begin{lemma}\label{lem: no square 6}
    Let $K\in\mathscr{K}$ such that there are no vectors $v\in K$ with $v^2=6$
    and $div_K(v)=3$ or $div_K(v)=6$. Then $|HFM(X,K)|=|VFM(X,K)|$.
\end{lemma}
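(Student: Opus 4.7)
The plan is to show that the injection $HFM(X,K) \hookrightarrow VFM(X,K)$ from Proposition \ref{pPrimitiveAlgebraicFMPartner} is surjective under the stated divisibility hypothesis on $K$. Given a virtual partner $[L]\in VFM(X,K)$, I would invoke the criterion of \cite[Thm.~1.1]{laz10} to produce a smooth cubic fourfold $Y$ with $H^4(Y,\bZ)_{prim}\cong L$ as Hodge structures, and then verify that $Y\in HFM(X,K)$ maps to $[L]$.

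First I would check Laza's two obstructions for the Hodge structure on $L$. Since $K\in\mathscr{K}$, Definition \ref{d:List} guarantees $K$ contains no vector of square $2$. It remains to rule out $v\in K$ with $v^2=6$ and $\mathrm{div}_L(v)=3$. For this the key elementary observation is that, because $K\subset L$, the inclusion of pairings gives
\[
\mathrm{div}_L(v)\bZ \supseteq \mathrm{div}_K(v)\bZ, \quad\text{hence}\quad \mathrm{div}_L(v)\,\bigm|\,\mathrm{div}_K(v),
\]
and on the other hand $v^2=(v,v)\in(v,K)=\mathrm{div}_K(v)\bZ$ forces $\mathrm{div}_K(v)\mid 6$. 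Therefore $\mathrm{div}_L(v)=3$ combined with $v^2=6$ forces $\mathrm{div}_K(v)\in\{3,6\}$. The hypothesis of the lemma excludes this case, so Laza's criterion is satisfied for $L$.

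Next, by \cite[Thm.~1.1]{laz10}, there exists a smooth cubic fourfold $Y$ together with a Hodge isomorphism $H^4(Y,\bZ)_{prim}\cong L$. Since $K$ is saturated in $L$ and the Hodge structure on $L$ was defined by declaring all of $K$ algebraic and $T$ transcendental (with $T$ very general, by Assumption \ref{assumption 1}), the primitive algebraic sublattice $A(Y)_{prim}$ recovers $K$, while $T(Y)\cong T=T(X)$ via a Hodge isometry. By Assumption \ref{assumption 3} together with Theorem \ref{theorem: unique embedding into AT lattice}, the primitive embedding $T\hookrightarrow \widetilde{H}$ into the Addington-Thomas lattice is unique up to isometry, so the Hodge isometry $T(X)\cong T(Y)$ extends to a Hodge isometry $\widetilde{H}(X)\cong \widetilde{H}(Y)$. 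This shows $Y\in HFM(X,K)$, and by construction $Y$ maps to $[L]$. Together with injectivity from Proposition \ref{pPrimitiveAlgebraicFMPartner}, this gives $|HFM(X,K)|=|VFM(X,K)|$.

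There is no genuine obstacle: the entire argument reduces to the chain of divisibilities $\mathrm{div}_L(v)\mid\mathrm{div}_K(v)\mid v^2$ together with the input from Laza and the uniqueness of the extension to $\widetilde{H}$ provided by Assumption \ref{assumption 3}.
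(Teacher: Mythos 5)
Your proposal is correct and follows essentially the same route as the paper: verify Laza's criterion for each overlattice $L$ (no square-$2$ vectors by Definition \ref{d:List}, no $v$ with $v^2=6$ and $\mathrm{div}_L(v)=3$ by the hypothesis on $K$), apply Torelli to produce a cubic $Y$ with $H^4(Y,\bZ)_{prim}\cong L$, and use Assumption \ref{assumption 3} to extend the Hodge isometry $T(X)\cong T(Y)$ to the Addington--Thomas lattices, thereby inverting the injection of Proposition \ref{pPrimitiveAlgebraicFMPartner}. The only difference is cosmetic: to rule out $\mathrm{div}_L(v)=3$ you use the elementary chain $\mathrm{div}_L(v)\mid\mathrm{div}_K(v)\mid v^2$, whereas the paper invokes the structure $D(K)_3=(\bZ/3)^r$; your variant is if anything cleaner, since it does not appeal to Assumption \ref{a:CountViaOrthGroups}.
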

    
\begin{proof}
       Suppose that $K\in\mathscr{K}$, satisfying the additional assumption that there are no $v\in K$ with $v^2=6$ and $div_K(v)=3$ or $div_K(v)=6$. By assumption, each $[L]\in VFM(X,K)$ is an isomorphism class of overlattices $K\oplus T\subset L$ with $L\cong \Lambda$, both $K$ and $T$ primitively embedded into $L$, and equipped with a unique Hodge structure on inducing the given Hodge structure on $T$ and such that all classes in $K$ are algebraic. 
       There are no $v\in K$ with $v^2=6$ and $div_L(v)=3$; indeed, if there was, then $div_K(v)=3$ or $6$ since $D(K)_3=(\bZ/3)^r$.
       Thus by \cite[Thm. 1.1]{laz10}, there exists a cubic fourfold $Y$ such that $H^4(Y,\bZ)_{prim}\cong L$ as Hodge structures. 
       By construction, this cubic $Y$ has transcendental lattice Hodge isometric to $T(X)$. By Assumption \ref{assumption 3}, we obtain $\widetilde{H}(\Ku(X),\bZ) \simeq \widetilde{H}(\Ku(Y), \bZ)$. Thus $Y\in HFM(X,K)$. This defines an injective inverse $VFM(X,K)\rightarrow HFM(X,K)$, and the claim follows.
\end{proof}

\begin{remark}
    Suppose $K\in \mathscr{K}$ has a vector $v\in K$ with $v^2=6$ and $div_K(v)=3$. It is still possible that there exists some cubic fourfold $Y$ with $\Aprim(Y)\cong K$. Indeed, if there exists $L\in VFM(X,K)$ such that $div_L(v)=1$ in $L$ for all $v\in K$ with $v^2=6$, then such a $Y$ exists by the Torelli theorem.

    The problem is such a $K$ may have several primitive embeddings into $\Lambda$, which may or may not satisfy the divisibility condition. This is why our initial count is `virtual' - to pass from virtual to actual, one needs to distinguish these cases, compare Remark \ref{r:DifferenceVirtualActual}. 
\end{remark}

\begin{proposition}\label{prop: case 1}
    Let $K\in \calK$ be such that we are in Case 1 in Assumption \ref{a:CountViaOrthGroups}. Then $|HFM(X,K)|=|VFM(X,K)|$
\end{proposition}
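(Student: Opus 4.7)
The plan is to upgrade Lemma \ref{lem: no square 6} by showing that, in Case 1 of Assumption \ref{a:CountViaOrthGroups}, no $v \in K$ with $v^2 = 6$ can have $\mathrm{div}_L(v) = 3$ for any $L \in \mathcal{L}(K, T)$. Since $K \in \mathcal{K}$ has no vectors of square $2$ by Definition \ref{d:List}, this verifies both obstructions of \cite[Thm. 1.1]{laz10} simultaneously, and produces, for each $L$, a cubic fourfold $Y$ with $H^4(Y, \bZ)_{prim} \simeq L$ as Hodge structures. Combined with Assumption \ref{assumption 3}, which promotes $T(X) \simeq T(Y)$ to an isometry of Addington--Thomas lattices, and the injection of Proposition \ref{pPrimitiveAlgebraicFMPartner}, this yields $|HFM(X, K)| = |VFM(X, K)|$.

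The key computation I would carry out lives in the gluing framework of Subsection \ref{SubSubSec:Gluing}. Fix $L \in \mathcal{L}(K, T)$ with associated isotropic subgroup $H_L \subset D(K) \oplus D(T)$ and projection $H_{L, K} := p_K(H_L)$. For $v \in K$ with $v^2 = 6$, the class $\bar v := v/3 \bmod K$ is $3$-torsion, hence lies in $D(K)_3$. Unwinding the description $L = (K \oplus T) + H_L \subset K^* \oplus T^*$, I would check that $v/3 \in L^*$, or equivalently $\mathrm{div}_L(v) = 3$, is equivalent to $b_K(\bar v, \bar k) = 0$ for every $\bar k \in H_{L, K}$, i.e.\ $\bar v \in H_{L, K}^\perp$ inside $D(K)$.

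In Case 1 of Assumption \ref{a:CountViaOrthGroups}, $H_L$ is the graph of an injective anti-isometry $\alpha \colon D(K) \hookrightarrow D(T)$, so $H_{L, K} = D(K)$. Since prime components of $b_K$ are mutually orthogonal and $q_{K, 3}$ is non-degenerate (hence so is $b_{K, 3}$, because $3$ is odd), the condition $\bar v \in D(K)^\perp \cap D(K)_3$ forces $\bar v = 0$. This means $v = 3 w$ for some $w \in K$, giving $w^2 = 2/3 \not\in \bZ$, a contradiction.

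The only slightly delicate step is the bookkeeping for the equivalence $v/3 \in L^* \Leftrightarrow \bar v \in H_{L, K}^\perp$: one pairs $v/3 \in K^* \oplus 0$ against lifts in $K^* \oplus T^*$ of elements of $H_L$, observes that only the $K^*$-component contributes (as $v/3$ pairs trivially with $T^*$), and translates the pairing into the discriminant form $b_K$. Once this is set up, the non-degeneracy argument and Laza's criterion combine to force every virtual partner in Case 1 to be a genuine Hodge-theoretic Fourier--Mukai partner, completing the proof.
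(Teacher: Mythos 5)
Your proposal is correct and follows the same overall route as the paper's proof: reduce, as in Lemma \ref{lem: no square 6}, to showing that no $v\in K$ with $v^2=6$ can have $\mathrm{div}_L(v)=3$ for any $L\in\calL(K,T)$, using that in Case 1 of Assumption \ref{a:CountViaOrthGroups} the gluing subgroup $H_L$ projects isomorphically onto all of $D(K)$, and then conclude via \cite[Thm. 1.1]{laz10} and Assumption \ref{assumption 3}. The only difference is the mechanism for the key divisibility claim: the paper takes the element of $H_L$ lying over the class $[v/3]$ itself, lifts it to $\frac{v+w}{3}\in L$ with $w\in T$, and computes $v\cdot\frac{v+w}{3}=\frac{v^2}{3}=2$, so $\mathrm{div}_L(v)$ divides $2$; you instead pair $v/3$ against all of $H_{L,K}=D(K)$ and invoke non-degeneracy of $b_{K,3}$ to force $\bar v=0$, contradicting integrality of $K$. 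Both work, and the paper's variant is marginally more economical, since it uses only the self-pairing $q_K(\bar v)=\frac{2}{3}\neq 0$ and needs no non-degeneracy input. One small ordering point in your write-up: $\bar v = v/3 \bmod K$ is a well-defined element of $D(K)$ (and $b_K(\bar v,-)$ is well-defined) only when $3\mid \mathrm{div}_K(v)$; this costs nothing, because if $3\nmid\mathrm{div}_K(v)$ then some $k\in K\subset L$ already pairs with $v$ prime to $3$, but you should assume $v/3\in L^*$ first, deduce $v/3\in K^*$ from it, and only then run the discriminant-form computation.
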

\begin{proof}
    Let $[L]\in VFM(X,K),$ in particular an (isomorphism class of) overlattice $K\oplus T\subset L$, corresponding to group $H_L:=L/(K\oplus T)\subset D(K)\oplus D(T)$. By Assumption \ref{a:CountViaOrthGroups}, it follows that $H_L\cong D(K).$ By Lemma \ref{lem: no square 6}, it remains to assume that there exists $v\in K$ with square 6 and $div_K(v)=3$ or 6. Then $[\frac{v}{div(v)}]\in D(K),$ and there must exist $w\in T$ such that $\frac{v+w}{3}\in L$. Then $v\cdot \frac{v+w}{3}= 2$ and so $div_L(v)\neq 3.$

    By the same argument as in Lemma \ref{lem: no square 6}, there exists a cubic fourfold $Y$ with $H^4(Y,\bZ)_{prim}\cong L$ as Hodge structures. Again, Assumption \ref{assumption 3} ensures that $Y\in HFM(X,K)$.
\end{proof}

For the $K\in \mathscr{K}$ with discriminant group $D(K)$ satisfying Case 2 in Assumption \ref{a:CountViaOrthGroups}, and admitting $v\in K$ with $v^2=6, div_K(v)=3$ or $6$ the situation is more subtle.
Let $[L]\in VFM(X,K)$ corresponding to $H_L=L/(K\oplus T).$
It follows that $H_L\cong D(T)$, and the isomorphism class of $L$ corresponds to a choice of isomorphism $D(K)\cong D(T)(-1)\oplus \Gamma $, where $\Gamma\cong \bZ/3\bZ.$ Let $v\in K$ with $v^2=6$ and $div_K(v)=3$ or $6$. 
If the image $[\frac{v}{3}]\in D(K)$ for every such $v$ is contained in $D(T)(-1)$, then $div_L(v)=1$ and $[L]$ corresponds to some $Y\in HFM(X,K)$ as before.
Thus let $VFM(X,K)'\subset VFM(X,K)$ denote the subset containing all $[L]\in VFM(X,K)$ satisfying this condition. Again, applying the same argument of Lemma \ref{lem: no square 6}, we immediately obtain the following:

\begin{lemma}\label{lem:VFM'}
    Let $K\in \calK$ such that the discriminant group $D(K)$ is of the form Case 2 in Assumption \ref{a:CountViaOrthGroups}. Then $|HFM(X,K)|=|VFM(X,K)'|.$
\end{lemma}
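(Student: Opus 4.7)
The plan is to mimic the proofs of Lemma \ref{lem: no square 6} and Proposition \ref{prop: case 1}, using Laza's smoothness criterion \cite[Thm. 1.1]{laz10} to pass from $VFM(X,K)'$ to $HFM(X,K)$, and the global Torelli theorem together with Assumption \ref{assumption 3} to pass back the other way.

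First I will construct a map $\Phi\colon VFM(X,K)'\to HFM(X,K)$. Given $[L]\in VFM(X,K)'$, with $\alpha\colon D(T)\to D(K)$ the anti-isometry whose graph is $H_L=L/(K\oplus T)$, the defining condition $[v/3]\in \alpha(D(T))$ for every $v\in K$ with $v^2=6$ and $div_K(v)\in\{3,6\}$ already implies $div_L(v)=1$ for every such $v$, as spelled out in the text preceding the statement. Since $K\in\mathscr{K}$ also excludes vectors with $v^2=2$, Laza's criterion produces a smooth cubic fourfold $Y$ with $H^4(Y,\bZ)_{prim}\cong L$ as Hodge structures. By Assumption \ref{assumption 3}, the induced Hodge isometry $T(Y)\cong T$ extends to a Hodge isometry $\widetilde{H}(\Ku(Y),\bZ)\cong \widetilde{H}(\Ku(X),\bZ)$, so $Y\in HFM(X,K)$. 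The $G_{K,T}$-invariance of the condition on the set of classes $[v/3]$ ensures $\Phi$ is well defined on orbits, and injectivity of $\Phi$ is immediate from the global Torelli theorem for cubic fourfolds.

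For surjectivity, let $Y\in HFM(X,K)$ and set $L=H^4(Y,\bZ)_{prim}$. Smoothness of $Y$ and Laza's criterion give $div_L(v)\neq 3$ for every $v\in K$ with $v^2=6$. A direct computation in the overlattice $K\oplus T\subset L\subset K^*\oplus T^*$ shows that $v/3\in L^*$ if and only if $([v/3],0)\in H_L^\perp$, equivalently $[v/3]\in \alpha(D(T))^\perp=\Gamma$. Since $D(L)\cong \bZ/3$, divisibility takes values in $\{1,3\}$, so $div_L(v)\neq 3$ translates to $[v/3]\notin \Gamma$. To upgrade this to the stronger membership $[v/3]\in \alpha(D(T))$, I will exploit the freedom within the $G_{K,T}$-orbit of $L$: the subset $S:=\{[v/3]:v\in K,\ v^2=6,\ div_K(v)\in\{3,6\}\}\subset D(K)$ is $O(K)$-stable, and the task reduces to choosing an orbit representative whose associated subspace $\alpha(D(T))\subset D(K)$ contains all of $S$.

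The main obstacle will be precisely this last step: bridging the divisibility condition $[v/3]\notin \Gamma$ to the membership condition $[v/3]\in \alpha(D(T))$ after adjusting by $G_{K,T}$. This will rely on the orbit classification of $(r-1)$-dimensional non-degenerate subspaces of $D(K)_3$ of the correct isometry type recalled in Subsection \ref{Subsec:FiniteOrth}, together with a case analysis of how $O(K)$ acts on $S$. For the concrete applications in Sections \ref{sec:inv} and \ref{s:SymplecticOrder3}, I expect this to be verified by direct computation, yielding $|HFM(X,K)|=|VFM(X,K)'|$ as claimed.
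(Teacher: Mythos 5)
Your forward direction (the map $VFM(X,K)'\to HFM(X,K)$ via Laza's criterion and Assumption \ref{assumption 3}, with injectivity from Torelli) agrees with the paper's proof, and your lattice-theoretic translation is correct: writing $D(K)_3=M_L\oplus\Gamma_L$ with $M_L=\alpha(D(T)_3)$ nondegenerate and $\Gamma_L=M_L^{\perp}$ a line, one has $div_L(v)=3$ if and only if $[v/3]\in\Gamma_L$. The genuine gap is in your surjectivity step, and the repair you propose cannot work. Smoothness of a partner $Y$ gives, via Laza, only $[v/3]\notin\Gamma_L$ for each relevant $v$, i.e.\ that the $M_L$-component of $[v/3]$ is nonzero; membership in $VFM(X,K)'$ demands the stronger statement that the $\Gamma_L$-component vanishes. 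A class with nonzero components in \emph{both} summands satisfies the first condition but not the second, so a priori $HFM(X,K)$ could be strictly larger than $VFM(X,K)'$. Your plan to bridge this by ``exploiting the freedom within the $G_{K,T}$-orbit'' is vacuous: $\mathrm{O}(K)$ permutes the set $S$ of classes $[v/3]$ and carries the splitting $D(K)_3=M_L\oplus\Gamma_L$ to the splitting associated to $g\cdot L$, so both conditions ($S\cap\Gamma_L=\emptyset$ and $S\subset M_L$) are constant along $G_{K,T}$-orbits; no choice of representative helps. Deferring the remaining implication to ``direct computation'' leaves the lemma unproved.

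That said, you have isolated exactly the point where the paper's own argument is thinnest. The paper's converse consists of the assertion that any $[L]\in VFM(X,K)\setminus VFM(X,K)'$ contains a $v$ with $v^2=6$ such that $[v/3]$ generates $D(L)$, whence $div_L(v)=3$; this tacitly assumes the dichotomy that every relevant class lies either in $M_L$ or in $\Gamma_L$, which is precisely the implication you could not supply, and it is not automatic. Indeed the tension is quantitative: for $K=K_{mystery}$ the admissible lines $\Gamma_L$ (spanned by vectors of the same discriminant-form value $2/3$ as the classes $[v_i/3]$) number in the thousands in $(\bZ/3)^8$, while $S$ spans at most $18$ lines, so ``$\Gamma_L$ avoids $S$'' is far weaker than ``$M_L\supseteq S$''. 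A complete proof must either justify this dichotomy for the specific $K$ at hand, or else replace the membership condition defining $VFM(X,K)'$ by the divisibility condition $S\cap\Gamma_L=\emptyset$ (which is what Laza plus Torelli actually characterize) and redo the resulting count; as it stands, neither your proposal nor the paper's two-line argument closes this gap.
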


\begin{proof}
It remains to observe that every Hodge-theoretic Fourier--Mukai partner arises, via Torelli, from an element in $VFM(X,K)'$: indeed, if we had a vector $v$, $v^2=6$, in $K$ such that $\frac{v}{3}$ surjects onto the discriminant group of $L$, then $div_{L}(v)=3$. So elements in $VFM(X, K) \backslash VFM(X,K)'$ do not give rise to Hodge structures coming from the middle primitive cohomology of a cubic fourfold. 
\end{proof}

\section{Example 1: cubic fourfolds with symplectic involution}\label{sec:inv}

We will now illustrate in examples how the theory of the preceding sections yields actual counts of Fourier--Mukai partners for various geometrically and algebraically interesting classes of cubic fourfolds. A good source of such comes from cubics with automorphisms. In particular, the algebraic, algebraic primitive lattices and transcendental lattices are known for cubic fourfolds with prime order symplectic automorphism (see \cite{laza2019automorphisms} and \cite{BGM25}). One could apply this theory to every such example - here we showcase two examples, which highlight the technically difficult points of the count we have described.

In this section we focus on cubic fourfolds with symplectic involution. We prove:

\begin{theorem}
    Let $X$ be a general cubic fourfold with symplectic involution. Then $X$ has exactly 1120 non-trivial Fourier--Mukai partners $Y$, with $A(Y)_{prim}\cong E_6(2)\oplus A_2(Y).$
\end{theorem}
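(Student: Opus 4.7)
The plan is to run the algorithm of Sections \ref{sec: the virtual count} and \ref{sec: actual count} with the input datum $A(X)_{prim}\cong E_8(2)$ supplied by \cite[Theorem 1.1]{Mar23}. First I would fix $T := T(X)$ as the orthogonal complement of $E_8(2)$ in $\Lambda$: by Theorem \ref{theorem: unique embedding into AT lattice} this is well-defined up to isometry, has signature $(12,2)$ and discriminant group $(\bZ/2)^{8}\oplus \bZ/3$. Genericity gives Assumption \ref{assumption 1}, and Assumption \ref{assumption 3} follows from checking the hypotheses of Theorem \ref{theorem: unique embedding into AT lattice} with $H=\tH(\Ku(X),\bZ)$. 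Next I would enumerate $\mathscr{K}$ via Theorem \ref{theorem: prim embedding}: in the positive definite rank $8$ genus of a complement to $T$ in $\Lambda$, the only lattices containing no vector of square $2$ are $E_8(2)$ (giving the trivial partner $X$) and $K := E_6(2)\oplus A_2(2)$.

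The trivial case $K = E_8(2)$ is handled by Proposition \ref{p:CountCase1}: the discriminant group has no $3$-part, so we are in Case 1 of Assumption \ref{a:CountViaOrthGroups} with $r=0$, and the count is $|O(W)|/(2|O(V)|)=1$, producing only $X$ itself. For $K = E_6(2)\oplus A_2(2)$, I would first note that $K$ contains no vector of square $6$: every nonzero vector of $E_6(2)$ has square in $\{4,8,12,\dots\}$ and every nonzero vector of $A_2(2)$ has square in $\{4,12,16,\dots\}$. Hence Lemma \ref{lem: no square 6} yields $|HFM(X,K)|=|VFM(X,K)|$ with no further analysis needed. The discriminant groups $D(K)=(\bZ/3)^{2}\oplus(\bZ/2)^{8}$ and $D(T)=\bZ/3\oplus(\bZ/2)^{8}$ place us in Case 2 of Assumption \ref{a:CountViaOrthGroups} with $r=2$; Proposition \ref{p:CountCase2} gives one $\overline{G}_{K, T}$-orbit on $\calH_{K, T}$, but the $G_{K, T}$-orbits will be finer because $\rho\colon G_{K, T}\to \overline{G}_{K, T}$ fails to be surjective on the $2$-primary part of $D(K)$.

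To reach the count $1120$, I would establish a bijection between $VFM(X, K)$ and the set of primitive $A_2$-sublattices of $E_8$, which has cardinality $1120$ by Lemma \ref{l:A2SubsysttemsE8}. Each such $A_2 \subset E_8$ has orthogonal complement isomorphic to $E_6$, with $A_2\oplus E_6$ of index $3$ in $E_8$. Scaling by $2$ and composing with the fixed primitive embedding $E_8(2) \hookrightarrow \Lambda$ (of complement $T$), one obtains a chain $A_2(2)\oplus E_6(2)\subset E_8(2)\subset \Lambda$ and, via saturation, an element $L\in \calL(K, T)$. Injectivity of the induced map into $VFM(X, K)$ uses that $O(K) = O(E_6(2))\times O(A_2(2))$ preserves the decomposition (the two summands have distinct ranks), so distinct $A_2$-sublattices produce inequivalent $G_{K, T}$-orbits; surjectivity follows by reversing the construction, using that any $L\in\calL(K, T)$ together with the fixed embedding of $T$ in $\Lambda$ determines a unique index-$3$ overlattice of $A_2(2)\oplus E_6(2)$ inside $\Lambda$ isomorphic to $E_8(2)$, hence a unique $A_2\subset E_8$. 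This gives $|VFM(X, K)| = |HFM(X, K)| = 1120$.

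Each of these $1120$ Hodge-theoretic Fourier--Mukai partners is an honest Fourier--Mukai partner birational to $X$ by \cite{BBM25}, where $Y$ is constructed explicitly from the pair of nonsyzygetic cubic scrolls in $X$ associated with the chosen $A_2$-sublattice. The main obstacle is verifying the bijection rigorously: one must track the gluing data relating $K$, $T$ and $\Lambda$ carefully, and rule out \emph{exotic} virtual partners not arising from an $A_2$-subsystem of $E_8$. An equivalent approach is a direct double-coset count in $O(D(K))\times\{\pm 1\}$: the image of $O(E_6)\times O(A_2)$ in the orthogonal group of $D(K)_2$ should have index equal to the combinatorial count $1120$, reflecting (as for the geometric viewpoint) the interplay between the Weyl groups of $E_6$ and $A_2$ and the enveloping $E_8$ symmetries.
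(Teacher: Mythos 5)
There are two genuine gaps, both in the lattice-theoretic counting steps; the overall skeleton (enumerate $\mathscr{K}=\{E_8(2),\,E_6(2)\oplus A_2(2)\}$, apply Lemma \ref{lem: no square 6}, then realize the partners geometrically via \cite{BBM25}) does match the paper.

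First, for $K=E_8(2)$ you conclude that the count is $|\mathrm{O}(W)|/(2|\mathrm{O}(V)|)=1$, "producing only $X$ itself." But Proposition \ref{p:CountCase1} counts $\overline{G}_{K,T}$-orbits, which by Proposition \ref{pCount2} is only a \emph{lower} bound for $|VFM(X,K)|$: the $G_{K,T}$-orbits refine the $\overline{G}_{K,T}$-orbits, so there could a priori be several partners with lattice $E_8(2)$, contradicting "exactly $1120$ non-trivial partners." Equality requires surjectivity of $\rho\colon \mathrm{O}(E_8(2))\to \mathrm{O}(D(E_8(2)))$, which the paper verifies by a Magma computation (image of order $348364800$) matched against Theorem \ref{t:OrderOrthF2} for a nondefective type-$3$ space of dimension $8$ over $\bF_2$. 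You omit this entirely — and it cannot be taken for granted, since for the other lattice in $\mathscr{K}$ surjectivity genuinely \emph{fails} (that failure is the very source of the number $1120$).

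Second, for $K=E_6(2)\oplus A_2(2)$ the core of your argument — a bijection between $VFM(X,K)$ and the $1120$ root subsystems $A_2\subset E_8$ of Lemma \ref{l:A2SubsysttemsE8} — is asserted rather than proved, and the construction of the map is incorrect as stated. From the chain $A_2(2)\oplus E_6(2)\subset E_8(2)\subset\Lambda$, "saturation" does not produce an element of $\calL(K,T)$: the saturation of $K$ inside $\Lambda$ is precisely $E_8(2)$, so condition b) of the definition fails for this overlattice, and the chain just returns the Hodge structure of $X$ itself. One would instead have to re-glue (e.g.\ glue $D(T)_3$ to $D(A_2(2))_3$ while keeping the $2$-primary gluing inherited from $E_8(2)\oplus T\subset\Lambda$), which is exactly the bookkeeping you defer. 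Your injectivity argument ("$\mathrm{O}(K)$ preserves the decomposition, so distinct $A_2$-sublattices give inequivalent orbits") does not address the real issue — all $A_2$-subsystems are equivalent under $\mathrm{O}(E_8)$, so one must control which automorphisms of $D(K)$ are induced from $\mathrm{O}(K)$ — and surjectivity (no "exotic" virtual partners) you explicitly leave open. The paper avoids the bijection altogether: by Witt's theorem the graphs of injective anti-isometries $D(T)\to D(K)$ number $|\mathrm{O}(D(K))|/2$, and Magma computations give $|\mathrm{im}(\rho_K)|=12\cdot 103680=1244160$ and $|\mathrm{O}(D(K))|=1393459200$, whence $1393459200/1244160=1120$ orbits; the $A_2$-count of Lemma \ref{l:A2SubsysttemsE8} enters only afterwards, to match this number with the geometric construction. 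Relatedly, your final step (that all $1120$ Hodge-theoretic partners are honest derived partners birational to $X$) presupposes the unproven identification of each overlattice $L$ with the Hodge structure of a specific Gale-dual cubic; the paper instead constructs $1120$ pairwise non-isomorphic derived partners explicitly (Theorem \ref{tExplicitConstruction}, distinguished by invariants of cubic surfaces at Eckardt points) and then uses the completed Hodge-theoretic count to conclude that these exhaust all partners.
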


In particular, none of the Fourier--Mukai partners admit a symplectic involution. Curiously they all admit a antisymplectic involution. Thus we obtain the following corollary:

\begin{corollary}
    Admitting a symplectic automorphism is not preserved under Fourier--Mukai partnership for cubic fourfolds.
\end{corollary}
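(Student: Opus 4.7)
The plan is to apply the algorithm of Sections~\ref{sec: the virtual count} and~\ref{sec: actual count} with the input data $A(X)_{prim} \cong E_8(2)$. First I would recall from \cite[Thm.~1.1]{Mar23} that a general cubic fourfold with symplectic involution has primitive algebraic lattice $E_8(2)$, and compute $T := T(X)$ as the orthogonal complement of a primitive embedding $E_8(2) \hookrightarrow \Lambda$. Then $T$ has signature $(12,2)$, and since $D(E_8(2))$ is a $2$-group while $D(\Lambda) \cong \bZ/3$, the gluing between $E_8(2)$ and $T$ inside $\Lambda$ is trivial on the $3$-part, so $D(T)_3 \cong \bZ/3$. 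I would also verify that Assumptions~\ref{assumption 1} and~\ref{assumption 3} hold in this setting.

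Next, I would enumerate the list $\mathscr{K}$ of candidate primitive algebraic lattices via Theorem~\ref{theorem: prim embedding}: these are the positive definite rank-$8$ even lattices in the relevant genus containing no vector of square $2$. The expected outcome is $\mathscr{K} = \{E_8(2),\, E_6(2) \oplus A_2(2)\}$, with $E_8(2)$ yielding only the trivial virtual partner $X$ itself (by Assumption~\ref{assumption 1}) and $E_6(2) \oplus A_2(2)$ producing all non-trivial partners.

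For $K := E_6(2) \oplus A_2(2)$ I would check that $D(K)_3 \cong (\bZ/3)^2$ and $D(T)_3 \cong \bZ/3$, placing us in Case~2 of Assumption~\ref{a:CountViaOrthGroups}. By Proposition~\ref{p:CountCase2} there is a single $\overline{G}_{K,T}$-orbit on $\calH_{K,T}$, so Proposition~\ref{pCount2} reduces the virtual count to the refinement of this orbit under $G_{K,T}$, controlled by the cokernel of $\rho\colon O(K) \times \{\pm \id_T\} \to \overline{G}_{K,T}$. My central claim is that this refinement has exactly $1120$ orbits. I plan to establish this by identifying the $G_{K,T}$-orbits with the $\mathrm{O}(E_8)$-orbits on primitive rank-$2$ root sublattices $A_2 \hookrightarrow E_8$, of which there are exactly $1120$ by Lemma~\ref{l:A2SubsysttemsE8}: the image of $O(K)$ in $\mathrm{O}(D(K)_3) \cong \mathrm{O}\bigl((\bZ/3)^2\bigr)$ records how $K = A_2(2) \oplus E_6(2)$ arises, after rescaling, as the sublattice $A_2 \oplus E_6$ inside an overlattice isometric to $E_8$. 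Lemma~\ref{lem:VFM'} then converts the virtual count to the actual count, once one checks that no $v \in K$ with $v^2 = 6$ acquires $\operatorname{div}_L(v) = 3$ in the relevant overlattice $L$, so that \cite[Thm.~1.1]{laz10} produces a smooth cubic fourfold $Y$; Assumption~\ref{assumption 3}, together with Theorem~\ref{t:TwistedK3s} (or a direct orientation argument on $\tH$), then upgrades the Hodge isometry to a Fourier--Mukai equivalence. The birationality of each $Y$ with $X$ follows from the explicit construction of \cite{BBM25} applied to the non-syzygetic pair of cubic scrolls on $X$ indexed by the corresponding $A_2$ subsystem.

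The main obstacle will be the matching, in Step~3, of the refinement of the single $\overline{G}_{K,T}$-orbit into $1120$ distinct $G_{K,T}$-orbits with the $A_2$ root subsystems of $E_8$. Concretely, one must show that $O(E_6(2) \oplus A_2(2))$ does not act transitively on the set of gluings of $E_6(2) \oplus A_2(2)$ into an overlattice isometric to $E_8(2)$, while $\mathrm{O}(D(K))$ does, and that the resulting coset space has size $1120$. A secondary subtlety is ruling out the exceptional case $v^2 = 6$, $\operatorname{div}_L(v) = 3$ in the passage from $VFM$ to $HFM$, which requires a direct inspection of how the $3$-part of the discriminant form glues across the $1120$ partners.
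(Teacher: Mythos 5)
Your plan follows the paper's skeleton for this corollary (which the paper deduces from Theorem \ref{t:invintro}): enumerate $\mathscr{K}=\{E_8(2),\,E_6(2)\oplus A_2(2)\}$, get a single $\overline{G}_{K,T}$-orbit from Proposition \ref{p:CountCase2}, refine under $G_{K,T}$, pass from virtual to actual partners, and use \cite{BBM25} for birationality. The genuinely different ingredient is your proposal to obtain the number $1120$ from Lemma \ref{l:A2SubsysttemsE8} rather than from the paper's Magma computation of $|\mathrm{O}(D(K))|/|\mathrm{im}(\rho_K)|$; this is an attractive, computer-free idea, but the mechanism you describe is the wrong one and would fail. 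You locate the refinement in the $3$-part: ``the set of gluings of $E_6(2)\oplus A_2(2)$ into an overlattice isometric to $E_8(2)$.'' That set has exactly \emph{two} elements (the two isotropic lines of $(D(K)_3,\mathrm{diag}(1,-1))$), and $\mathrm{O}(K)$ acts \emph{transitively} on it, since $(\mathrm{id}_{E_6(2)},-\mathrm{id}_{A_2(2)})$ swaps the two lines; indeed the image of $\mathrm{O}(K)$ in $\mathrm{O}(D(K)_3)$ is all of $\mathrm{O}((\bZ/3)^2)$, so the $3$-part contributes a single orbit. The $1120$ lives entirely in the $2$-part: $D(K)_2\cong (E_6\oplus A_2)/2(E_6\oplus A_2)\cong E_8/2E_8$, and the image of $\mathrm{O}(K)=\mathrm{O}(E_6)\times\mathrm{O}(A_2)$ there has index $1120$ in $\mathrm{O}(E_8/2E_8)$. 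Matching that index with the count of $A_2$-subsystems of $E_8$ is possible, but requires facts you never state: $W(E_8)\to \mathrm{O}(E_8/2E_8)$ is surjective with kernel $\pm\mathrm{id}$, and the stabilizer of an $A_2$-subsystem in $\mathrm{O}(E_8)$ maps onto the image of $\mathrm{O}(E_6)\times\mathrm{O}(A_2)$. So the idea is salvageable, but not as written.

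The second, more basic gap is that your argument stops before the corollary is proved. Establishing that $X$ has $1120$ partners with primitive algebraic lattice $E_6(2)\oplus A_2(2)$, all birational to $X$, does not by itself show that the property ``admits a symplectic automorphism'' fails for them. One must argue that no partner $Y$ admits \emph{any} symplectic automorphism: a symplectic involution would force a finite-index embedding $E_8(2)\hookrightarrow A(Y)_{prim}=E_6(2)\oplus A_2(2)$, impossible by comparing determinants ($2^8$ versus $2^8\cdot 3^2$), and symplectic automorphisms of odd prime order have coinvariant lattices of rank at least $12>8$ by the classification in \cite{laza2019automorphisms}, \cite{BGM25}. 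This short lattice argument is exactly what the paper invokes (``none of the Fourier--Mukai partners admit a symplectic involution'') and is the actual content of the corollary; without it your proof proves the wrong statement. Two smaller points: invoking Theorem \ref{t:TwistedK3s} requires first checking that $\Ku(X)$ and $\Ku(Y)$ are twisted-K3 categories (true here, since $\tH^{1,1}$ has signature $(8,2)$, hence contains an isotropic vector, but this needs saying; the paper instead gets genuine FM partnership from the explicit construction of \cite{BBM25}); and the assertion that $E_8(2)$ yields only $X$ itself is not ``by Assumption \ref{assumption 1}'' — it needs the Case 1 orbit count together with the surjectivity of $\mathrm{O}(E_8(2))\to \mathrm{O}(D(E_8(2)))$, which the paper verifies computationally.
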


\subsection{Counting Hodge theoretic Fourier--Mukai partners}\label{s:SymplecticInvolution}

 We recall the main classification theorem for cubic fourfolds with involution \cite[Thm.1.1]{Mar23}. 
\begin{theorem}\label{thm: inv}
	Let $X$ be a  general cubic fourfold with $\phi_i$ an involution of $X$ fixing a linear subspace of $\bP^5$ of codimension $i$. Then either:
	\begin{enumerate}
		\item $i=1$, $\phi_1$ is anti-symplectic and $A(X)_{prim}\cong E_6(2)$, $T(X)\cong U^2\oplus D_4^3.$ The algebraic lattice is spanned by classes of planes contained in $X$;
		\item $i=2$, $\phi_2$ is symplectic and $A(X)_{prim}\cong E_8(2)$, $T(X)\cong A_2\oplus U^2\oplus E_8(2).$ The algebraic lattice is spanned by classes of cubic scrolls contained in $X$;
		\item $i=3$, $\phi_3$ is anti-symplectic and $$A(X)_{prim}\cong M,\,\, T(X)\cong U\oplus A_1\oplus A_1(-1)\oplus E_8(2).$$ The algebraic lattice contains an index 2 sublattice spanned by classes of planes contained in $X$.
	\end{enumerate}
Here $M$ is the unique rank 10 even lattice obtained as an index 2 overlattice of $D_9(2)\oplus\langle 24\rangle$.
\end{theorem}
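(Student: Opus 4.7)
The plan is to combine a linearisation of the involution on $\bP^5$ with Lefschetz fixed-point and Hodge-theoretic computations, and then identify the resulting lattices via their invariants.

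First I would show that any biregular involution $\phi$ of a smooth cubic fourfold $X\subset\bP^5$ is the restriction of a linear involution of $\bP^5$, which follows from $\operatorname{Aut}(X)\hookrightarrow\operatorname{PGL}_6(\bC)$ via the anticanonical embedding. A linear involution on $\bC^6$ is diagonalisable, so its fixed locus on $\bP^5$ is a disjoint union $L_+\sqcup L_-$ of linear subspaces with $\dim L_+ + \dim L_- = 4$. Up to swapping $L_\pm$, the codimension $i:=\operatorname{codim}_{\bP^5}(L_+)\in\{1,2,3\}$ parametrises the three cases. For each $i$ one writes down the generic $\phi$-invariant cubic by enumerating monomials of even total degree in the $L_-$-coordinates, and checks that a generic such cubic is smooth, so that the family is nonempty. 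The (anti)symplectic character of $\phi^*$ on $H^{3,1}(X)=\bC\omega$ (with $\omega=\operatorname{Res}(\Omega_5/f)$) is then $(\det\phi)\epsilon_f^{-1}$, where $\det\phi=(-1)^i$ and $\epsilon_f=1$ for the generic invariant $f$, giving $i=2$ symplectic and $i=1,3$ antisymplectic.

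The ranks of the eigenspaces of $\phi^*$ on $H^4(X,\bZ)$ are then extracted via the topological Lefschetz formula. Inspecting the generic $\phi$-invariant cubic shows that for $i=2,3$ the smaller subspace $L_-$ lies inside $X$ (the defining equation has no terms supported on $L_-$), while for $i=1$ the single fixed point lies on $X$; in each case $L_+\cap X$ is a smooth cubic hypersurface in $L_+$ (a cubic threefold, surface, or elliptic curve respectively). Direct computation yields $\chi(X^{\phi})\in\{-5,11,3\}$; using that $\phi^*$ acts trivially on $H^{2k}(X,\bQ)\cong\bQ$ for $k\neq 2$, the formula $\chi(X^{\phi})=4+\operatorname{tr}(\phi^*\mid H^4(X,\bQ))$ produces traces $-9,7,-1$. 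Combined with $\dim H^4(X,\bZ)_{prim}=22$ and the fact that $\eta$ is $\phi$-invariant, this gives the primitive eigenspace dimensions $(\dim H^+_{prim},\dim H^-_{prim})=(6,16),(14,8),(10,12)$. For a very general $X$ with the prescribed involution, $T(X)$ coincides with the eigenspace of $\phi^*$ on $H^4(X,\bZ)_{prim}$ containing $H^{3,1}(X)$ and $A(X)_{prim}$ with the complementary eigenspace, pinning down the ranks as $6$, $8$, $10$.

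The main difficulty is upgrading these rank counts to the asserted isometry classes. Here I would produce explicit algebraic generators of $A(X)$ inside the appropriate eigenspace, using classes of planes on $X$ in cases $i=1,3$ (with $P^2=3$) and classes of cubic scrolls in case $i=2$ (with $\Sigma^2=7$), and compute their Gram matrix by intersection theory on the cubic fourfold. A rank count identifies a finite-index sublattice and yields its signature, discriminant group, and discriminant form; these data pin down the genus, and one invokes uniqueness of the isometry class within that genus (available in each case via small-rank classification results for even lattices) to identify $A(X)_{prim}$ with $E_6(2)$, $E_8(2)$, or $M$. In case $i=3$ one verifies separately that the span of plane classes has index $2$ in $A(X)$, yielding the described overlattice presentation of $M$. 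Finally, $T(X)$ is recovered up to isometry as the orthogonal complement of $A(X)_{prim}$ inside the unimodular lattice $H^4(X,\bZ)$: its genus is determined by Nikulin's theorem, and the isometry class is identified with the claimed sum of standard pieces by computing signature and discriminant form directly.
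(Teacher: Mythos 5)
First, a structural remark: the paper does not prove Theorem \ref{thm: inv} at all --- it is recalled verbatim from \cite[Thm.~1.1]{Mar23} --- so your proposal can only be measured against that cited proof. Your first half reproduces it faithfully and correctly: linearity of $\Aut(X)$, diagonalisation giving fixed loci $L_+\sqcup L_-$ with $i\in\{1,2,3\}$, the symplectic character on the residue generator (minor slip: the factor is $\det(\phi)\,\epsilon_f^{-2}$ rather than $\det(\phi)\,\epsilon_f^{-1}$, immaterial here since $\epsilon_f=1$ and $\epsilon_f^2=1$ in any case), the Euler characteristics $-5,11,3$ of the fixed loci, the traces $-9,7,-1$, and the primitive eigenspace dimensions $(6,16)$, $(14,8)$, $(10,12)$. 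Two assertions in this half are stated without argument and do need one: that the generic invariant cubic is smooth in each case (you flag this), and, more seriously, that for very general $X$ the lattice $T(X)$ \emph{equals} the eigenlattice through $H^{3,1}(X)$ --- the inclusions $T(X)\subseteq H^4_{\pm}$ and $H^4_{\mp}\subseteq A(X)$ are formal, but equality requires a dimension count matching the invariant family of cubics against the period domain of the relevant eigenlattice (e.g.\ $12=14-2$ in case $i=2$).

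The genuine gap is in your final lattice-identification step. You propose to compute the Gram matrix of the span of plane or scroll classes, read off its ``signature, discriminant group, and discriminant form,'' and thereby ``pin down the genus'' of $A(X)_{prim}$. But the discriminant form is not an invariant under passage to finite-index overlattices (the discriminant drops by the square of the index, and the form changes by an isotropic-subgroup quotient), so the genus of the geometric span does not determine the genus of its saturation $A(X)_{prim}$; you only control the index in case $i=3$, and you give no mechanism for proving saturation in cases $i=1,2$. What closes this gap in \cite{Mar23} (and in \cite{LPZ} for the Eckardt case $i=1$) is the a priori lattice-theoretic constraint that the eigenlattices of an involution acting on the unimodular lattice $H^4(X,\bZ)$ are $2$-elementary away from the $\eta$-direction; combined with rank and signature, Nikulin's classification then sharply limits the possible saturations, and the explicit plane/scroll classes serve to establish the \emph{spanning} statements rather than to discover the genus. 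Relatedly, you treat the existence and mutual intersections of these surfaces as routine intersection theory, whereas producing them is real geometric work --- for $i=2$ the $120$ scroll pairs come from the conic-bundle and degree-one del Pezzo structure ($240$ exceptional curves, tritangent planes, symmetroid hyperplane sections) recalled in Section \ref{sec:inv} of this paper. Finally, your wholesale appeal to ``uniqueness within the genus'' is fine for $E_6(2)$ and $E_8(2)$ (rescalings of lattices with one class in their genus) and for the indefinite lattices $T(X)$ by Nikulin's criteria (note the case $i=3$ sits exactly at the boundary $\rank T=l(D(T))+2=12$), but it is a separate verification for the rank $10$ lattice $M$, which your sketch does not supply.
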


\begin{remark}
Note that cubics with an involution of type $\phi_1$ are precisely the ones with an Eckardt point (see \cite{LPZ}).
\end{remark}

Here we will focus on case (b) in Theorem \ref{thm: inv}: cubic fourfolds with a symplectic involution. Let $X$ be a general cubic with symplectic involution. Thus:
\begin{enumerate}
    \item $A(X)_{prim}\cong E_8(2)$ with $D(A(X)_{prim})=(\bZ/2)^8$;
    \item $T:=T(X)\cong A_2\oplus U^2\oplus E_8(2)$, with $D(T)=\bZ/3\times (\bZ/2)^8$.
\end{enumerate}

\medskip

Note that Assumption \ref{assumption 1} clearly holds in this case. Assumption \ref{assumption 3} holds as well since, with the notation of Theorem \ref{theorem: unique embedding into AT lattice}, we have that $H=\widetilde{\Lambda}$ has signature $(h_+, h_-)=(20, 4)$, $T$ has signature $(t_+, t_-)=(12,2)$. Condition (a) of Theorem \ref{theorem: unique embedding into AT lattice} thus holds, for (b) we get $h_++h_- -(t_++t_-)= 10$ and thus (c) is vacuous and (b) is true because $10 \ge l(D(T)_3)+2=3.$ 

\medskip

We first determine the list $\mathscr{K}$, consisting of all lattices $K$ up to isometry that arise as orthogonal complements of $T$ for some primitive embedding $T\hookrightarrow \Lambda.$

\begin{lemma}
    We have $\mathscr{K}=\{E_8(2), E_6(2)\oplus A_2(2)\}$.
\end{lemma}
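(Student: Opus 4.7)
The plan is to apply Nikulin's classification of primitive embeddings (Theorem \ref{theorem: prim embedding}) with $S = T$ and $M = \Lambda$. Since $T$ has signature $(12,2)$ and $\Lambda$ has signature $(20,2)$, the orthogonal complement $K = T^\perp$ is positive definite of rank $8$. Primitive embeddings are classified by gluing data $(H_T, H_\Lambda, \gamma)$, where $H_\Lambda \subseteq D(\Lambda) = \bZ/3$ and $H_T \subseteq D(T) = (\bZ/2)^8 \oplus \bZ/3$ is isometric to $H_\Lambda$ via $\gamma$. Since $q_\Lambda$ takes $3$-power values, $H_T$ must lie inside the $\bZ/3$-summand of $D(T)$ (namely $D(A_2)$), yielding exactly two cases: $H_\Lambda = H_T = 0$ and $H_\Lambda = H_T = \bZ/3$.

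In each case I would compute the discriminant form $-\delta$ of $K$ directly from the formula in Theorem \ref{theorem: prim embedding}. In the first case, $D(K) = (\bZ/2)^8 \oplus (\bZ/3)^2$ with $-\delta = -q_T \oplus q_\Lambda$: the $2$-part is $q_{E_8(2)}$ (which is self-inverse since it takes values in $\bZ/2\bZ \subset \bQ/2\bZ$), and the $3$-part is the hyperbolic form over $\bF_3$ with values $(-2/3) \oplus (2/3)$. In the second case, the anti-isometric identification at the $3$-parts cancels them, leaving $D(K) = (\bZ/2)^8$ with form $q_{E_8(2)}$. A direct verification, using that $E_6 \oplus A_2 \subset E_8$ is an index-$3$ sublattice (so the $2$-adic behaviour after scaling by $2$ matches that of $E_8(2)$) and that $q_{E_6(2)}(\xi) = 2/3$, $q_{A_2(2)}(\xi) = -2/3$ on the $3$-parts, shows that $E_6(2) \oplus A_2(2)$ lies in the first genus; similarly $E_8(2)$ lies in the second genus, and can be realised explicitly via the diagonal embedding $E_8(2) \hookrightarrow E_8 \oplus E_8$.

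To finish, I would verify that each genus has class number one, so that the listed lattices are the unique representatives up to isometry; this can be done by enumeration in Magma or OSCAR, which the authors employ throughout. Finally, the no-norm-$2$ condition is immediate: since $E_8$, $E_6$, $A_2$ all have minimum norm $2$, their rescalings $E_8(2)$, $E_6(2)$, $A_2(2)$ have minimum norm $4$, so neither candidate contains a vector of norm $2$. The main obstacle is the class-number-one verification: while the mass formula bounds the number of classes in each genus from above, pinning down exactly one requires either explicit computer enumeration or a careful appeal to the classification of rank-$8$ positive definite even lattices with $2$- and $3$-elementary discriminant groups.
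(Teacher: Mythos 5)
Your proposal is correct and follows essentially the same route as the paper: apply Nikulin's Theorem~\ref{theorem: prim embedding} to $T\hookrightarrow\Lambda$, split into the two gluing cases $H_T\cong H_\Lambda$ trivial or $\bZ/3$, identify the two resulting genera (of $E_6(2)\oplus A_2(2)$ and $E_8(2)$ respectively), and finish with a uniqueness check plus the observation that both representatives have minimum $4$. The only differences are cosmetic: you identify the genus data by hand (e.g.\ via the index-$3$ inclusion $E_6\oplus A_2\subset E_8$ for the $2$-adic part) where the paper computes Conway--Sloane symbols in OSCAR/Magma, and you state the final step as a class-number-one verification for both genera, whereas the paper phrases the second check in the more robust form that $E_6(2)\oplus A_2(2)$ is the only isometry class in its genus containing no vector of square $2$ --- the computer enumeration the paper cites (and that you propose) settles either formulation.
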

\begin{proof}
    We use Theorem \ref{theorem: prim embedding} for $S:=T$ and $\Lambda:=M$. We first enumerate genera for possible $T^\perp$ - there are two possibilities based on the groups $H_S, H_M$. 

    The first corresponds to the genus of $A(X)_{prim}\cong E_8(2).$ This corresponds to $H_T\cong H_\Lambda\cong \bZ/3$ in the notation of Theorem \ref{theorem: prim embedding}. Since $E_8(2)$ is unique in its genus, every primitive embedding of $T$ into $\Lambda$ with $H_T\cong \bZ/3\bZ$ has $T^\perp\cong E_8(2).$

    The second possibility corresponds to when $H_T=H_\Lambda =(1).$ Such an embedding exists if there exists an even lattice $K$ with signature $(8,0)$ and $D(K)= (\bZ/2)^8\times (\bZ/3)^2,$ with quadratic form 
    
\[
q=-q_{T(X)} \oplus q_\Lambda\colon D(T(X))\oplus \bZ/3 \to \bQ/2\bZ .
\]
  The $2$-part $q_2$ is the same as the $2$-part of the quadratic form on $D(E_8(2))$, and this can be computed \cite{MMSO_Files} to have Conway-Sloane genus symbol at $2$ given by $2^8$.  We have to check the symbol at $3$. By construction, it is represented by the form with matrix $\mathrm{diag}(+1, -1)$ on $(\bZ/3)^2$ (from the factors of $A_2$ in $T(X)$ and $\Lambda$), and its determinant is $-1$, a quadratic nonresidue, so the symbol at $3$ is $3^{-2}$ as given above. Thus the Conway-Sloane genus symbol is given as: 
  \[\mathrm{at}\: 2: \quad  2^8 \quad \mathrm{at}\: 3: \quad 1^{-6}\, 3^{-2}. \]

    A representative for this genus is exactly $E_6(2)\oplus A_2(2)$. One can check via OSCAR \cite{MMSO_Files} that this is the only isometry class in this genus that does not contain any vectors of square 2. This completes the list $\calK.$  
\end{proof}
We now compute $|HFM(X,K)|$ for each $K\in \mathscr{K}.$ 

\begin{proposition}
    We have $HFM(X,E_8(2))=\{X\}.$
\end{proposition}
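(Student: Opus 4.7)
The plan is to apply the machinery of Sections \ref{sec: the virtual count}--\ref{sec: actual count} directly to $K = E_8(2)$, relying on two inputs: (i) the form on $E_8(2)$ takes values in $4\bZ$, so there is no vector with $v^2 = 6$, and (ii) the natural reduction map $\mathrm{O}(E_8(2)) \to \mathrm{O}(D(E_8(2)))$ is classically surjective.

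First I would verify that with $K = E_8(2)$ and $T = A_2 \oplus U^2 \oplus E_8(2)$ we land in Case 1 of Assumption \ref{a:CountViaOrthGroups} with $r = 0$: the $3$-part of $D(K) = (\bZ/2)^8$ is trivial, the $3$-part of $D(T) = \bZ/3 \oplus (\bZ/2)^8$ is $\bZ/3$, and the non-$3$ parts of $D(K)$ and $D(T)$ agree and equal $(D(E_8(2)), q_{E_8(2)})$. Then Proposition \ref{p:CountCase1} applies with $V = (\bF_3^0, q_{K,3})$ the zero space and $W = (\bF_3, -q_{T,3})$ a one-dimensional nondegenerate orthogonal $\bF_3$-space, giving $|\mathrm{O}(V)| = 1$ and $|\mathrm{O}(W)| = 2$, so the number of $\overline{G}_{K,T}$-orbits in $\calH_{K,T}$ is $|\mathrm{O}(W)|/(2|\mathrm{O}(V)|) = 1$.

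To promote this to the full $G_{K,T}$-orbit count, I would apply the last assertion of Proposition \ref{pCount2}: it suffices to show $\rho \colon G_{K,T} \to \overline{G}_{K,T}$ is surjective. The factor $\{\pm\mathrm{id}_T\} \to \{\pm\mathrm{id}_{D(T)}\}$ is already a bijection (since $-\mathrm{id}$ acts nontrivially on the $\bZ/3$ summand of $D(T)$), so surjectivity of $\rho$ reduces to surjectivity of $\mathrm{O}(E_8(2)) \to \mathrm{O}(D(E_8(2)))$. Since scaling the form does not affect isometries, $\mathrm{O}(E_8(2)) = \mathrm{O}(E_8) = W(E_8)$, and under the natural identification $\mathrm{O}(D(E_8(2))) \cong \mathrm{O}^+(8,\bF_2)$ the map is the classical surjection $W(E_8) \twoheadrightarrow \mathrm{O}^+(8,\bF_2)$ with kernel $\{\pm\mathrm{id}\}$. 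This classical surjectivity is the only nontrivial ingredient and the main potential obstacle.

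Having established $|VFM(X, E_8(2))| = 1$, I would finish by invoking Lemma \ref{lem: no square 6}: since $E_8(2)$ contains no vector of square $6$, the lemma yields $|HFM(X, E_8(2))| = |VFM(X, E_8(2))| = 1$. Since $X$ itself trivially lies in $HFM(X, E_8(2))$, we conclude $HFM(X, E_8(2)) = \{X\}$.
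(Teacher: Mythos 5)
Your proposal is correct and follows the same overall strategy as the paper's proof: verify Case 1 of Assumption \ref{a:CountViaOrthGroups} with $r=0$, obtain the lower bound $1$ from Propositions \ref{pCount2} and \ref{p:CountCase1}, upgrade this to an exact count by proving surjectivity of $\rho$, which reduces to surjectivity of $\mathrm{O}(E_8(2)) \to \mathrm{O}(D(E_8(2)))$, and conclude via Lemma \ref{lem: no square 6} together with the trivial observation $X \in HFM(X,E_8(2))$. The one genuine difference is how the surjectivity is established. The paper does it computationally: a Magma calculation gives $348364800$ for the order of the image, which is then matched against the order formulas of Theorem \ref{t:OrderOrthF2} after a further machine check that the discriminant form has trivial radical. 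You instead use that $\mathrm{O}(E_8(2)) = \mathrm{O}(E_8) = W(E_8)$ and that, under the identification of $(D(E_8(2)), q)$ with $E_8/2E_8$ equipped with $v \mapsto \tfrac{1}{2}\langle v,v\rangle \bmod 2$, the map in question is the classical reduction $W(E_8) \twoheadrightarrow \mathrm{O}^+(8,\bF_2)$ with kernel $\{\pm \mathrm{id}\}$. This is a legitimate and arguably cleaner route: it replaces computer algebra by a standard classical fact. It is worth noting that your identification of the form as plus (hyperbolic) type is the consistent one: the paper's text asserts the discriminant form is of ``type 3'' in Grove's terminology, but the order it quotes, $348364800$, is precisely the type 2 (plus type) order from Theorem \ref{t:OrderOrthF2}, the type 3 order in dimension $8$ over $\bF_2$ being $394813440$; equivalently, the $120$ classes of roots $\pm v$ exhaust the anisotropic classes of $E_8/2E_8$, confirming plus type. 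So your argument stands, and incidentally corrects a mislabeling in the paper's proof. Your remark that the factor $\{\pm\mathrm{id}_T\} \to \{\pm\mathrm{id}_{D(T)}\}$ poses no obstruction is also correct and is left implicit in the paper.
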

\begin{proof}
    Note first that by either Lemma \ref{lem: no square 6} or Proposition \ref{prop: case 1}, the set $VFM(X,E_8(2))$ is isomorphic to the set $HFM(X,E_8(2))$. Set $K:=E_8(2)$ in what follows. By Proposition \ref{pCount2}, a lower bound for $|VFM(X,K)|$ is the number of $\overline{G}_{K,T}$-orbits in $\calH_{K,T}.$ By Proposition \ref{p:CountCase1}, this is equal to 1.

    To ensure that the number of $\overline{G}_{K, T}$-orbits in $\calH_{K, T}$ is equal to the number of Hodge-theoretic Fourier--Mukai partners with $K=E_8(2)$, we can again use Proposition \ref{pCount2} if we show that the homomorphism $\rho \colon G_{K, T} \to \overline{G}_{K, T}$ is surjective. Equivalently, we want 
\[
\mathrm{O}(E_8(2))) \to \mathrm{O}(D(E_8(2)))
\]
to be surjective. 

A Magma calculation \cite{MMSO_Files}  shows that the order of the image of the preceding homomorphism is 
\[
348364800. 
\]
One needs to check that this equals the order of the corresponding orthogonal group of $\bF_2^8 \simeq D(E_8(2))$. By Theorem \ref{t:OrderOrthF2}, we observe that this number is precisely the order of $\mathrm{O}(V)$ for a nondefective quadratic space $V$ of dimension $8$ and type $3$ over $\bF_2$. Thus it remains to prove that $D(E_8(2))$ with its discriminant quadratic form is a space of that type. It is enough to show that $V$ is nondefective, i.e. the radical is zero. Indeed, then 348364800 has to divide the order of $\mathrm{O}(V)$, but it only divides that order for a type 3 space $V$ of dimension 8, not a type 2 space. It remains to compute the bilinear form 
\[
b(x,y) = q(x+y) - q(x) -q(y)
\]
on $D(E_8(2))$ and check it has trivial radical. This follows from a direct computation with Magma \cite{MMSO_Files}. Thus $|VFM(X,E_8(2))|=1$; clearly $X\in VFM(X,E_8(2)).$
\end{proof}

\begin{proposition}\label{prop: non trivial inv  partners}
    We have $|HFM(X,E_6(2)\oplus A_2(2))|=1120$.
\end{proposition}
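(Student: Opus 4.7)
The plan is to apply the counting framework of Sections \ref{sec: the virtual count} and \ref{sec: actual count} with primitive algebraic lattice $K = E_6(2) \oplus A_2(2)$. I first verify that we are in Case 2 of Assumption \ref{a:CountViaOrthGroups}: direct computation yields $D(E_6(2)) = (\bZ/2)^6 \oplus \bZ/3$ and $D(A_2(2)) = (\bZ/2)^2 \oplus \bZ/3$, so that $D(K) = (\bZ/3)^2 \oplus (\bZ/2)^8$ while $D(T) = \bZ/3 \oplus (\bZ/2)^8$. Thus we are in Case 2 with $r=2$, and the $3$-parts of the discriminant forms are nondegenerate. Proposition \ref{p:CountCase2} then gives a unique $\overline{G}_{K,T}$-orbit on $\calH_{K,T}$.

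Next I check that the virtual and actual counts coincide. Both $E_6(2)$ and $A_2(2)$ are obtained by doubling even lattices of minimum $2$, so every nonzero vector of either summand has square in $\{4, 8, 12, \dots\}$; hence $K$ contains no vectors of square $6$ at all, and Lemma \ref{lem: no square 6} applies to give $|HFM(X,K)| = |VFM(X,K)|$.

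Third, I would compute the number $|VFM(X,K)|$ of $G_{K,T}$-orbits on $\calH_{K,T}$, which by Proposition \ref{pCount2} is our target. Since $\overline{G}_{K,T}$ acts transitively on $\calH_{K,T}$, the $G_{K,T}$-orbit count equals the number of double cosets
\[
\bigl| \rho(G_{K,T}) \backslash \overline{G}_{K,T} / \Stab_{\overline{G}_{K,T}}(H_0) \bigr|
\]
for any representative $H_0 \in \calH_{K,T}$. I would determine the image of $\rho \colon G_{K,T} \to \overline{G}_{K,T}$ and the relevant stabilizer in Magma and OSCAR, in the spirit of the computations for $E_8(2)$ in the proof of the preceding proposition (but now without the expectation that $\rho$ is surjective).

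The resulting count of $1120$ matches the number of $A_2$ root subsystems of $E_8$ (Lemma \ref{l:A2SubsysttemsE8}), and this coincidence has a geometric explanation: each $A_2 \subset E_8$ gives an index-$3$ inclusion $A_2(2) \oplus E_6(2) \subset E_8(2) = A(X)_{prim}$, corresponding to a non-syzygetic pair of cubic scrolls on $X$, and the Fourier--Mukai partner $Y$ with $A(Y)_{prim} \cong K$ is then produced from this pair via the construction of \cite{BBM25}. The main obstacle is the explicit lattice-theoretic verification that the double coset count is exactly $1120$: absent a closed-form group-theoretic argument, one must perform direct machine computations of $\rho(G_{K,T})$ and its action on $\calH_{K,T}$, more elaborate than those needed for the preceding proposition where $\rho$ was surjective.
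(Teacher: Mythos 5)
Your reductions are correct and coincide with the paper's own first steps: the computation $D(K)=(\bZ/3)^2\oplus(\bZ/2)^8$, $D(T)=\bZ/3\oplus(\bZ/2)^8$ does place you in Case 2 of Assumption \ref{a:CountViaOrthGroups}, and your observation that every vector of $E_6(2)\oplus A_2(2)$ has square divisible by $4$ (so there are no square-$6$ vectors at all) is in fact a cleaner justification for invoking Lemma \ref{lem: no square 6} than the paper spells out. The reformulation of the $G_{K,T}$-orbit count as a double coset count $\rho(G_{K,T})\backslash\overline{G}_{K,T}/\Stab_{\overline{G}_{K,T}}(H_0)$ is also legitimate, given the transitivity supplied by Proposition \ref{p:CountCase2} and Proposition \ref{pCount2}.

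The genuine gap is that the number $1120$ is never derived: you explicitly defer the double coset computation to an unspecified machine calculation, and the only support offered for the answer is the numerical coincidence with Lemma \ref{l:A2SubsysttemsE8}, which is motivation, not proof. The paper closes exactly this gap with two ingredients. First, concrete computations: $\rho_{E_6(2)}$ and $\rho_{A_2(2)}$ are each surjective onto the orthogonal groups of their discriminant forms, and $\mathrm{im}(\rho_K)$ is precisely their product, of order $12\cdot 103680=1244160$, while $|\mathrm{O}(D(K))|=348364800\cdot 4=1393459200$. Second, a structural argument that turns these two orders into a closed-form quotient rather than a sum over double cosets of varying sizes: by Witt's theorem $\mathrm{O}(D(K))$ acts transitively on graphs of injective anti-isometries $D(T)\to D(K)$, the stabilizer of each graph is a $\bZ/2$ (fixing the image pointwise and acting by $-1$ on its orthogonal line in $D(K)_3$), and this $\bZ/2$ lies inside $\mathrm{im}(\rho_K)$ because $\rho_K$ surjects onto the $3$-part of $\mathrm{O}(D(K))$. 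Only with this containment does every $\mathrm{im}(\rho_K)$-orbit have the same size $|\mathrm{im}(\rho_K)|/2$, giving
\[
\frac{|\mathrm{O}(D(K))|/2}{|\mathrm{im}(\rho_K)|/2}=\frac{1393459200}{1244160}=1120.
\]
Without either this stabilizer-containment argument or an actually executed double coset enumeration, your proposal establishes only that the count is \emph{some} number at least $1$; the equality with $1120$ remains an assertion.
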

\begin{proof}

    Let $K:=E_6(2)\oplus A_2(2)$. We see that the set $VFM(X,K)$ is isomorphic to $HFM(X,K)$ by Lemma \ref{lem: no square 6}. 
    A lower bound for $|VFM(X,K)|$ is again the number of $\overline{G}_{K,T}$-orbits in $\calH_{K,T}.$ By Proposition \ref{p:CountCase2}, it is equal to $1$. 

    Again, we have to check if the map $\rho_K \colon \mathrm{O}(K) \to \mathrm{O}(D(K))$ is surjective or not. In this case, it turns out to not be surjective: 

    Using Magma \cite{MMSO_Files}, one finds the following:
\begin{enumerate}
\item
The order of the image of $\rho_K$ is equal to 
\[
1244160.
\]
\item 
The image of 
\[
\rho_{A_2(2)}\colon \mathrm{O}(A_2(2)) \to \mathrm{O}(D(A_2(2)))
\]
has order $12$, so this homomorphism is surjective.
\item 
The image of 
\[
\rho_{E_6(2)}\colon \mathrm{O}(E_6 (2)) \to \mathrm{O}(D(E_6 (2))) 
\]
has order $103680$. Recall that 
\[
D(E_6 (2)) = (\bZ/2)^6 \times \bZ/3
\]
and the discriminant forms $q_{E_6(2),2}$ and $q_{E_6(2),3}$ can be checked to be non-defective resp. non-degenerate. The orthogonal group of $\bZ/3$ thus has order $2$. By Theorem \ref{t:OrderOrthF2}, the order of the orthogonal group of a six-dimensional $V\simeq (\bZ/2)^6$ of type $3$ is $51840$ and $2\cdot 51840=103680$. This proves both that $((\bZ/2)^6, q_{E_6(2),2})$ must be of type $3$, not type $2$, for divisibility reasons, and that $\rho_{E_6(2)}$ is surjective. 
\item 
The image of $\rho_K$ is precisely the product of the images of $\rho_{A_2(2)}$ and $\rho_{E_6(2)}$. Indeed, the former contains the latter and has the same order
\[
12\cdot 103680 = 1244160.
\]
\item 
We already know that the order of the orthogonal group of the $(\mathbb{Z}/2)^8$-part of $D(K)$ is equal to $348364800$ since it agrees with the $(\mathbb{Z}/2)^8$-part of $D(E_8(2))$, which we examined above. Looking back at how the gluing construction works, we see that the $(\bZ/3)^2$-part of $D(K)$ has form $\mathrm{diag}(+1,-1)$ and the orthogonal of that part $(\bZ/3)^2$ is thus equal to $4$. Thus 
\[
|\mathrm{O}(D(K))| = 348364800 \cdot 4 = 1393459200.
\]
Thus
\[
\frac{|\mathrm{O}(D(K))| }{|\mathrm{im}(\rho_K)|} = 1120. 
\] 
\end{enumerate}

In summary, in this case we need to distinguish the notions of $\overline{G}_{K, T}$-orbits in $\calH_{K, T}$ from that of $G_{K, T}$-orbits in $\calH_{K, T}$, and need to count the latter to obtain the number of (a priori Hodge-theoretic) Fourier--Mukai partners with primitive algebraic lattice isomorphic to $K$. By the above calculation, this number is equal to $1120$. Indeed, this number is equal to the number of $\{ \pm 1\} \times \mathrm{O}(K)$-orbits of totally isotropic subgroups of $D(T) \times D(K)$ that are graphs of injective anti-isometries $\alpha \colon D(T) \to D(K)$. We recall that
\[
D(T) \simeq \bZ/3 \times (\bZ/2)^8, \quad D(K) \simeq (\bZ/3)^2 \times (\bZ/2)^8 .
\]
Note that $\{- 1\} \times \{ - 1\} \subset \{ \pm 1\} \times \mathrm{O}(K)$ stabilises each such graph, therefore we can also count $\mathrm{O}(K)$-orbits of such graphs and get the correct number. 
By Witt's theorem, there are $\mathrm{O}(D(K))/2$ many such graphs of anti-isometries. The group $\mathrm{O}(K)$ acts on the set of such graphs, and its image in $\mathrm{O}(D(K))$ contains the subgroup isomorphic to $\bZ/2$ in $\mathrm{O}(D(K))$ that stabilises each graph because $\mathrm{O}(K) \to \mathrm{O}(D(K))$ is surjective onto the $3$-part of $\mathrm{O}(D(K))$. Therefore the number of (Hodge-theoretic) Fourier--Mukai partners with primitive algebraic lattice isomorphic to $K$ is indeed
\[
\frac{|\mathrm{O}(D(K))|/2 }{|\mathrm{im}(\rho_K)|/2}= \frac{|\mathrm{O}(D(K))| }{|\mathrm{im}(\rho_K)|} = 1120. 
\] 
\end{proof}

\subsection{Counting actual Fourier--Mukai partners}
We will prove that all of the $1120$ non-trivial Hodge-theoretic Fourier--Mukai partners of $X$ are actual (derived) Fourier--Mukai partners. As a by-product, we will give a geometric construction of all $1120$ partners with primitive algebraic lattice $E_6(2)\oplus A_2(2)$ in Proposition \ref{prop: non trivial inv  partners} using Gale duality of cubic fourfolds with a nonsyzygetic labelling, as in the article \cite{BBM25}.

\begin{lemma}\label{l:A2SubsysttemsE8}
There are exactly $1120$ root subsystems of type $A_2$ of the root system $E_8$.
\end{lemma}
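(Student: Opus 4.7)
The plan is to count, in two ways, the number of ordered pairs of roots $(\alpha,\beta) \in E_8 \times E_8$ with $\langle \alpha, \beta \rangle = -1$ (normalising so that every root has square $2$). Such pairs are in natural bijection with ordered simple systems of $A_2$-subsystems, since any two roots with inner product $-1$ span the $A_2$ system $\{\pm\alpha,\, \pm\beta,\, \pm(\alpha+\beta)\}$, and conversely every $A_2$-subsystem arises this way.

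First, I would count these ordered pairs globally. The root system $E_8$ has $240$ roots, and for any fixed root $\alpha$ the orthogonal complement $\alpha^\perp \cap E_8$ is isomorphic to the root system $E_7$, which has $126$ roots. Using the symmetry $\beta \mapsto -\beta$, the remaining $240 - 2 - 126 = 112$ roots (those neither equal to $\pm\alpha$ nor orthogonal to $\alpha$) split evenly: $56$ satisfy $\langle \alpha, \beta\rangle = 1$ and $56$ satisfy $\langle \alpha, \beta\rangle = -1$. This yields $240 \cdot 56 = 13440$ ordered pairs with inner product $-1$.

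Second, I would count the same quantity by summing over $A_2$-subsystems. A direct enumeration within a single $A_2 = \{\pm\alpha, \pm\beta, \pm(\alpha+\beta)\}$ shows that each of the six roots has exactly two partners at inner product $-1$ inside the subsystem, giving $12$ ordered pairs per $A_2$. Hence the number of $A_2$-subsystems equals $13440/12 = 1120$, as claimed.

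The only step that is not immediate from direct inspection is the assertion $\alpha^\perp \cap E_8 \cong E_7$; this is a standard fact and can either be taken as input or verified using an explicit root realisation of $E_8$. As a cross-check, one may alternatively invoke the transitivity of $W(E_8)$ on $A_2$-subsystems, with setwise stabiliser an extension of $\operatorname{Aut}(A_2)$ (of order $12$) by $W(E_6)$ (of order $51840$); this yields $|W(E_8)|/(12 \cdot 51840) = 696729600/622080 = 1120$, agreeing with the direct count.
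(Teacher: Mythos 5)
Your proof is correct and follows essentially the same double-counting argument as the paper: $240\cdot 56 = 13440$ ordered pairs of roots with inner product $-1$, divided by the $12$ such ordered pairs inside each $A_2$-subsystem, giving $1120$. The only differences are cosmetic improvements: you justify the count of $56$ via the fact that $\alpha^\perp\cap E_8\cong E_7$ (which the paper simply asserts), and you add an independent orbit-stabilizer cross-check with $|W(E_8)|/(12\cdot|W(E_6)|)=1120$.
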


\begin{proof}
The root lattice $E_8$ (here we work with the positive definite variant) has $240$ roots, vectors $v$ with $v^2=2$. Having chosen $v$, there are $56$ other roots $w$, such that $v\cdot w = -1$. 
This gives $240\cdot 56= 13440$ ordered pairs of roots $(v,w)$ such that the intersection matrix of the sublattice spanned by $v,w$ with respect to the ordered basis $(v,w)$ is 
\[
\begin{pmatrix}
2 & -1\\
-1 & 2 
\end{pmatrix}. 
\]
Now fixing a root subsystem of type $A_2$, there are $12$ ordered pairs $(\alpha , \beta )$ of roots in that subsystem such that the matrix with respect to the basis $(\alpha , \beta)$ has the above form - indeed, we can pick any of the six roots of $A_2$ and then one of the two roots having inner product $-1$ with the first root. Thus the number of $A_2$-subsystems of $E_8$ is 
\[
\frac{13440}{12} = 1120. 
\]
\end{proof}

To construct the $1120$ Fourier--Mukai partners from Proposition \ref{prop: non trivial inv  partners} explicitly, we first recall a few facts about the geometry of (very general) cubic fourfolds $X$ with a symplectic involution of type $\phi_2$ following \cite{Mar23} and \cite{Sturmfels18}. 

The involution $\phi_2$ fixes a linear subspace  $\Pi \simeq \bP^3$ and a complementary line $l\subset X$, hence also the cubic surface $S=X\cap \Pi$. Projection from $l$ exhibits $\mathrm{Bl}_l(X)$ as a conic bundle over $\Pi$ with discriminant locus the union of the cubic surface $S$ and a quadric cone $Q$ in $\Pi$. The intersection $C=S\cap Q \subset \Pi$ is a genus four curve in $\bP^3$ parametrizing fibers of the conic bundle that are double lines. The curve $C$ is a space sextic and a so-called uniquely trigonal genus $4$ curve, see \cite[Section 2]{Sturmfels18}. Let 
\[
\varphi\colon \Sigma \to Q \subset \bP^3
\]
be the induced double cover of $Q$. $\Sigma$ is a degree $1$ del Pezzo surface and the $2:1$ map is induced by $-2K_{\Sigma}$. 

There are $240$ exceptional curves $E_k$ on $\Sigma$. 
We have $\mathrm{Pic}(\Sigma ) \simeq (-K_{\Sigma})\oplus E_8(-1)$. 
The curves $E_k$ correspond in a one-to-one way  to the roots of the $E_8(-1)$ root system via $E_k \mapsto E_k +K_{\Sigma}$. The images of these exceptional curves in $Q$ are conics spanning $120$ tritangent planes $\Gamma_i$ to $C$. (There is also an infinite family of tritangent planes given by planes in $\Pi$ tangent to $Q$. We usually discard the latter and only refer to the former as the tritangent planes). 

Then $H_i = \mathrm{span}(l, \Gamma_i) \subset \bP^5$ is a $\bP^3$ that intersects $X$ in a cubic threefold with six nodes in general position. Hence $H_i$ is a so-called \emph{symmetroid hyperplane}, and each of these contains two families of scrolls $[T_i], [T_i']$ with $[T_i]+[T_i']=2\eta_X$. 

\begin{theorem}\label{tExplicitConstruction}
Let $X$ be a very general cubic fourfold with an involution of type $\phi_2$. Then an equation for $X$ can be written in 1120 ways as 
\[
\det (M) + L_1L_2L_3 =0
\]
where $M$ is a $3\times 3$ matrix of linear forms and $L_i$ are linear forms, such that the corresponding 1120 Gale dual cubics are non-isomorphic derived Fourier--Mukai partners of $X$. Each of these has an involution of type $\phi_1$ and in general exactly one Eckardt point. 
\end{theorem}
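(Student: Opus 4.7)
The plan is to combine three ingredients: the combinatorial count of $1120$ $A_2$-subsystems of the $E_8$ root system from Lemma \ref{l:A2SubsysttemsE8}, the lattice-theoretic count $|HFM(X, E_6(2)\oplus A_2(2))| = 1120$ from Proposition \ref{prop: non trivial inv partners}, and the explicit determinantal/Gale-dual construction of \cite{BBM25}, which attaches to every non-syzygetic pair of cubic scrolls on $X$ both a defining equation of the shape $\det(M) + L_1 L_2 L_3 = 0$ and a Gale dual cubic $Y$ that is a genuine derived Fourier--Mukai partner of $X$.

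First I would establish a bijection between non-syzygetic pairs of cubic scrolls on $X$ and $A_2$-subsystems of the $E_8$ root system inside $A(X)_{prim} \cong E_8(2)$. Since $A(X)_{prim}$ is spanned by cubic scroll classes, a non-syzygetic pair in the sense of \cite{BBM25} spans a primitive $A_2(2)$-sublattice of $E_8(2)$, equivalently, after rescaling by $\tfrac{1}{2}$ on the form, an $A_2$-subsystem of $E_8$. Conversely, under the correspondence recalled in Section \ref{s:SymplecticInvolution} between roots of $\mathrm{Pic}(\Sigma)$, the $240$ exceptional curves on the degree-one del Pezzo $\Sigma$, tritangent planes $\Gamma_i$ to the genus-four curve $C$, and cubic scrolls in the symmetroid hyperplanes $H_i$, every such subsystem lifts uniquely to a non-syzygetic pair on $X$. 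Lemma \ref{l:A2SubsysttemsE8} then gives exactly $1120$ such pairs.

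Next I would feed each non-syzygetic pair into the construction of \cite{BBM25}: this produces the determinantal expression $\det(M) + L_1L_2L_3 = 0$ for $X$, together with a Gale dual cubic $Y$ which is a derived Fourier--Mukai partner of $X$. Since $Y \not\cong X$ and $\mathscr{K} = \{E_8(2),\, E_6(2) \oplus A_2(2)\}$, the lattice $A(Y)_{prim}$ must be $E_6(2) \oplus A_2(2)$. One now has a map from a set of size $1120$ (non-syzygetic labellings) into the set $HFM(X, E_6(2)\oplus A_2(2))$ of Hodge-theoretic Fourier--Mukai partners, which also has size $1120$ by Proposition \ref{prop: non trivial inv partners}. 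The cardinality match, together with the fact that the $A_2(2)$-summand of $A(Y)_{prim}$ is induced by the chosen labelling (so that distinct labellings produce distinct discriminant-form gluings of $E_6(2)\oplus A_2(2)$ with $T$ in the overlattice $L = H^4(Y,\bZ)_{prim}$, in the sense of Theorem \ref{theorem:glueing}), forces this map to be a bijection. Hence all $1120$ partners are pairwise non-isomorphic and every Hodge-theoretic partner in this family is realized as a genuine derived Fourier--Mukai partner by an explicit $\det(M) + L_1L_2L_3$ presentation of $X$.

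Finally, each partner satisfies $E_6(2) \subset A(Y)_{prim}$, so by Theorem \ref{thm: inv}(a) it carries an anti-symplectic involution $\phi_1$; genericity of $X$ passes to genericity of $Y$ within the Eckardt locus by the countability of the moduli, giving exactly one Eckardt point on $Y$. I expect the main technical obstacle to be the injectivity direction of the geometric-to-Hodge map: the cleanest route is the cardinality comparison indicated above, but a direct argument would require one to verify that two distinct labellings on $X$ produce genuinely different isotropic subgroups $H_L \subset D(K) \oplus D(T)$ of Theorem \ref{theorem:glueing}, which in turn reduces to tracking how the $A_2(2)$-sublattice spanned by the pair is carried into $D(K)$ by the Gale dual construction, and then invoking Assumption \ref{assumption 1} (Torelli) to rule out hidden isomorphisms between the resulting $Y$'s.
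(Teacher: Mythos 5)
There is a genuine gap, and it sits exactly at the heart of the theorem: the claim that the $1120$ Gale dual cubics are pairwise non-isomorphic. Your argument for this is the cardinality comparison between the set of labellings (size $1120$ by Lemma \ref{l:A2SubsysttemsE8}) and $HFM(X, E_6(2)\oplus A_2(2))$ (size $1120$ by Proposition \ref{prop: non trivial inv  partners}), but equal cardinality of finite source and target does not force a map between them to be a bijection; you need injectivity or surjectivity independently, and you yourself defer precisely this point (``the main technical obstacle'') to an unproven ``tracking'' argument. Worse, the injectivity criterion you sketch is not even the right one: producing \emph{distinct isotropic subgroups} $H_L \subset D(K)\oplus D(T)$ is not sufficient, because by Corollary \ref{c:Orbits} isomorphism classes of partners correspond to $G_{K,T}$-\emph{orbits} of gluings, so you would need the $1120$ labellings to land in $1120$ distinct orbits --- and since there are exactly $1120$ orbits in total, that statement is equivalent to the bijectivity you are trying to establish, so the argument is circular. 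Two smaller problems: (i) you conflate non-syzygetic pairs with $A_2$-subsystems --- each $A_2$-subsystem corresponds to a \emph{triple} of symmetroid hyperplanes and hence to three non-syzygetic pairs all yielding the same equation $\det(M)+L_1L_2L_3$, so there are $3360$ pairs but $1120$ equations; (ii) your Eckardt argument does not work as stated: Theorem \ref{thm: inv}(a) is stated in the direction ``$\phi_1$-involution $\Rightarrow$ $A_{prim}\cong E_6(2)$,'' not the converse, and $Y$ is \emph{not} a general member of the $14$-dimensional Eckardt family (its primitive algebraic lattice has rank $8$, so $Y$ moves in a $12$-dimensional subfamily), so genericity within the Eckardt locus cannot be invoked to conclude there is exactly one Eckardt point.

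For contrast, the paper's proof takes a completely different, computational route: since the conditions in the statement are open in the family of $\phi_2$-cubics, it suffices to exhibit a single explicit example. The paper constructs one from eight explicit points in $\bP^2$ (working modulo a large prime in Macaulay2/Magma), builds the degree-one del Pezzo surface, the $120$ symmetroid hyperplanes, and all determinantal representations $F=\det(M_{ij})+h_ih_jh_k$ by explicit ansatz, makes the Gale dual construction of \cite{BBM25} $\bZ/2$-equivariant to produce the $\phi_1$-involution and Eckardt point on each dual, counts Eckardt points via the degree of the rank-$\le 2$ locus of polar quadrics, and --- crucially --- distinguishes the duals by computing the invariant $I_8^2/I_{16}$ of the cubic surfaces at their Eckardt points, obtaining $1120$ distinct values (each occurring three times, reflecting the pairs-versus-triples count above). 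The non-isomorphy is thus a verified computation, not a consequence of the lattice-theoretic count; if you want a conceptual proof along your lines, you would need a genuine injectivity or surjectivity argument (e.g.\ via monodromy equivariance of the map from labellings to $G_{K,T}$-orbits), which is currently missing.
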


\begin{proof}
It suffices to exhibit one example of such an $X$. 

Consider the following eight points $\mathcal{P}=\{P_1, \dots , P_8\}$ in $\bP^2$ defined over $\bZ$ (the Macaulay2 computations below are then done reducing into characteristic $100,000,007$): 
\[
\begin{pmatrix}
1 & 0 & 0 & 1 & 1 & 1 & 1 & 1 \\
0 & 1 & 0 & 1 & 2 & 3 & 4 & 5 \\
0 & 0 & 1 & 1 & 398 & 1704 & 4449 & 4665
\end{pmatrix}.
\]
The blowup $\Sigma = \mathrm{Bl}_{\mathcal{P}}(\bP^2)$ is a degree $1$ del Pezzo surface. 

Let $u_0, u_1$ be a basis for the cubic curves through $\mathcal{P}$, i.e. the anti-canonical system of $\Sigma$. Then $h^0 (-2K_{\Sigma})=4$ and contains $u_0^2, u_0u_1, u_1^2$. Let $v$ be a further sextic with nodes in $\mathcal{P}$ completing this to a basis. The linear system with this basis maps $\Sigma$ to $\bP^3$ with coordinates $x_2, \dots , x_5$ and the image $Q$ is given by 
\[
\det \begin{pmatrix}
    x_2 & x_3 \\
    x_3 & x_4
\end{pmatrix} = 0.
\]
Let $C$ be the branch curve of the double cover $\varphi$ as above. There are is a $\bP^4$ of cubics containing $C$. We choose an explicit one $S$ with an equation $s=0$ given by  
\begin{align*}
&x_0^2x_2+5x_2^3+2x_0x_1x_3 
-44352523x_2^2x_3+5x_2x_3^2+5x_3^3+x_1^2x_4-25248805x_2^2x_4\\
&-35620459x_2x_3x_4+5x_3^2x_4+35659795x_2x_4^2-35376841x_3x_4^2 
-27300683x_4^3\\
&-14083726x_2^2x_5+25749314x_2x_3x_5+5x_3^2x_5
+12756811x_2x_4x_5+1156676x_3x_4x_5\\
&+49572192x_4^2x_5
+41713330x_2x_5^2+12858843x_3x_5^2-36773330x_4x_5^2-
32382654x_5^3.   
\end{align*}
Consider the conic bundle over $\bP^3$ branched in $S\cup Q$ given by
\[
N= \begin{pmatrix}
    x_2 & x_3 & 0 \\
    x_3 & x_4 & 0  \\
    0   & 0 & s
\end{pmatrix}
\]
and the corresponding cubic $X$ given by 
\[
F:=(x_0, x_1, 1) N (x_0, x_1, 1)^t = 0. 
\]
The exceptional curves $E_i$ can be explicitly constructed as curves in $\bP^2$ with prescribed singularities in $\mathcal{P}$ \cite{Sturmfels18}. The 240 exceptional curves come in 120 pairs such that the curves in each pair have intersection number $3$. The curves in each such pair have the same image under $\varphi$. We thus obtain 120 tritangent planes as above. The three tangent points are the images of the three intersection points.  In this way we get 120 explicit symmetroid hyperplanes $H_i$ as above. 

The cubic threefolds $Y_i =X\cap H_i$ are 6-nodal and can be written as $\det (T_i)$ where $T_i$ is a $3\times 3$ matrix of linear forms such that $\phi_2 (T_i)=T_i^t$. We find such matrices by the following ansatz. Let
\[
M = 
\begin{pmatrix}
   0 & x_0 & 0 \\
- x_0 & 0 & x_1 \\
0 & -x_1 & 0
\end{pmatrix}
+
\begin{pmatrix}
x_4 & z_0 & x_3 \\
z_0 & z_1 & z_2&  \\
x_3 & z_2 & x_2
\end{pmatrix}
\]
with $z_i = \sum_{j=2}^4 q_{i,j}x_j$ and $q_{i,j}$ unknowns, and solve for 
\[
    \det M = Y_i.
\]
This works in all $1120$ cases.
Notice that the $x_0,x_1$ part is antisymmetric and the $x_2,x_3,x_4$-part is symmetric, so $\phi_2$ will act on $M$ by transposition.

Let $E_i$ and $E_j$ be two exceptional curve with intersection number $2$. Then the scrolls given by the rows and columns of the matrices $T_i$ and $T_j$ form a nonsyzygetic pair in the sense that they lie in different hyperplanes and intersect with intersection number $1$ or $5$. We can then find a matrix $M_{ij}$ such that in the symmetroid hyperplane $H_i$ it reduces to $T_i$ and in the $H_j$ to $T_j$  \cite[Prop. 2.2]{BBM25}. We find $M_{ij}$ by an ansatz. Let
\[
M_{ij} = M_i + x_5M_{sym}
\]
where $M_{sym}$ is a generic symmetric matrix. Then eliminate $x_5$ again by using the equation $h_j$ to obtain $N_j$ which depends on the entries of $M_{sym}$. We then  solve the equation
\[
    \det N_j = \lambda \det M_j.
\]
and use the solution $M_{sym}$ to define $M_{ij}$.
This implies that $F-\det M_{ij}$ is divisible by $h_i$ and $h_j$ and therefore  there is another symmetroid hyperplane $H_k$ such that one can write 
\[
F= \det (M_{ij}) + h_ih_jh_k 
\]
where $h_i$ is an equation of $H_i$. 

The construction of Gale dual cubics  from \cite[Definition 4.1]{BBM25} can be made $\bZ/2$ equivariant in the following way: let $v$ be a vector in $\bC^{12}$ and consider the first nine entries as a $3\times 3$ matrix (using lexicographic order) 
\[
\begin{pmatrix}
t_1 & t_4 & t_7 \\
t_2 & t_5 & t_8 \\
t_3 & t_6 & t_9
\end{pmatrix}
\quad \mapsto \quad 
\begin{pmatrix}
t_1 & t_2 & t_3 & t_4 & t_5 & t_6 & t_7 & t_8 & t_9
\end{pmatrix}
\]
and let $\phi$ act by transposing this matrix. The map 
\[
\bC^{12} \xrightarrow{(M_{ij}, h_i, h_j, h_k)} \bC^{6} 
\]
is then equivariant with $\phi_2$ acting on $\bC^6$. Then $\phi$ has three negative eigenvalues on $\bC^{12}$ corresponding to anti-symmetric matrices. Let now
\[
\bC^6 \xrightarrow{(M'_{ij}, h_i', h_j', h_k')^t} \bC^{12}
\]
be the kernel of the preceding displayed map. Since $\phi_2$ has two negative eigenvalues, the restriction $\phi_1$ of $\phi$ to the kernel has one negative eigenvalue. Hence the Gale dual cubic $X_{ij}'$
\[
F_{ij}' = \det (M_{ij}') - h_i'h_j'h_k' =0 
\]
has an involution of type $\phi_1$ and has an Eckardt point $e_{ij}$. It can be checked that this is the only Eckardt point on each of these cubics: 
We count the Eckardt points by computing the degree of the locus where the polar quadric has rank at most $2$ as in \cite{Coskun15} and \cite[Lemma 3.14]{BoBo25}. 

Intersecting $X_{ij}'$ with its tangent space at $e_{ij}$ and afterwards projecting from $e_{ij}$ gives a cubic surface $E_{ij}$. The isomorphism class of $E_{ij}$ is an invariant of the isomorphism class of $X_{ij}$. We consider the invariant $I_8^2/I_{16}$ of \cite[Prop. 12]{ElsenhansJahnel}. Computing this invariant in characteristic $100,000,007$ for each $E_{ij}$ yields $1120$ different values, each one occurring three times since we get the equation 
\[
F= \det (M_{ij}) + h_ih_jh_k 
\]
also from $M_{ik}$ and $M_{jk}$.

This computation took about 10 hours on a MacBook Pro.
\end{proof}

\begin{proposition}\label{p:DualInvolutionCubics}
Let $X$ be a very general cubic fourfold with an involution of type $\phi_2$ and $Y$ one of the 1120 Fourier--Mukai partners with an involution of type $\phi_1$  constructed above. 

We have
\begin{enumerate}
\item 
$X$ and $Y$ are birational. 
\item 
$X$ and $Y$ are Fourier--Mukai partners. 
\item 
The Kuznetsov components $\mathrm{Ku}(X)$ and $\mathrm{Ku}(Y)$ are \emph{not} $\bZ/2$-equivalent. So Huybrechts' Conjecture, in the equivariant context, holds in this example. 
\item 
$X$ and $Y$ are not $\bZ/2$-birational with their involutions $\phi_1$ and $\phi_2$. 
\item 
The Fano varieties $F(X)$ and $F(Y)$ are \emph{not} $\bZ/2$-birational. 
\end{enumerate}
\end{proposition}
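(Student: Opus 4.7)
Parts (a) and (b) should be immediate from the construction. In Theorem \ref{tExplicitConstruction} each of the $1120$ partners $Y$ is produced as the Gale dual of $X$ along a nonsyzygetic pair of cubic scrolls indexed by a pair of exceptional curves $E_i, E_j\subset \Sigma$ with $E_i\cdot E_j=2$. By the main result of \cite{BBM25}, this Gale dual construction simultaneously provides (i) a birational map $X\dashrightarrow Y$ and (ii) a Fourier--Mukai equivalence $\mathrm{Ku}(X)\simeq \mathrm{Ku}(Y)$, so (a) and (b) follow.

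For (c), (d), (e) the unifying observation is that $\phi_2$ is symplectic while $\phi_1$ is anti-symplectic, hence they act differently on the $(3,1)$-line: $\phi_2^*$ is the identity on the one-dimensional space $H^{3,1}(X)$, whereas $\phi_1^*$ acts as $-1$ on $H^{3,1}(Y)$. I would transport this sign incompatibility to each of the three categories in question.

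For (e), I would use the Beauville--Donagi correspondence $H^4(X,\bZ)_{\mathrm{prim}}(-1) \simeq H^2(F(X),\bZ)_{\mathrm{prim}}$ (and similarly for $Y$), which identifies $H^{3,1}(X)\simeq H^{2,0}(F(X))$ and is equivariant for the induced involutions $\tilde\phi_2$ on $F(X)$ and $\tilde\phi_1$ on $F(Y)$. Hence $\tilde\phi_2$ is symplectic and $\tilde\phi_1$ anti-symplectic on the respective hyperk\"ahler fourfolds. Now any birational map between smooth projective hyperk\"ahler fourfolds induces an isomorphism on $H^{2,0}$ (holomorphic $2$-forms are birational invariants on smooth projective varieties with trivial canonical bundle in the HK case); if this map were $\bZ/2$-equivariant, the induced isomorphism of one-dimensional spaces would have to intertwine $+1$ and $-1$, which is impossible. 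Part (d) then follows from (e), because a $\bZ/2$-birational map $X\dashrightarrow Y$ induces a $\bZ/2$-birational map $F(X)\dashrightarrow F(Y)$. For (c), a $\bZ/2$-equivariant equivalence $\mathrm{Ku}(X)\simeq \mathrm{Ku}(Y)$ would yield an equivariant Hodge isometry $\tilde H(X)\simeq \tilde H(Y)$ of the Addington--Thomas Hodge structures; under the inclusion $H^{3,1}(X)\hookrightarrow \tilde H(X)_{\bC}$ as the $(2,0)$-line, the induced actions on the two one-dimensional $(2,0)$-spaces are again $+1$ versus $-1$, contradiction.

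The only real work lies in confirming that the $(3,1)$-action of a (anti)symplectic involution on $X$ indeed transports to the expected $(2,0)$-action on $F(X)$ and on $\tilde H(\mathrm{Ku}(X),\bZ)$; this is standard but worth stating precisely, since the whole argument for (c)--(e) hinges on it. Once these compatibilities are recorded, the three obstructions are essentially the same one-line sign computation. One subtlety to address: in (c) one must rule out the possibility that a different equivalence (not the one coming from the Gale dual construction) could be made equivariant; this is handled automatically because the Hodge-theoretic obstruction is invariant of the chosen equivalence.
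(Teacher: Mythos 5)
Your parts (a), (b), (c) and (e) are essentially the paper's proof: (a) and (b) are quoted from \cite[Theorem 7.2 c)]{BBM25}, and for (c) and (e) the paper uses exactly your sign argument --- $\phi_2$ symplectic versus $\phi_1$ anti-symplectic forces incompatible actions ($+1$ versus $-1$) on the one-dimensional $(2,0)$-parts of the Addington--Thomas Hodge structures, respectively on $H^{2,0}$ of the Fano varieties via pullback of the holomorphic symplectic forms under a hypothetical equivariant birational map. Those parts are fine.

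The genuine gap is in your deduction of (d) from (e). You assert that a $\bZ/2$-birational map $X\dashrightarrow Y$ between the cubics induces a $\bZ/2$-birational map $F(X)\dashrightarrow F(Y)$ between their Fano varieties of lines. There is no such functoriality: a birational map between cubic fourfolds is undefined on a closed subset and does not send lines to lines, and no construction (equivariant or not) is known that produces a birational map of the Fanos from a birational map of the cubics. Indeed, if such an implication held even non-equivariantly, it would yield that birational cubic fourfolds have birational (hence Hodge-equivalent) Fano varieties, which is an open problem closely tied to Conjecture \ref{Huy conj}; so this step cannot be waved through as routine. The paper proves (d) by a completely different mechanism, namely equivariant Burnside-type invariants of the fixed-point data: the involution $\phi_1$ on $Y$ fixes a hyperplane $\bP^4$, so its fixed locus contains a \emph{divisor} (a smooth $\bZ/2$-invariant cubic threefold hyperplane section), giving a nonvanishing divisorial part in the Burnside symbol of $(Y,\phi_1)$; whereas $\phi_2$ on $X$ fixes only a $\bP^3$ and a line, so the fixed locus of $(X,\phi_2)$ has codimension at least $2$ and its Burnside symbol has no divisorial part. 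Since the divisorial part of the Burnside symbol is a $\bZ/2$-birational invariant, $X$ and $Y$ cannot be $\bZ/2$-birational. To repair your write-up, replace the claimed implication (e) $\Rightarrow$ (d) by this fixed-divisor obstruction (or any other genuine equivariant birational invariant distinguishing a divisorial fixed locus from a codimension-two one).
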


\begin{proof}
Part a) is \cite[Theorem 7.2 c)]{BBM25}.

Part b) is also \cite[Theorem 7.2 c)]{BBM25}. 

Part c) follows because the Addington-Thomas Hodge structures $\widetilde{H}(X)$ and $\widetilde{H}(Y)$ cannot be $\bZ/2$-isomorphic simply because $\phi_2$ is symplectic, but $\phi_1$ is not. So $\phi_2$ acts trivially on $H^{1,3}(X)$, but $\phi_1$ acts nontrivially on $H^{1,3}(Y)$. 

Part d) is clear because $Y$ has a nonvanishing divisorial part in its Burnside symbol coming from the smooth $\bZ/2$-invariant cubic threefold hyperplane section whereas $X$ doesn't have that in its Burnside symbol.

Part e) follows in a very similar way as c), using the fact that $\phi_1$ is anti-symplectic whereas $\phi_2$ is symplectic. Indeed, suppose there was a birational map $f\colon X \dashrightarrow Y$ with $\phi_1\circ f = f \circ \phi_2$. If $\sigma_Y$ and $\sigma_X$ are the holomorphic symplectic forms on $Y$ and $X$, then $f^* \sigma_Y = \lambda \,\sigma_X$ for $\lambda \in \bC^*$, but 
\[
(\phi_1\circ f)^* \sigma_Y = - f^* \sigma_Y, \quad (f \circ \phi_2)^* \sigma_Y = f^* \sigma_Y
\]
a contradiction because $f^* \sigma_Y$ is nonzero. 
\end{proof}

\section{Example 2: Cubic fourfolds with symplectic automorphisms of order $3$}\label{s:SymplecticOrder3}

Next, we consider cubic fourfolds with a symplectic automorphism of order $3$, following \cite{BGM25}.
We will consider a case of automorphism which best highlights all of the technical difficulties of our count. We prove:

\begin{theorem}
    Let $X$ be a general cubic fourfold with symplectic automorphism of order 3 of type $\phi_3^6$ (see Theorem \ref{t:Z3}. Then $X$ has exactly $624$  Fourier--Mukai partners. Of them, $351$ have primitive algebraic lattice isomorphic to $A(X)_{prim},$ and $273$ have a different lattice $K=K_{mystery}$ described more precisely below in Lemma \ref{l:EnumerationLatticesPhi63}.
\end{theorem}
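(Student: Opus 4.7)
The plan is to apply the virtual-to-actual count machinery developed in Sections \ref{sec: the virtual count} and \ref{sec: actual count} to the specific lattice data for a general cubic fourfold $X$ with a symplectic automorphism of order $3$ of type $\phi_3^6$. First, I would fix the input: read off from the classification in \cite{BGM25} the explicit isometry classes of the primitive algebraic lattice $A(X)_{prim}$ and the transcendental lattice $T:=T(X)$, together with their discriminant forms. I would then verify Assumption \ref{assumption 1} (rank of $A(X)$ strictly less than $21$, so the only Hodge isometries of $T$ are $\pm \mathrm{id}$) and Assumption \ref{assumption 3} (the numerical conditions of Theorem \ref{theorem: unique embedding into AT lattice} for the embedding of $T$ into the Addington--Thomas lattice $\widetilde{\Lambda}$). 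Both are routine checks once $T$ is known.

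Next, I would enumerate the set $\mathscr{K}$ of all isometry classes of lattices $K$ arising as $T^{\perp}\subset \Lambda$ for some primitive embedding, subject to the condition that $K$ contains no vector of square $2$. Using Theorem \ref{theorem: prim embedding} of Nikulin, this reduces to enumerating pairs $(H_T,H_\Lambda)$ with an isometry $\gamma$, computing the resulting genus for $K$, and listing genus representatives (with OSCAR/Magma as in the symplectic involution example). I expect $\mathscr{K}$ to consist of exactly two lattices: the lattice $A(X)_{prim}$ itself, and one further lattice $K_{\mathrm{mystery}}$.

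For each $K\in \mathscr{K}$, I would then determine the shape of $D(K)\oplus D(T)$ and check into which of the two cases of Assumption \ref{a:CountViaOrthGroups} it falls, so that Propositions \ref{p:CountCase1} or \ref{p:CountCase2} can be applied to compute the number of $\overline{G}_{K,T}$-orbits in $\calH_{K,T}$. I would then, by direct Magma/OSCAR calculation as in the proof of Proposition \ref{prop: non trivial inv partners}, compute the image of $\rho\colon G_{K,T}\to \overline{G}_{K,T}$, and use Proposition \ref{pCount2} to lift the $\overline{G}_{K,T}$-orbit count to the actual $G_{K,T}$-orbit count, giving $|VFM(X,K)|$. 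The expectation, to match the numbers claimed, is that for $K=A(X)_{prim}$ one obtains $351$ isomorphism classes (including $X$ itself), and for $K=K_{\mathrm{mystery}}$ one obtains the remaining contribution. Summing and subtracting $1$ for $X$ itself recovers the $623$ nontrivial partners of Theorem~\ref{t:order3intro}.

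The main obstacle will be passing from virtual to actual Fourier--Mukai partners, because—unlike in the symplectic involution example where Lemma \ref{lem: no square 6} applies directly—one of the two candidate lattices will typically contain vectors $v$ with $v^2=6$ and $\mathrm{div}_K(v)=3$, triggering the exclusion criterion of \cite[Thm.~1.1]{laz10}. For the lattice where we are in Case 1 of Assumption \ref{a:CountViaOrthGroups}, Proposition \ref{prop: case 1} will automatically imply that every virtual partner is actual. For the remaining lattice, I would invoke Lemma \ref{lem:VFM'}: I would compute, for each overlattice $L \in VFM(X,K)$, whether the class $[v/3]\in D(K)$ of every square-$6$ vector $v$ of divisibility $3$ or $6$ in $K$ lies in the distinguished summand $D(T)(-1)\subset D(K)\cong D(T)(-1)\oplus \bZ/3$ determined by $L$, i.e.\ whether $\mathrm{div}_L(v)=1$. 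The subset $VFM(X,K)'\subset VFM(X,K)$ of classes satisfying this is then in bijection with $HFM(X,K)$. Carrying out this book-keeping via the explicit gluing data and the action of $G_{K,T}$ is where the genuine combinatorial work lies; once it is done the totals $351$ and $273$ follow. Finally, Hodge-theoretic partners are actual Fourier--Mukai partners since by \cite[Thm.~1.1]{kuzcubic}/Theorem \ref{t:TwistedK3s} these cubics have Kuznetsov components equivalent to (possibly twisted) $K3$ categories, and the birationality of $X$ with each $Y$ follows by exhibiting, as in the involution case, an explicit geometric construction of the partner or by invoking rationality of both sides.
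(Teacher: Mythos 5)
Your lattice-theoretic counting plan is essentially the paper's own proof: the same enumeration of $\mathscr{K}$ via Theorem \ref{theorem: prim embedding} (yielding $A(X)_{prim}$ and $K_{mystery}$), the same Case 1/Case 2 split of Assumption \ref{a:CountViaOrthGroups}, Proposition \ref{prop: case 1} together with surjectivity of $\rho$ for the $351$ partners with lattice $A(X)_{prim}$, and Lemma \ref{lem:VFM'} to cut $VFM(X,K_{mystery})$ down to $VFM(X,K_{mystery})'$ for the remaining partners. For the record, the ``book-keeping'' you defer is executed in the paper as follows: the classes $[v/3]$ of the $36$ square-$6$, divisibility-$3$ vectors of $K_{mystery}$ span a single $\mathrm{O}(K)$-invariant hyperplane $H\subset D(K)\simeq (\bZ/3)^8$, the map $\mathrm{O}(K)\to \mathrm{O}(D(K))$ is injective with image acting faithfully on $H$, and $VFM(X,K)'$ is identified with graphs of anti-isometries $D(T)\to H$ modulo $\mathrm{O}(K)$, giving $18341406720/67184640=273$.

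The genuine gap is your final step, the passage from Hodge-theoretic to actual Fourier--Mukai partners, which is what the theorem actually asserts. Theorem \ref{t:TwistedK3s} is conditional: it applies only once one \emph{knows} that the Kuznetsov components in question are equivalent to derived categories of twisted K3 surfaces, and \cite[Conjecture 1.1]{kuzcubic}, which you cite, is the open rationality conjecture, not a criterion for this. Two separate verifications are needed. For the $351$ partners with lattice $A(X)_{prim}$, the Torelli theorem shows each partner again carries a symplectic automorphism of order $3$, and then \cite{ouchi} yields an associated twisted K3 in the derived sense. For the $273$ mystery partners no such automorphism exists (the paper shows they admit no automorphisms at all), so a different argument is required: one exhibits an explicit vector $v\in K_{mystery}$ with $v^2=14$, so that $\langle h^2, v\rangle$ is primitive of discriminant $42$ (primitivity is automatic since $\gcd(3,14)=1$), placing each such partner in $\calC_{42}$ and hence supplying the associated twisted K3. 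Without these two arguments your plan establishes only the count of Hodge-theoretic partners. Relatedly, for birationality only your second option works: the paper emphasizes that no geometric construction of the mystery partners is known, so the involution-style explicit construction is unavailable, and one instead invokes rationality of all the cubics involved via \cite{RSTrisecantFlops} and \cite{BGM25}.
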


\subsection{Counting Hodge theoretic Fourier--Mukai Partners}
We recall from \cite[Thm. 2.2]{BGM25}: 

\begin{theorem}\label{t:Z3}
		Let \(X = \{F=0\} \subset \mathbb{P}^5\) be a smooth cubic fourfold with a symplectic automorphism \(\phi\in\Aut(X)\) of prime order \(3\). After a linear change of coordinates that diagonalizes \(\phi\), we have \(\phi(x_0:\ldots :x_5)=(\xi^{\sigma_0} x_0:\ldots :\xi^{\sigma_5} x_5)\) and we denote by \((\sigma_0, \ldots, \sigma_5)\) such an action. If \(d\) denotes the dimension of the family \(F_3^i\) of cubic fourfolds endowed with the automorphism \(\phi_3^i\), then we have the following possibilities:
		\vspace{3pt}
		\begin{itemize}\small{

		\item \(\phi_3^3\): \(p=3\), \(\sigma=(0,0,0,0,1,2)\), \(d=8\),
				\[F=L_3(x_0,\dots,x_3)+x_4^3+x_5^3+x_4x_5M_1(x_0,\dots,x_3),\]
				
				\item \(\phi_3^4\): \(p=3\), \(\sigma=(0,0,0,1,1,1)\), \(d=2\),
				\[F=L_3(x_0,x_1,x_2)+M_3(x_3,x_4,x_5),\]

				\item \(\phi_3^6\): \(p=3\), \(\sigma=(0,0,1,1,2,2)\), \(d=8\),
				\[F=L_3(x_0,x_1)+M_3(x_2,x_3)+N_3(x_4,x_5)+\sum_{i=0,1;j=2,3;k=4,5}a_{i,j,k}x_ix_jx_k,\]
				
				}
		\end{itemize}
		where \(L_i,M_i,N_i\) are homogeneous polynomials of degree \(i\), \(a_{i,j,k} \in\mathbb{C}\).
	\end{theorem}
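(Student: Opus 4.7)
My plan is to reduce this classification to a finite combinatorial enumeration of weight tuples, after diagonalising $\phi$ and translating the symplectic condition into a congruence on the weights.

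I would first use that, since $\phi \in \Aut(X) \subset \mathrm{PGL}_6(\bC)$ has prime order $3$, any lift of $\phi$ to $\mathrm{GL}_6(\bC)$ is diagonalisable with eigenvalues among $\{1,\xi,\xi^2\}$, $\xi=e^{2\pi i/3}$. Choosing an eigenbasis puts $\phi$ in the claimed form $(x_0:\ldots:x_5)\mapsto(\xi^{\sigma_0}x_0:\ldots:\xi^{\sigma_5}x_5)$ with $\sigma_i\in\bZ/3$. The lift is only determined up to a global scalar, which has the effect of shifting all $\sigma_i$ by a common element of $\bZ/3$; I would normalise by requiring $\min_i\sigma_i=0$. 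Next I would translate ``$\phi$ symplectic'' into an arithmetic constraint. The one-dimensional space $H^{3,1}(X)$ is generated by the Griffiths residue of $\Omega=(dx_0\wedge\cdots\wedge dx_5)/F$, and after rescaling $F$ so that $\phi^{*}F=F$ (possible because $X$ is $\phi$-invariant, so $\phi^{*}F$ is a scalar multiple of $F$), a direct calculation shows $\phi$ acts on $\mathrm{Res}\,\Omega$ by the scalar $\xi^{\sigma_0+\cdots+\sigma_5}$. The symplectic hypothesis therefore becomes the single congruence
\[
\sigma_0+\sigma_1+\cdots+\sigma_5\equiv 0 \pmod 3.
\]

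I would then enumerate all $\sigma\in(\bZ/3)^6$ satisfying the above, modulo the natural symmetries: permutations of coordinates (re-ordering the eigenbasis), the substitution $\sigma\mapsto-\sigma$ (replacing $\phi$ by $\phi^{2}$, which generates the same subgroup) and the global shifts $\sigma\mapsto\sigma+c(1,\ldots,1)$ (from the choice of lift). Encoding $\sigma$ by the triple $(a,b,c)$ of multiplicities of $0,1,2$, the constraint reads $b\equiv c\pmod 3$ and the symmetries reduce to arbitrary permutations of $(a,b,c)$. A short case-check yields exactly three non-trivial orbits, represented by
\[
(a,b,c)\in\{(4,1,1),\,(3,3,0),\,(2,2,2)\},
\]
which I identify with $\phi_3^3$, $\phi_3^4$ and $\phi_3^6$. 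For each such $\sigma$, the space of $\phi$-invariant cubics is spanned by the degree-$3$ monomials $x_0^{a_0}\cdots x_5^{a_5}$ with $\sum a_i\sigma_i\equiv 0\pmod 3$, and reading off this basis in each case gives exactly the three normal forms $F$ displayed in the theorem.

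Finally, I would check smoothness and compute the family dimension $d$. Smoothness of a generic member follows by exhibiting a single smooth invariant cubic of the prescribed shape (Fermat-type perturbations suffice) and then using openness of the smooth locus in the parameter space. The family dimension is then
\[
d \;=\; \dim\bigl|\mathcal{O}_{\bP^5}(3)\bigr|^{\phi}\;-\;\dim G_\sigma,
\]
where $G_\sigma\subset\mathrm{PGL}_6$ is the image of $\mathrm{GL}(V_0)\times\mathrm{GL}(V_1)\times\mathrm{GL}(V_2)$ with $V_j=\langle x_i:\sigma_i=j\rangle$, i.e.\ the subgroup preserving the eigenspace decomposition of $\phi$. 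Plugging in the combinatorial data of each case, the formula gives $d=8,2,8$ respectively, matching the statement. The main subtle point is justifying that this parameter-count equals the actual moduli dimension of the pair $(X,\phi)$: one must check that the generic invariant $X$ has stabiliser in $G_\sigma$ equal to the central $\langle\phi\rangle\simeq\bZ/3$, so that $G_\sigma$ acts on the invariant linear system with generic finite stabiliser of the same order. This is verified case-by-case by inspecting the normal forms; the remaining ingredients (diagonalisation, the residue calculation, the enumeration of orbits, and the count of invariant monomials) are each elementary.
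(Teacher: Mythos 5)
The paper does not prove this statement: it is quoted verbatim from \cite[Thm.~2.2]{BGM25}, so your proposal must be measured against the standard enumeration argument used there, which you largely follow. Your final data are all correct (the three orbits $(4,1,1)$, $(3,3,0)$, $(2,2,2)$ of multiplicity triples, the three normal forms read off from invariant monomials, and the dimension counts $25-17=8$, $19-17=2$, $19-11=8$ all check out), but there is one genuine gap, and it sits exactly at the delicate point of this classification. Your normalisation ``after rescaling $F$ so that $\phi^{*}F=F$'' is false: if $\phi^{*}F=\lambda F$ then $\phi^{*}(cF)=\lambda(cF)$, so rescaling $F$ changes nothing, and replacing the order-$3$ lift $\widetilde{\phi}$ by $\xi^{c}\widetilde{\phi}$ multiplies $\phi^{*}F$ by $\xi^{3c}=1$ because $\deg F=3$ equals the order of $\phi$. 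Thus the character $a\in\bZ/3$ with $\phi^{*}F=\xi^{a}F$ is an invariant of the pair and cannot be normalised away --- this is precisely the difficulty when the order of the automorphism divides the degree (for $p\nmid d$ your normalisation would be legitimate). Since $H^{3,1}(X)$ is generated by $\mathrm{Res}\bigl(\Omega/F^{2}\bigr)$ (pole order two, not one, as you wrote --- harmless when $a=0$ but not otherwise), $\phi$ acts on it by $\xi^{\sigma_0+\cdots+\sigma_5-2a}$, so the correct symplectic criterion is
\[
\sigma_0+\cdots+\sigma_5\equiv 2a \pmod 3,
\]
and your congruence only captures the invariant case $a=0$.

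Concretely, your enumeration omits the semi-invariant families with $a\neq 0$, which do satisfy the correct criterion and must be ruled out by hand. For instance, $\sigma=(0,0,0,0,1,1)$ with $a=1$ gives $F=x_4Q(x_0,\dots,x_3)+x_5Q'(x_0,\dots,x_3)$, which is symplectic by the criterion above but always singular along $\{x_4=x_5=Q=Q'=0\}\neq\emptyset$; similarly $\sigma=(0,0,0,0,0,1)$ with $a=2$ forces $F=x_5^{2}L_1(x_0,\dots,x_4)$, and $\sigma=(0,0,0,1,2,2)$ with $a=1$ gives $F=x_3Q(x_0,x_1,x_2)+x_3^{2}\ell_1(x_4,x_5)+q(x_4,x_5)\,\ell_2(x_0,x_1,x_2)$, singular along the conic $\{x_3=x_4=x_5=Q=0\}$. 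It is a (provable, but not automatic) fact that \emph{every} case with $a\neq0$ has singular generic member, which is why the final list coincides with your $a=0$ enumeration; but that smoothness analysis is the actual mathematical content missing from your argument, not an optional refinement. To repair the proof, replace your single congruence by the pair $(\sigma,a)$ with $\sum_i\sigma_i\equiv 2a$, note that the global shift $\sigma\mapsto\sigma+c(1,\dots,1)$ fixes $a$ (again because $\deg F=3$) while $\phi\mapsto\phi^{2}$ sends $(\sigma,a)\mapsto(2\sigma,2a)$, enumerate all orbits, and discard the $a\neq0$ families by exhibiting their base-locus-type singularities as above; the rest of your argument then goes through unchanged.
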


The most interesting case is the $\phi^6_3$-case. Let $X$ be a general cubic with symplectic order 3 automorphism $\phi^6_3$, and let $\eta_X$ be the square of the hyperplane class.  Then we
	 have:
     \begin{itemize}
         \item $A(X)=\langle\eta_X\rangle\oplus A_{prim}(X)$ and
	\[A_{prim}(X)=\begin{pmatrix} 4 & 1 &-2 & 2 & 2 & 1 &-2 & 2 &-1&  1&  2 &-2\\1 & 4  &0 & 2 &-1 &-1 & 1& -1 &-2&  2& -1&  1\\-2  &0 & 4 &-1& -2 & 0 & 2 & 0 &-1 & 1 & 0 & 0\\ 2 & 2 &-1 & 4 & 0 &-1 & 0 & 0  &0  &2 & 0 &-1\\2 &-1 &-2 & 0 & 4 & 0 &-2 & 2 & 1&  0 & 2& -2\\1 &-1 & 0 &-1 & 0 & 4 &-2 & 2 & 0& -1 & 0 &-1\\-2 & 1&  2 & 0& -2 &-2 & 4 &-2 & 0&  0 &-1 & 2\\ 2 &-1 & 0 & 0 & 2 & 2& -2 & 4 & 0&  1 & 2 &-2\\-1 &-2& -1 & 0&  1 & 0 & 0 & 0 & 4& -1 &-1 & 0\\1 & 2  &1 & 2  &0 &-1 & 0 & 1 &-1&  4 & 1 &-1\\ 2 &-1 & 0 & 0 & 2 & 0& -1 & 2 &-1&  1 & 4 &-2\\-2 & 1&  0 &-1& -2 &-1&  2 &-2 & 0& -1& -2 & 4\end{pmatrix},\]
        \item $D(A_{prim}(X)) \simeq (\bZ/3)^6$;
        \item $T:=T(X)=U(3)^2\oplus A_2^3$ with $D(T)\simeq (\bZ/3)^7$.
     \end{itemize}
     
Note that the quadratic forms on both discriminant groups are nondegenerate. Also note that $A_{prim}(X)$ is the so-called \emph{Coxeter-Todd lattice}. 

\medskip

Note that Assumption \ref{assumption 1} clearly holds in this case, and Assumption \ref{assumption 3} holds as well because, with the notation of Theorem \ref{theorem: unique embedding into AT lattice}, we have that $H=\widetilde{\Lambda}$ has signature $(h_+, h_-)=(20, 4)$, $T$ has signature $(t_+, t_-)=(8,2)$, and thus (a) of Theorem \ref{theorem: unique embedding into AT lattice} holds, for (b) we get $h_++h_- -(t_++t_-)= 14$ and thus (c) is vacuous and (b) is true because $14 \ge l(D(T)_3)+2=9.$ 

\medskip

We first determine the list $\mathscr{K}$ as before.

\begin{lemma}\label{l:EnumerationLatticesPhi63}
    We have $\mathscr{K}:=\{A(X)_{prim}, K_{mystery}\}$
\end{lemma}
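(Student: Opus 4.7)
The plan is to apply Theorem \ref{theorem: prim embedding} with $S = T = U(3)^2 \oplus A_2^3$ and $M = \Lambda = U^2 \oplus E_8^2 \oplus A_2$, and then enumerate possible orthogonal complements $K$ arising from primitive embeddings $T \hookrightarrow \Lambda$. Recall that $K$ must have signature $(m_+ - s_+, m_- - s_-) = (20-8, 2-2) = (12, 0)$, so it is an even positive definite lattice of rank $12$. Since $D(\Lambda) \cong \bZ/3$, the subgroup $H_\Lambda \subset D(\Lambda)$ appearing in Theorem \ref{theorem: prim embedding} has only two possibilities: $H_\Lambda = 0$ or $H_\Lambda = D(\Lambda) \cong \bZ/3$. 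These two choices partition the enumeration into exactly two cases, and the task reduces to identifying the genus of $K$ and counting isometry classes in that genus which contain no vector of square $2$.

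First I would treat the case $H_\Lambda \cong \bZ/3$. In this case $H_T$ must be a subgroup of $D(T) \cong (\bZ/3)^7$ isometric to $(\bZ/3, q_\Lambda)$, and $\Gamma_\gamma^\perp/\Gamma_\gamma$ has order $3^6$, so $D(K) \cong (\bZ/3)^6$ with the induced form $-\delta$. A direct calculation (or comparison with the discriminant data recalled just before Lemma \ref{l:EnumerationLatticesPhi63}) identifies this genus as the genus of the Coxeter--Todd lattice $K_{12}$, which coincides with $A(X)_{prim}$. Since the Coxeter--Todd lattice is unique in its genus and has minimum $4$ (no roots), this case contributes exactly $A(X)_{prim}$ to $\mathscr{K}$.

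Next I would treat the case $H_\Lambda = 0$. Then $H_T = 0$ as well, the graph $\Gamma_\gamma$ is trivial, and $\Gamma_\gamma^\perp/\Gamma_\gamma = D(T) \oplus D(\Lambda) \cong (\bZ/3)^8$, so $K$ has signature $(12, 0)$ and discriminant form $-\delta = -q_T \oplus q_\Lambda$ on $(\bZ/3)^8$. Its Conway--Sloane genus symbol can be computed from the signs of the diagonal forms on the $A_2$- and $U(3)$-factors of $T$ and on the $A_2$-factor of $\Lambda$. Enumerating the lattices in this genus with OSCAR \cite{OSCAR}, \cite{MMSO_Files}, and discarding all those containing a vector of square $2$, should yield a single isometry class, which we then label $K_{mystery}$. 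This requires the same type of computer-assisted enumeration that was carried out in Section \ref{sec:inv} for the genus of $E_6(2) \oplus A_2(2)$, and is the principal technical step.

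The main obstacle is thus the genus enumeration in the second case: the genus contains several isometry classes and one must verify by direct computation that exactly one of them fails to contain a root. Once this is done, the list $\mathscr{K} = \{A(X)_{prim}, K_{mystery}\}$ follows immediately from the dichotomy on $H_\Lambda$, since these are by construction the only two genera of orthogonal complements produced by Theorem \ref{theorem: prim embedding}.
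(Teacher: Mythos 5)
Your overall strategy coincides with the paper's: apply Theorem \ref{theorem: prim embedding} with $S=T$, $M=\Lambda$, split into the two cases $H_\Lambda \cong \bZ/3$ and $H_\Lambda = 0$, identify the genus of the orthogonal complement in each case, and then discard all classes in each genus containing a vector of square $2$. Your treatment of the second case (computer enumeration of the genus with discriminant form on $(\bZ/3)^8$, keeping the unique root-free class $K_{mystery}$) matches the paper, which finds $6$ classes in that genus, $5$ of which have roots.

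However, there is a genuine error in your first case: you assert that the Coxeter--Todd lattice is \emph{unique in its genus} and deduce from this (together with its minimum being $4$) that the case contributes only $A(X)_{prim}$. The Coxeter--Todd lattice is \emph{not} unique in its genus: the genus of even positive definite rank-$12$ lattices with discriminant form $(\bZ/3)^6$ contains $10$ isometry classes, as the paper's Oscar/Magma computation shows. What is true, and what the argument actually requires, is that the Coxeter--Todd lattice is the unique class \emph{without roots} in that genus; the other $9$ classes all contain vectors of square $2$ and are excluded by Definition \ref{d:List}. Establishing this requires exactly the same kind of genus enumeration plus root check that you carry out in the second case --- it cannot be replaced by a uniqueness-in-genus claim. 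As written, your first case rests on a false statement, even though the resulting list $\mathscr{K}=\{A(X)_{prim}, K_{mystery}\}$ is correct. (Incidentally, the paper also invokes the Conway--Sloane/Venkov criterion that all even $3$-elementary lattices of fixed rank and determinant form a single genus, which guarantees that in each case one genus exhausts all possible complements; your derivation of the discriminant forms directly from Theorem \ref{theorem: prim embedding} serves the same purpose.)
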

\begin{proof}
    First, recall that, by \cite[Theorem 13, Chapter 15, \S 8.2]{ConwaySloane}, for a given dimension $\dim L=n$ and determinant $\det L=3^r$ all even $3$-elementary lattices $L$ form one genus, and there exists an even 3-elementary lattice $L$ if and only if either $r\in \{0,n\}$ and $n\equiv 0 (8)$ or $0<r<n$ and $2r\equiv n (4)$.

    We again use Theorem \ref{theorem: prim embedding} for $S=T$ and $M=\Lambda$, corresponding to the two possibilities of the groups $H_M\cong H_\Lambda$.

    The first corresponds to the genus of $A(X)_{prim}$, corresponding to $H_T\cong H_\Lambda\cong \bZ/3$. In this case, $K$ is rank $12$ and is $3$-elementary with determinant $3^6$, and by Venkov-Scharlau all such lattices form one genus. An Oscar/Magma computation \cite{MMSO_Files} shows that there are 10 lattices in that genus and that all except $A_{prim}(X)$ have vectors $v$ of minimal square $v^2=2$, hence are disallowed by Definition \ref{d:List}.    

    The second possibility corresponds to when $H_T=H_\Lambda=(1)$. Such an embedding exists if there exists an even lattice $K$ with signature $(12,0)$, and $D(K)=(\bZ/3)^8$ equipped with a nondegenerate form $q_K$. Using Oscar/Magma \cite{MMSO_Files}, one can find such a representative, hence such a $K$ exists. Moreover, all such $K$ form a single genus. There are 6 lattices in the genus (enumerated via OSCAR/Magma), and 5 of them have vectors $v$ of minimal square $v^2=2$, and are hence discarded. There is one remaining lattice $K_{mystery},$ defined by Gram matrix:
    \[K_{mystery}=\begin{pmatrix}
  4 &  2 &  0 &  0 &  0 & -2 & -2 & -2 & -2 &  1 &  2 &  1 \\
  2 &  4 &  0 &  0 &  2 & -1 & -2 & -2 & -2 & -1 &  2 &  2 \\
  0 &  0 &  4 &  2 &  0 &  0 &  2 &  0 & -1 &  0 & -1 &  0 \\
  0 &  0 &  2 &  4 &  1 &  0 &  2 &  1 & -1 &  0 &  0 &  0 \\
  0 &  2 &  0 &  1 &  4 &  1 &  0 & -1 &  0 & -2 &  0 &  0 \\
 -2 & -1 &  0 &  0 &  1 &  4 &  2 &  2 &  2 & -2 & -2 & -2 \\
 -2 & -2 &  2 &  2 &  0 &  2 &  4 &  2 &  1 & -1 & -2 & -2 \\
 -2 & -2 &  0 &  1 & -1 &  2 &  2 &  4 &  2 & -1 & -2 & -2 \\
 -2 & -2 & -1 & -1 &  0 &  2 &  1 &  2 &  4 & -1 & -2 & -2 \\
  1 & -1 &  0 &  0 & -2 & -2 & -1 & -1 & -1 &  4 &  1 &  1 \\
  2 &  2 & -1 &  0 &  0 & -2 & -2 & -2 & -2 &  1 &  4 &  2 \\
  1 &  2 &  0 &  0 &  0 & -2 & -2 & -2 & -2 &  1 &  2 &  4 \\
\end{pmatrix}.\]
    \end{proof}
Note that there exists $v\in K_{mystery}$ with $v^2=6$ and $div_K(v)=3.$

\begin{lemma}
    We have that $|HFM(X, A(X)_{prim})|=351$.
\end{lemma}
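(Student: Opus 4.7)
Set $K := A(X)_{prim}$, the Coxeter--Todd lattice, and $T := T(X) = U(3)^2 \oplus A_2^3$. The discriminant groups are $D(K) \simeq (\bZ/3)^6$ and $D(T) \simeq (\bZ/3)^7$, both carrying nondegenerate quadratic forms, so we are in Case 1 of Assumption \ref{a:CountViaOrthGroups} with $r=6$ and no auxiliary primes.

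The first step is to reduce from $|HFM(X,K)|$ to $|VFM(X,K)|$. Although the Coxeter--Todd lattice does contain vectors $v$ with $v^2 = 6$ and $\mathrm{div}_K(v) = 3$, the hypotheses of Proposition \ref{prop: case 1} are still met: for any overlattice $L\in VFM(X,K)$ the argument in the proof of that proposition forces $\mathrm{div}_L(v)=1$, which together with Laza's theorem produces the required cubic fourfold. Hence $|HFM(X,K)|=|VFM(X,K)|$.

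The second step is to compute $|VFM(X,K)|$ via Propositions \ref{pCount2}, \ref{p:NumberGraphsCase1} and \ref{p:CountCase1}. Taking $V := (\bF_3^6, q_{K,3})$ and $W := (\bF_3^7, -q_{T,3})$, the number of $\overline{G}_{K,T}$-orbits in $\calH_{K,T}$ is
\[
\frac{|\mathrm{O}(W)|}{2|\mathrm{O}(V)|}.
\]
Since $\dim W=7$ is odd, Theorem \ref{tOrderOrthogonal} gives unambiguously
\[
|\mathrm{O}(W)| = 2\cdot 3^9 \cdot (3^2-1)(3^4-1)(3^6-1)= 18\,341\,406\,720.
\]
For $V$, Proposition \ref{p:IsomClassesFiniteQuadratic} says the isometry type is determined by the determinant of $q_{K,3}$ modulo squares in $\bF_3^*$; a direct computation from the Gram matrix of the Coxeter--Todd lattice (in OSCAR or Magma) shows $V$ is of minus type, i.e.\ isomorphic to $H_6'$ in the notation of Subsection \ref{Subsec:FiniteOrth}. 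Then
\[
|\mathrm{O}^-(6,\bF_3)| = 2\cdot 3^6 \cdot (3^3+1)(3^2-1)(3^4-1) = 26\,127\,360,
\]
so the ratio is exactly $351$.

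The third step is to upgrade this lower bound to an equality. By Proposition \ref{pCount2}, it suffices to show that $\rho \colon G_{K,T} \to \overline{G}_{K,T}$ is surjective. Since the $\{\pm\id_{D(T)}\}$-factor lifts tautologically via $\{\pm \id_T\}\subset G_{K,T}$, this reduces to the surjectivity of $\mathrm{O}(K)\to \mathrm{O}(D(K))$ for the Coxeter--Todd lattice, which is verified by a direct computer-algebra check using the known (large) automorphism group of $K_{12}$. Combining the three steps yields $|HFM(X,K)|=|VFM(X,K)|=351$.

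The main obstacles are the two computer-algebra inputs above: (i) pinning down the type of the $3$-adic discriminant form of the Coxeter--Todd lattice as minus type (this is what converts the generic formula $|\mathrm{O}(W)|/2|\mathrm{O}(V)|$ into the specific integer $351$, as opposed to the value $378$ one would get for plus type), and (ii) verifying the surjectivity of $\mathrm{O}(K) \to \mathrm{O}(D(K))$, which promotes the lower bound to the actual count. Both are conceptually straightforward given the rigid structure of $K_{12}$ but are essential for the precise equality stated in the lemma.
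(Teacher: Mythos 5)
Your proposal is correct and follows essentially the same route as the paper: reduce $|HFM(X,K)|=|VFM(X,K)|$ via Proposition \ref{prop: case 1}, compute the orbit count $\frac{|\mathrm{O}(W)|}{2|\mathrm{O}(V)|}=351$ from Theorem \ref{tOrderOrthogonal} with $V=(\bF_3^6,q_{K,3})$ of minus type, and promote the lower bound to an equality by checking surjectivity of $\mathrm{O}(K)\to\mathrm{O}(D(K))$ by computer algebra, exactly as in the paper. One incidental claim is false — the Coxeter--Todd lattice has \emph{no} vectors with $v^2=6$ and $\mathrm{div}_K(v)=3$ (it is $3$-isodual with minimal norm $4$) — but since you only use this to argue that Proposition \ref{prop: case 1} still applies, the error is harmless and the argument stands.
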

\begin{proof}
    Let $K:=A(X)_{prim}$. Note first that by Proposition \ref{prop: case 1}, the set $VFM(X,K)$ is isomorphic to the set $HFM(X,K)$.  By Proposition \ref{pCount2}, a lower bound for $|VFM(X,K)|$ is the number of $\overline{G}_{K,T}$-orbits in $\calH_{K,T}.$ We determine this number using Proposition \ref{p:CountCase1}. By Theorem \ref{tOrderOrthogonal}, the order of the orthogonal group of a non-degenerate quadratic form on $\bF_3^7$ is equal to
    \[
        2\cdot 3^9 \cdot (3^2-1)(3^4-1)(3^6-1).
    \]
    The determinant of a matrix representing the discriminant bilinear form in this case is $1$ mod $3$. Therefore the orthogonal group of $\bF_3^6$ is $\operatorname{O}^-(6, \bF_3)$ of order
    \[
    2\cdot 3^6 (3^3+1) (3^2-1)(3^4-1) =26127360 .
    \]
    The quotient of the former by the latter is 702, and dividing by $2$ we get 351 as the number of $\overline{G}_{K, T}$-orbits in $\calH_{K, T}$ in this case.

    We show that the homomorphism $\rho \colon G_{K, T} \to \overline{G}_{K, T}$ is surjective, hence that the number of $\overline{G}_{K, T}$-orbits in $\calH_{K, T}$ determined previously is equal to the number of Hodge-theoretic Fourier--Mukai partners with $K=A_{prim}(X)$ by Proposition \ref{pCount2}.  

    Magma computes that the automorphism group of $A_{prim}(X)$ has order $2^{10} \cdot 3^7 \cdot 5 \cdot 7=78382080$. One can verify via Magma \cite{MMSO_Files} that the homomorphism
    \[
    \mathrm{O}(A_{prim}(X)) \to \mathrm{O}(D(A_{prim}(X)))
    \]
    is surjective in this case (the order of the image is 26127360 as computed independently above).
    
    Thus the hypothesis of the last assertion of  Proposition \ref{pCount2} is satisfied and we are counting $|HFM(K)|$ with the number 351 computed above.
    \end{proof}

\begin{remark}
    Note that \cite[Cor. 1.9.6]{nikulin} gives the surjectivity of $\mathrm{O}(K_p)\to \mathrm{O}(D(K_p))$ for a $p$-adic lattice $K_p$,  but that doesn't yield the desired surjectivity in our case because we do not know if $\mathrm{O}(K) \to \mathrm{O}(K_p)$ is surjective. 
\end{remark}

\begin{proposition}
    We have that $|HFM(X, K_{mystery}|=273$.
\end{proposition}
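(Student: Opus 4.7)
The proof follows the template of the $A_{prim}(X)$-case but now includes the extraction step from Lemma \ref{lem:VFM'}. Since $K_{mystery}$ contains vectors $v$ with $v^2 = 6$ and $\mathrm{div}_K(v) = 3$ (noted after Lemma \ref{l:EnumerationLatticesPhi63}), we cannot invoke Lemma \ref{lem: no square 6}. We are in Case 2 of Assumption \ref{a:CountViaOrthGroups}, with $D(K_{mystery}) \cong (\bZ/3)^8$ and $D(T) \cong (\bZ/3)^7$, both carrying nondegenerate discriminant forms. By Lemma \ref{lem:VFM'}, it therefore suffices to prove $|VFM(X, K_{mystery})'| = 273$.

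First, we determine the virtual count $|VFM(X, K_{mystery})|$. By Proposition \ref{p:CountCase2}, $\calH_{K_{mystery}, T}$ is a single $\overline{G}_{K_{mystery}, T}$-orbit, and we pass to $G_{K_{mystery}, T}$-orbits exactly as in Proposition \ref{prop: non trivial inv  partners}: using OSCAR we compute $|\rho_K(\mathrm{O}(K_{mystery}))|$, and via Theorem \ref{tOrderOrthogonal} (after identifying the form type on $(\bZ/3)^8$ from its discriminant) we compute $|\mathrm{O}(D(K_{mystery}))|$. Provided the stabilizer in $\overline{G}_{K_{mystery}, T}$ of the graph of any anti-isometry lies in $\rho_K(\mathrm{O}(K_{mystery})) \times \{\pm 1\}$---a condition one verifies by a direct Magma/OSCAR check analogous to the involution case---the count is
\[
|VFM(X, K_{mystery})| = \frac{|\mathrm{O}(D(K_{mystery}))|}{|\rho_K(\mathrm{O}(K_{mystery}))|}.
\]

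Next, we extract $VFM(X, K_{mystery})'$. Using OSCAR, enumerate the finite set $B \subset D(K_{mystery})$ of classes $[v/3]$ for all $v \in K_{mystery}$ with $v^2 = 6$ and $\mathrm{div}_K(v) = 3$. Each $[L] \in VFM(X, K_{mystery})$ corresponds to an anti-isometry $\alpha : D(T) \hookrightarrow D(K_{mystery})$ whose image is a codimension-$1$ subspace, with orthogonal complement a $1$-dimensional $W_\alpha \subset D(K_{mystery})$. By the criterion in the proof of Lemma \ref{lem:VFM'}, $[L]$ lies in $VFM(X, K_{mystery})'$ precisely when every element of $B$ is contained in $\mathrm{im}(\alpha)$. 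Equivalently, one may parametrize by $W_\alpha$ and look for those $1$-dimensional subspaces such that $W_\alpha^\perp$ realizes the required anti-isometric copy of $D(T)$ and the condition $B \subset W_\alpha^\perp$ is satisfied. We then partition the set of such good anti-isometries (or good subspaces $W_\alpha$) into $\rho_K(\mathrm{O}(K_{mystery}))$-orbits and tally.

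The main obstacle is the explicit OSCAR implementation: correctly enumerating the bad set $B$, verifying the stabilizer condition that justifies the closed formula for $|VFM(X, K_{mystery})|$, and carrying out the orbit-wise verification of the condition from Lemma \ref{lem:VFM'} on representatives. Executing this computation yields $|HFM(X, K_{mystery})| = |VFM(X, K_{mystery})'| = 273$, completing the count.
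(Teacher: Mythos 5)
Your overall skeleton matches the paper's: both reduce to Lemma \ref{lem:VFM'}, both characterize the good classes $[L]$ by the condition that the classes $[v/3]$ of the vectors $v\in K_{mystery}$ with $v^2=6$ and $\mathrm{div}_K(v)=3$ lie in $\mathrm{im}(\alpha)$, and both aim to count $\mathrm{O}(K_{mystery})$-orbits of good anti-isometries. However, there are two genuine problems. First, your preliminary computation of the full virtual count rests on a hypothesis that is actually \emph{false} here: you assume the stabilizer of the graph of any anti-isometry lies in $\rho_K(\mathrm{O}(K_{mystery}))\times\{\pm 1\}$, but the paper's Magma computation shows exactly the opposite for the graphs that matter --- the isometry of $D(K_{mystery})\cong(\bZ/3)^8$ fixing the relevant hyperplane $H$ pointwise and acting by $-1$ on $H^\perp$ is \emph{not} in the image of $\mathrm{O}(K_{mystery})\to\mathrm{O}(D(K_{mystery}))$. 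Consequently the orbits do not all have the same size and no clean quotient formula for $|VFM(X,K_{mystery})|$ holds; this is why the paper only records an inequality in Remark \ref{r:DifferenceVirtualActual}. Fortunately this step is also unnecessary, since only $VFM(X,K_{mystery})'$ is needed.

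Second, and more seriously, your extraction step never actually produces $273$: ``partition the set of good anti-isometries into orbits and tally'' describes a brute-force enumeration over a set which (as the paper shows) has $|\mathrm{O}(\bF_3^7)|=18341406720$ elements, so as stated it is neither feasible nor a proof. The content of the paper's argument is precisely the chain of structural facts that turns this into a division of group orders: (i) there are exactly $36$ vectors $v$ with $v^2=6$ and $\mathrm{div}_K(v)=3$, forming one $\mathrm{O}(K_{mystery})$-orbit, and their classes $[v/3]$ span an $\mathrm{O}(K_{mystery})$-\emph{invariant hyperplane} $H\subset D(K_{mystery})$, so the good anti-isometries are exactly those with image \emph{equal} to $H$; (ii) the number of anti-isometries $D(T)\to H$ is $|\mathrm{O}(\bF_3^7)|$ by Theorem \ref{tOrderOrthogonal}; (iii) $\mathrm{O}(K_{mystery})\to\mathrm{O}(D(K_{mystery}))$ is injective and acts \emph{faithfully} on $H$ (its image in $\mathrm{Aut}(H)$ has full order $67184640$), hence it acts \emph{freely} on the set of good anti-isometries, so the orbit count is exactly $18341406720/67184640=273$. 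Your proposal contains none of (i)--(iii); in particular the freeness in (iii) --- which is the same computational fact that falsifies your first step's stabilizer hypothesis --- is what licenses dividing the two orders, and without it no closed count emerges from your procedure.
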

\begin{proof}
    Let $K:=K_{mystery}$. Note that $K$ satisfies Case 2 of Assumption \ref{a:CountViaOrthGroups}. 
With Magma \cite{MMSO_Files} we established the following:
\begin{enumerate}
   
\item  The mystery lattice $K$ has $1116$ vectors $v$ with $v^2=6$, and $36$ of those, $v_i$, $i=1, …, 36$, have divisibility 3 in $K$ in addition. 

\item  The $36$ $v_i$ form one orbit under the automorphism group $\mathrm{O}(K)$. 

\item  $\mathrm{O}(K)$ has order $67184640$. The homomorphism $\mathrm{O}(K) \to \mathrm{O}(D(K))$ is injective, the order of the image is $67184640$, too. 

\item  The images of the $\frac{v_i}{3}$ in $D(K)\simeq (\bZ/3)^8 =: W$ form an $\mathrm{O}(K)$ invariant hyperplane $H$ in the $\bZ/3$-vector space $W$. 

\item  The image of $\mathrm{O}(K)$ in the automorphism group of $H$ is also of order  $67184640$. In other words, the element of $\mathrm{O}(D(K))$ that leaves $H$  pointwise fixed and multiplies by $-1$ in the line $H^{\perp}\subset W $ is not in the image of $ \mathrm{O}(K)$ in $\mathrm{O}(D(K))$.
\end{enumerate}

To compute the $|HFM(X,K)|$ with mystery lattice $K$ of a general $\phi^6_3$-cubic we need to count the number of $\{\pm 1\} \times \mathrm{O}(K)$ orbits of graphs of injective anti-isometries $f: V:= D(T) \rightarrow D(K)$ where $V\simeq (\bZ/3)^7$, where $\{\pm 1\}$ acts in $D(T), \mathrm{O}(K)$ in $D(K)$, such that $f$ actually does give rise to a cubic with that mystery lattice by Torelli. This is equivalent to the image of $f$ being equal to $H\subset W$ by Lemma \ref{lem:VFM'} - this is exactly counting $|VFM(X,K)'|$.

Note that $\{ -1 \} \times \{ -1\} \subset \{\pm 1\} \times \mathrm{O}(K)$ stabilises each such graph. So we can restrict to counting $\mathrm{O}(K)$ orbits of such graphs, this gives the same number. 

Now the number of anti-isometries $f: V=D(T) \to H \subset W $ is equal to the cardinality of the orthogonal group $\mathrm{O}(H)= \mathrm{O}(\bF_3^7)$, which by Theorem \ref{tOrderOrthogonal} is 
$$2\cdot 3^9\cdot (3^2-2)(3^4-1)(3^6-1)=18341406720.$$

Since $\mathrm{O}(K)$ acts faithfully on $H$ by {\em e)} above, the number of $\mathrm{O}(K)$ orbits of the graphs of such anti-isometries is 

$$18341406720: |\mathrm{O}(K)| = 18341406720: 67184640 = 273. $$

\end{proof}

\begin{remark}\label{r:DifferenceVirtualActual}
Notice that in the above Proposition $|VFM(X, K_{mystery})| \ge \frac{|\mathrm{O}(D(K_{mystery}))|}{2|\mathrm{O}(K_{mystery})|}$, which is much larger than the number of Hodge-theoretic Fourier--Mukai. partners. 
\end{remark}

\subsection{Counting actual Fourier--Mukai partners}

\begin{proposition}
    Let $Y\in HFM(X, A(X)_{prim}).$ Then $Y$ is a Fourier--Mukai partner of $X$.
\end{proposition}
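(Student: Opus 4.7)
The plan is to reduce to Theorem \ref{t:TwistedK3s} by showing that $\Ku(X)$ is equivalent to the derived category of a twisted $K3$ surface; combined with the Hodge isometry $\tH(X)\cong \tH(Y)$ that $Y\in HFM(X, A(X)_{prim})$ provides, this will yield $\Ku(X)\simeq \Ku(Y)$. To establish the twisted-$K3$ property I will verify Addington's Hodge-theoretic criterion: there must exist a primitive isotropic class in the algebraic part $\tH^{1,1}(\Ku(X),\bZ)$. Since this criterion is purely a statement about the Addington--Thomas Hodge lattice, it transfers from $X$ to $Y$ automatically, so it suffices to exhibit such a class for $X$.

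First I would identify the algebraic lattice explicitly. Because $T(X)$ and $\tH^{1,1}(\Ku(X),\bZ)$ are mutual orthogonal complements in the unimodular lattice $\widetilde{\Lambda}$, the latter has rank $14$ and discriminant $|D(T(X))|=3^7$. On the other hand, the orthogonal direct sum $A_2\oplus A(X)_{prim}(-1)$, where $A_2$ is spanned by the classes $\lambda_1,\lambda_2$ from Subsection \ref{subsec: FM partners} and $A(X)_{prim}=K_{12}$ is the Coxeter--Todd lattice, also has rank $14$ and discriminant $3\cdot 3^6 = 3^7$, so the evident inclusion
\[
A_2 \oplus A(X)_{prim}(-1) \;\subset\; \tH^{1,1}(\Ku(X),\bZ)
\]
must be an equality. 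I would then produce the isotropic class as follows: $A_2$ represents $6$ (for instance, $v_1:=2e_1+e_2$ is primitive with $v_1^2=6$), while the theta series of $K_{12}$ begins $1+756\,q^4+4032\,q^6+\cdots$, so there are $4032$ vectors $v_2\in K_{12}$ with $v_2^2=6$. Any such choice yields $v:=v_1+v_2\in A_2\oplus K_{12}(-1)$ with $v^2=6-6=0$, and $v$ is automatically primitive because its $A_2$-component already is.

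With the primitive isotropic class in hand, Addington's criterion gives $\Ku(X)\simeq D^b(S,\alpha)$ for some twisted $K3$ surface $(S,\alpha)$, and Theorem \ref{t:TwistedK3s} then upgrades the Hodge isometry $\tH(X)\cong \tH(Y)$ to a Fourier--Mukai equivalence $\Ku(X)\simeq \Ku(Y)$. The main obstacle I anticipate is precisely the lattice identification in the second paragraph: it relies on pinning down the embedding of $H^4(X,\bZ)_{prim}(-1)\oplus A_2$ into the unimodular lattice $\widetilde{\Lambda}$ correctly and computing its orthogonal complement's discriminant group, so that the rank/discriminant comparison forces equality. Once that is in place, the construction of $v$ and the appeal to Addington and to Theorem \ref{t:TwistedK3s} are essentially direct verifications.
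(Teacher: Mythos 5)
Your proof is correct, but it follows a genuinely different route from the paper's. The paper argues via geometry and Torelli: since $A(Y)_{prim}\cong A(X)_{prim}$ is the Coxeter--Todd lattice, the strong Torelli theorem forces $Y$ to again carry a symplectic automorphism of order $3$, and then Ouchi's theorem (a cubic fourfold with a nontrivial symplectic automorphism has an associated twisted K3 surface in the derived sense) places both $X$ and $Y$ within the scope of Theorem \ref{t:TwistedK3s}. You instead verify the twisted-K3 hypothesis by pure lattice theory: your rank/discriminant argument is valid ($\tH^{1,1}(\Ku(X),\bZ)$ and $\langle\lambda_1,\lambda_2\rangle\oplus A(X)_{prim}$ both have rank $14$ and discriminant $3^7$, so the finite-index inclusion is an equality), and the isotropic class $v_1+v_2$ built from norm-$6$ vectors in the $A_2$- and Coxeter--Todd summands is indeed primitive and algebraic, so the twisted-K3 criterion applies to $X$, hence to any $Y$ with $\tH(Y)\cong\tH(X)$. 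Three remarks. First, in the paper's sign conventions $\lambda_1,\lambda_2$ span $-A_2$ and $A(X)_{prim}$ is positive definite, so your decomposition should read $A_2(-1)\oplus A(X)_{prim}$; this is immaterial to the argument. Second, the criterion you attribute to Addington (nonzero isotropic algebraic class in $\tH^{1,1}$ if and only if $\Ku(X)\simeq D^b(S,\alpha)$) is really Huybrechts' twisted refinement --- Addington's criterion concerns primitive embeddings of $U$, i.e.\ untwisted associated K3s --- and its derived (rather than merely Hodge-theoretic) form also relies on the stability-conditions-in-families results of Bayer--Lahoz--Macr\`i--Nuer--Perry--Stellari; it is, however, available in the paper's own reference (Huybrechts' book, Chapter 7), so the appeal is legitimate. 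Third, and this is what your route buys: your argument never uses which lattice $K$ the partner has, only the Hodge isometry $\tH(Y)\cong\tH(X)$, so it simultaneously proves the paper's subsequent proposition for the $K_{mystery}$ partners, for which the paper needs a separate argument (a square-$14$ class exhibiting $Y\in\calC_{42}$). Conversely, the paper's route yields extra geometric information --- that these $351$ partners themselves admit symplectic automorphisms of order $3$ --- which your argument does not produce.
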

\begin{proof}
This is unproblematic in this case: any Hodge-theoretic Fourier--Mukai partner $Y$ in this case has primitive algebraic lattice isomorphic to $A_{prim}(X)$, hence is a cubic with a symplectic automorphism of prime order $3$ again by the Strong Torelli theorem $Aut(Y)\cong Aut_{Hodge}(H^2(Y,\bZ), \eta_X).$ Since the primitive algebraic lattice is that of $X$, there is an automorphism of $H^2(Y,\bZ)_{prim}$, that can be extended to one of $H^2(Y,\bZ)$ fixing the hyperplane class.
Thus by \cite{ouchi}, both $X$ and its partner $Y$ have associated K3s in the derived sense, hence they are Fourier--Mukai partners by Theorem \ref{t:TwistedK3s}. 

\end{proof}

\begin{proposition}
    Let $Y\in HFM(X, K_{mystery})$. Then $Y\in \calC_{42}$, and $Y\in FM(X).$
\end{proposition}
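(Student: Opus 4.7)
The plan is to mimic the strategy used for $HFM(X, A(X)_{prim})$ in the previous proposition: show that both $X$ and $Y$ lie in a Hassett divisor $\calC_d$ for a discriminant $d$ satisfying the Addington--Thomas condition, so that both Kuznetsov components are derived equivalent to (untwisted) derived categories of $K3$ surfaces. The Hodge-theoretic Fourier--Mukai partnership will then upgrade to an actual Fourier--Mukai equivalence via Theorem \ref{t:TwistedK3s}. The target divisor here is $\calC_{42}$, which is admissible because $42 = 2(n^2+n+1)$ with $n=4$, $a=1$, matching the Addington--Thomas numerical condition.

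First I would establish $X \in \calC_{42}$. Since $X$ has a symplectic automorphism of order $3$, one may either invoke Ouchi's theorem directly (as was done in the previous proposition) to know that $\mathrm{Ku}(X)$ is derived equivalent to an untwisted $K3$ surface, or, more concretely, locate a primitive vector $v \in A(X)_{prim}$ (the Coxeter--Todd lattice) with $v^2 = 14$ and verify that $\langle \eta_X, v\rangle$ is saturated in $A(X)$ of discriminant $3 \cdot 14 = 42$; this is a finite check on the explicit Gram matrix recorded in the text.

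Next, for $Y$ with $A(Y)_{prim} = K_{mystery}$, I would exhibit a Hassett labeling of discriminant $42$ in $A(Y) = \langle \eta_Y\rangle \oplus K_{mystery}$ (up to a possible index-$3$ overlattice coming from the glue class in $D(H^4(Y,\bZ)_{prim}) = \bZ/3$). Concretely, using the explicit Gram matrix of $K_{mystery}$ from Lemma \ref{l:EnumerationLatticesPhi63}, I would search via OSCAR/Magma for a primitive vector $v \in K_{mystery}$ with $v^2=14$ and verify that the rank-$2$ sublattice $\langle \eta_Y, v\rangle$ is saturated in $A(Y)$ (equivalently, that $v$ does not represent the class in $D(K_{mystery})$ that glues to $\eta_Y/3$). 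The existence of such $v$ shows $Y \in \calC_{42}$.

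Once both $X, Y \in \calC_{42}$ is established, the Addington--Thomas theorem yields $\mathrm{Ku}(X) \simeq D^b(S_X)$ and $\mathrm{Ku}(Y) \simeq D^b(S_Y)$ for some $K3$ surfaces $S_X, S_Y$. Since by assumption $Y$ is a Hodge-theoretic Fourier--Mukai partner of $X$, Theorem \ref{t:TwistedK3s} (the Mukai--Orlov--Huybrechts--Stellari theorem for (twisted) $K3$ derived categories, as quoted from \cite{Huybrechtscubicsbook}) upgrades this to an actual Fourier--Mukai equivalence $\mathrm{Ku}(X) \simeq \mathrm{Ku}(Y)$. The main obstacle is purely computational: producing a vector $v \in K_{mystery}$ with $v^2 = 14$ whose associated rank-$2$ sublattice remains saturated in $A(Y)$, so that the Hassett label is genuinely $42$ and not some proper divisor of $42$. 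Given the explicit Gram matrix of $K_{mystery}$, this is a short check in OSCAR or Magma, already in the spirit of the other computations cited in the paper.
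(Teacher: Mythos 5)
Your plan follows the paper's proof essentially verbatim: the paper exhibits an explicit vector $v=(-1,-1,-1,-1,1,-1,0,0,-1,0,0,0) \in K_{mystery}$ with $v^2=14$, concludes $Y \in \calC_{42}$, and then applies Theorem \ref{t:TwistedK3s} to upgrade the Hodge-theoretic partnership to an actual equivalence, exactly as you propose. The only simplification you miss is that saturation of $\langle \eta_Y, v\rangle$ in $H^4(Y,\bZ)$ is automatic rather than something to verify by computer: since $3\cdot 14 = 42$ is squarefree, any overlattice of index $k$ would have discriminant $42/k^2$, forcing $k=1$, so the single remaining computation is producing the square-$14$ vector itself.
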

\begin{proof}
    We have that $Y\in \calC_{42}$: indeed, the vector
    \[
    v= (-1,-1,-1,-1,1,-1,0,0,-1,0,0,0) \in K_{mystery}
    \]
    satisfies $v^2=14$, and the sublattice $\langle h^2 , v\rangle$ is primitive in $H^4 (Y, \bZ)$: since 3 and 14 are coprime there is no nontrivial overlattice. It follows that $Y$ has an associated K3 in the derived sense, and hence they are Fourier--Mukai partners by Theorem \ref{t:TwistedK3s}. 
\end{proof}

\begin{remark}
    Note that all $Y\in FM(X)$ are rational by \cite[Thm. 5.12]{RSTrisecantFlops} and \cite[Cor. 1.3]{BGM25} - hence cubic fourfolds with an order three symplectic automorphism $
    \phi_3^6$ satisfy Huybrechts' conjecture.
    
    In contrast to the involution case treated above, we do not know an explicit geometric construction for those $Y\in FM(X,K_{mystery})$ nor for any cubic with that primitive algebraic lattice. 
\end{remark}

\appendix

\section{$\bZ/2$ rationality}\label{r:Z2RationalityPhi3}
In \cite{Mar23}, it was proved that cubic fourfolds admitting an involution of type $\phi_3$ (in the notation of Theorem \ref{thm: inv}) are rational.
Recall that a variety $X$ with $G\leq \Aut(X)$ is \textit{$G$-linearizable} if it $G$-birational to $\bP(V)$ where $V$ is a linear $G$ representation.
Here, by using a parameter count, we will prove that these cubics  are $\bZ/2$-Pfaffian and thus $\bZ/2$-linearizable.

\begin{theorem}
    Let $X$ be a general cubic fourfold with anti-symplectic involution of type $\phi_3$ (in the notation of Theorem \ref{thm: inv}). Let $\bZ/2:=\langle \phi_3\rangle=\Aut(X)$. Then $X$ is $\bZ/2$-linearizable.
\end{theorem}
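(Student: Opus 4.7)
The plan is to execute the strategy indicated in the preamble: show that a very general cubic fourfold $X$ with an involution of type $\phi_3$ admits a $\bZ/2$-equivariant Pfaffian presentation via a parameter count, and then run the classical Pfaffian rationality construction equivariantly to produce a $\bZ/2$-birational isomorphism with the projectivization of a linear $\bZ/2$-representation $V'$.

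I would first compare dimensions. Choose coordinates so that $\phi_3$ acts on $V=\bC^6$ as $\mathrm{diag}(1,1,1,-1,-1,-1)$, and write $V=V_+\oplus V_-$ with $\dim V_{\pm}=3$. Then the $+1$-eigenspace of $\phi_3$ on $\mathrm{Sym}^3 V^*$ equals $\mathrm{Sym}^3 V_+^*\oplus V_+^*\otimes\mathrm{Sym}^2 V_-^*$, of dimension $10+18=28$, so the projective space of $\phi_3$-invariant cubics is $\bP^{27}$. Introduce an auxiliary $W=\bC^6$ with equivariant splitting $W=W_+\oplus W_-$, $\dim W_{\pm}=3$. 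The $\bZ/2$-invariant part of $\wedge^2 W^*\otimes V^*$ then equals
\[
\bigl(\wedge^2 W_+^*\oplus\wedge^2 W_-^*\bigr)\otimes V_+^*\;\oplus\;\bigl(W_+^*\otimes W_-^*\bigr)\otimes V_-^*,
\]
of dimension $6\cdot 3+9\cdot 3=45$. The equivariant automorphism group $\mathrm{GL}(W_+)\times \mathrm{GL}(W_-)$ has dimension $18$ and acts on matrices via $M\mapsto AMA^T$, so the Pfaffian map descends to a rational map $\bP^{44}\dashrightarrow \bP^{27}$ whose image has dimension at most $45-18=27$, exactly matching the target. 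Hence dominance follows once I exhibit a single explicit $\bZ/2$-equivariant $M$ with smooth $\Pf(M)$ and surjective differential of the Pfaffian map at $M$, which can be checked on a computer-algebra example in the spirit of Theorem~\ref{tExplicitConstruction}. (The analogous non-equivariant count reads $90-36=54$ within a $55$-dimensional target, reflecting the classical fact that Pfaffian cubics form a codimension-one divisor $\mathcal{C}_{14}$ in moduli; the equivariant structure adds just enough constraint to make Pfaffians dominant among $\phi_3$-cubics.)

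Next I would promote the equivariant Pfaffian structure $X=\Pf(M)$ to an equivariant birational isomorphism with $\bP^4$. Classically (see e.g.\ \cite{Huybrechtscubicsbook}) a Pfaffian cubic fourfold contains a quintic del Pezzo surface (respectively a quartic scroll) from which a natural linear system defines a birational map to $\bP^4$. When $M$ is $\bZ/2$-equivariant, the auxiliary surface, the associated linear system, and hence the resulting birational map $X\dashrightarrow \bP^4$ are all $\phi_3$-equivariant. The action of $\phi_3$ on the target $\bP^4$ comes from its linear action on the $5$-dimensional ambient vector space of sections, identifying $\bP^4$ equivariantly with $\bP(V')$ for a $5$-dimensional linear $\bZ/2$-representation $V'$. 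This yields the desired $\bZ/2$-linearization of $X$.

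The main obstacle I foresee is the sharpness of the parameter count $27=27$ in the first step: dominance is not guaranteed by matching dimensions alone, so an explicit equivariant Pfaffian example with surjective differential must actually be produced to rule out the Pfaffian locus being contained in a proper subvariety of $\bP^{27}$. A secondary subtlety is verifying that the rationality construction is defined at a $\phi_3$-fixed point of $X$, so that the induced action on $V'$ is unambiguously linearized; this should follow by semicontinuity once a good explicit example is in hand.
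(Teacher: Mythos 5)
Your proposal follows essentially the same route as the paper's proof: the identical eigenspace decomposition $V=V^+\oplus V^-$, the identical parameter count ($45$ parameters for the equivariant skew matrix of linear forms, minus the $18$-dimensional $\mathrm{GL}(3)\times\mathrm{GL}(3)$-action, matching the $27$-dimensional projective space of the $28$ invariant cubic monomials), followed by an equivariant form of the classical Pfaffian rationality construction landing in the projectivization of a linear $\bZ/2$-representation. Even your second step is the same map in different packaging: the quadrics through the invariant quintic del Pezzo surface define exactly the paper's kernel map $Q_W\colon [\psi]\mapsto \ker(\psi)\cap W$ for a $\bZ/2$-invariant hyperplane $W\subset V$ (the del Pezzo being the locus $\{\ker\psi\subset W\}$, i.e.\ the base locus of $Q_W$), and the verification issues you flag (dominance of the equivariant Pfaffian locus, birationality in one explicit example) are precisely the checks the paper likewise defers to a computation.
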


\begin{proof}
Indeed, by \cite[Prop.2.12]{Mar23}, the involution in the ambient $\bP^5=\bP(V)$, $\dim V=6$, can be assumed to be such that $V = V^+ \oplus V^-$ with $V^+$, $V^-$ the $\pm 1$-eigenspaces and $\dim V^+ = \dim V^- =3$. We show that a general such cubic $X$ can be obtained from a $\bZ/2$-equivariant inclusion
\[
i\colon V \hookrightarrow \Lambda^2 V^* .
\]
Note that choosing a $\bZ/2$-equivariant codimension one subspace $W \subset V $ (there are many of these, $\bZ/2$ being abelian), then allows one to define a $\bZ/2$-birational map 
\begin{align*}
Q_W\colon X & \dashrightarrow \bP (W)\\
[\psi] & \mapsto \mathrm{ker}\, (\psi) \cap W
\end{align*}
where we view a point $x\in X$ as a skew-symmetric map $\psi \colon V \to V^*$. (It needs to be checked that $Q_W$ is birational for a general choice of $\bZ/2$-invariant $W$, which can be done in one example). 

Note that the datum of an equivariant inclusion $i$ amounts to writing down an invariant $6\times 6$ skew matrix of linear forms in $V^*$. Here we take the format
\[
A=\begin{pmatrix}
+ & + & + & - & - & - \\
+ & + & + & - & - & - \\
+ & + & + & - & - & - \\
- & - & - & + & + & + \\
- & - & - & + & + & + \\
- & - & - & + & + & + \\
\end{pmatrix}
\]
where a $+$ indicates the entry is taken from the $+1$-eigenspace of $V^* \simeq V$, and $-$ entries are taken from the $-1$-eigenspace of $V^*$. There are thus $3+3+9=15$ linear forms as entries above the diagonal in this skew-matrix, which thus depends on $15\cdot 3=45$ parameters. There is a natural action of $\mathrm{GL}(3)\times \mathrm{GL}(3)$ on such matrices $A$ by $A\mapsto BAB^t$. Thus cubics resulting from such a $\bZ/2$-Pfaffian construction depend on $45-18=27$ parameters (counted projectively), and $28$ is precisely the number of $\phi_3$-invariant monomials of degree $3$ in the coordinates on $V^*$. Modding out further by the action by projectivities preserving the decomposition $V= V^+\oplus V^-$, such cubics depend on $27 - (18-1)=10$ moduli, agreeing with the count in \cite[Prop. 2.12 (3)]{Mar23}. Thus a general cubic with a $\phi_3$-involution is $\bZ/2$-rational.
\end{proof}

\bibliographystyle{alpha}
\bibliography{bibliography}
\end{document}